\newtheorem{theorem}{Theorem}
\numberwithin{theorem}{section} % important bit
\newtheorem{lemma}[theorem]{Lemma}
\newtheorem{corollary}[theorem]{Corollary}
\newtheorem{question}[theorem]{Question}
\newtheorem{conjecture}[theorem]{Conjecture}
\theoremstyle{remark}
\newtheorem{remark}[theorem]{Remark}
\newtheorem{observation}[theorem]{Observation}
\theoremstyle{definition}
\newtheorem{definition}[theorem]{Definition}
\newcommand\ab[1]{\lvert #1\rvert}
\title{Ramsey numbers of digraphs with local edge structure}
\author{Domagoj Bradač\thanks{Department of Mathematics, EPFL, Lausanne, Switzerland. Email: \texttt{domagoj.bradac@epfl.ch}. Research supported in part by SNSF grant 200021-228014. } \and Patryk Morawski\thanks{Department of Mathematics, ETH Z\"urich, Z\"urich, Switzerland.
Email: \texttt{patryk.morawski@ifor.math. ethz.ch}. Research supported in part by SNSF grant 200021-228014.} \and Benny Sudakov\thanks{Department of Mathematics, ETH Z\"urich, Z\"urich, Switzerland. Email: \texttt{benjamin.sudakov@math.ethz.ch}. Research supported in part by SNSF grant 200021-228014.} \and Yuval Wigderson\thanks{Institute for Theoretical Studies, ETH Z\"urich, Z\"urich, Switzerland. 
    Email: {\texttt{yuval.wigderson@eth-its. ethz.ch}}. Research supported by Dr.\ Max R\"{o}ssler, the Walter Haefner Foundation, and the ETH Z\"{u}rich Foundation.}}
\date{}
\begin{document}

\maketitle
\begin{abstract}
One of the classical topics in graph Ramsey theory is the study of which $n$-vertex graphs have Ramsey numbers that are linear in $n$.
In this paper, we consider this problem in the context of directed graphs. The oriented Ramsey number of a digraph $G$ is the smallest integer $N$ such that every $N$-vertex tournament contains a copy of $G$. We prove that every bounded-degree acyclic digraph with a ``local edge structure'' has a linear oriented Ramsey number.

More precisely, we say that a digraph $G$ has graded bandwidth $w$ if its vertices can be partitioned into sets $V_1, \dots, V_H$ such that all edges $uv \in E(G)$ with $u \in V_i$ and $v \in V_j$ satisfy
$1 \leq j - i \leq w$. 
We prove that $\vv{r}(G) \leq 3^{57\Delta w} |V(G)|$ for any acyclic $G$ with graded bandwidth $w$ and maximum degree $\Delta$. 

This provides a common generalization of several prior results, including on digraphs of bounded height, of digraphs of bounded bandwidth, and blowups of bounded-degree oriented trees. This notion also captures a wide variety of natural digraphs, such as oriented grids and hypercubes.
\end{abstract}

\section{Introduction}
\subsection{Background}
The Ramsey number $r(G)$ of a graph $G$ is the minimum number $N$ such that every 2-coloring of the edges of the complete graph $K_N$ on $N$ vertices contains a monochromatic copy of $G$.
Studying how $r(G)$ depends on $G$ is the main question in graph Ramsey theory, and is one of the most studied topics in combinatorics. In particular, one would like to obtain comparable upper and lower bounds on $r(G)$ in terms of basic parameters of $G$.
Such bounds are known in certain regimes; for example, it is known that if $G$ has $n$ vertices, then $r(G)$ grows exponentially in $n$ if and only if $G$ has $\Omega(n^2)$ edges \cite{MR3065018,MR2782204}.

In particular, sparse graphs---those with $o(n^2)$ edges---have subexponential Ramsey numbers. However, it was observed very early in the history of graph Ramsey theory (see e.g.\ \cite{MR373959}) that certain sparse graphs, such as trees and cycles, have \emph{much} smaller Ramsey numbers, namely \emph{linear} in their order.
In 1975, Burr and Erd\H{o}s \cite{MR371701} conjectured that this is in fact true for all sparse graphs.
Their notion of sparsity is the degeneracy\footnote{In fact, they originally stated their conjecture in terms of the \emph{arboricity} of $G$, but it is easy to verify that a graph has bounded degeneracy if and only if it has bounded arboricity. Most subsequent papers on this topic use degeneracy, which seems more well-suited for Ramsey-theoretic study.} of the graph, where a graph $G$ is called $d$-degenerate if every subgraph of $G$ has a vertex of degree at most $d$, and the degeneracy of $G$ is the least $d$ for which it is $d$-degenerate.
The Burr--Erd\H{o}s conjecture then states that for every $d$-degenerate graph $G$ we have that $r(G) \leq C_d |V(G)|$, where $C_d$ is a constant that depends only on $d$.
The first major step towards proving this conjecture was made by Chv\'{a}tal, R\"odl, Szemer\'{e}di and Trotter \cite{MR714447}, who proved that that for every graph $G$ with maximum degree $\Delta$, we have $r(G) \leq C_\Delta |V(G)|$, that is, that the conjecture holds under the stronger assumption of bounded maximum degree.
After a sequence of further partial results (e.g.\ \cite{MR2548655,MR2037075,MR1834850,MR1198403,MR1788033,1501.05350}), the full Burr--Erd\H{o}s conjecture was finally resolved by Lee \cite{MR3664811} in 2017.

For bounded-degree graphs, we have a fairly precise understanding of how large their Ramsey numbers are. Substantially improving on the early result of Chvat\'al, R\"odl, Szemer\'edi, and Trotter \cite{MR714447}, Graham, R\"odl, and Ruci\'nski \cite{MR1788033} proved that every $n$-vertex graph $G$ with maximum degree $\Delta$ satisfies $r(G) \leq C^{\Delta (\log \Delta)^2}n$, where $C$ is an absolute constant. In a subsequent paper \cite{MR1832445}, they noted that their technique yields a better bound of $r(G) \leq C^{\Delta \log \Delta}$ in case $G$ is bipartite, and also showed that this bound is close to best possible, in that there exist bipartite $n$-vertex graphs maximum degree $\Delta$ satisfying $r(G) \geq c^{\Delta} n$, for another absolute constant $c>1$. Subsequent work by Conlon \cite{MR2550376}, Fox--Sudakov \cite{MR2520279}, and Conlon--Fox--Sudakov \cite{MR3004807} removed one logarithmic factor from both upper bounds; in particular, it is now known that every $n$-vertex bipartite graph with maximum degree $\Delta$ satisfies $r(G) \leq C^{\Delta}n$, which is best possible up to the value of $C$.

We now turn our attention to directed graphs (\emph{digraphs} for short), where similar questions can be asked, and which are the main focus of this paper. A digraph $G$ is \emph{acyclic} if it contains no directed cycles. 
For an acyclic $G$, we define the \emph{oriented Ramsey number} $\vv{r}(G)$ as the minimum number $N$ such that every tournament on $N$ vertices, that is, every edge-orientation of the complete graph $K_N$, contains a copy of $G$.
The study of oriented Ramsey numbers was initiated in 1951 by Stearns \cite{MR109087}, who showed that for a transitive tournament $\vv{T_n}$ on $n$ vertices we have $\vv{r}(\vv{T_n}) \leq 2^{n-1}$, which was complemented by a lower bound of $\vv{r}(\vv{T_n}) \geq 2^{\frac{n}{2} - 1}$ by Erd\H{o}s and Moser \cite{MR168494} in 1964.
As in the undirected setting, for sparser digraphs $G$ this number is in general much smaller than exponential in $|V(G)|$; for example, for $n > 8$ every orientation of the $n$-vertex path has oriented Ramsey number equal to $n$ \cite{MR837805,MR1750898}.
For more general oriented trees, Sumner conjectured that $\vv{r}(T) \leq 2n - 2$ for any oriented tree $T$ on $n$ vertices.
Sumner's conjecture has attracted a great deal of interest over the years (e.g.\ \cite{MR837805,MR4278113,MR2078502,MR1136161,MR1874730,MR1791347,MR4392271,MR4809293,MR2832810}) and in 2011 it was proved for sufficiently large $n$ by K\"uhn, Mycroft, and Osthus \cite{MR2793448}.

Motivated by these results, Buci\'c, Letzter and Sudakov \cite{MR3980089} asked whether a natural anologue of the Burr--Erd\H{o}s conjecture holds for acyclic digraphs, that is, whether for all acyclic digraphs $G$ with maximum degree\footnote{By the maximum degree of a digraph, we mean the maximum degree of the underlying undirected graph.} $\Delta$ we have that $\vv{r}(G) \leq c_\Delta |V(G)|$ for some constant $c_\Delta$ depending only on $\Delta$.
Quite surprisingly, Fox, He, and Wigderson \cite{MR4819947} recently answered this question in the negative by showing that for any $\Delta$ and large enough $n$ there exists an $n$-vertex digraph $G$ with maximum degree $\Delta$ and $\vv{r}(G) \geq n^{\Omega(\Delta^{2/3} / \log^{5/3} \Delta)}$. 
In the other direction, they proved an upper bound of $\vv{r}(G) \leq n^{C_\Delta \log n}$ for any acyclic digraph $G$ with maximum degree $\Delta$. However, their results leave open the question of whether the worst case behavior for fixed $\Delta$ is always polynomial, or whether it can indeed be super-polynomial.

However, the motivation of Buci\'c--Letzter--Sudakov \cite{MR3980089} is sensible, so a very natural question now arises: why is it that many examples of bounded-degree digraphs \emph{do} have linear Ramsey number, now that we know that some bounded-degree digraphs do not? While we are very far from having a complete explanation, it appears that the answer to this question is controlled by whether $G$ has a ``simple'' structure. For example, in addition to the results for trees and cycles discussed above, Aboulker et al.\ \cite{2410.23566} recently proved that constant-sized \emph{blowups}\footnote{A \emph{blowup} of a digraph $G$ is obtained by replacing each vertex by an independent set, and each oriented edge by a complete bipartite graph all of whose edges are oriented the same way.} of oriented trees have linear Ramsey number, and another result of Fox--He--Wigderson \cite{MR4819947} is that bounded-degree digraphs of bounded \emph{height}\footnote{The \emph{height} of a digraph $G$ is the length of its longest directed path. Equivalently, this is the least $h$ such that there is a partition $V(G)=V_1 \cup \dotsb \cup V_h$ with the property that all edges are directed from $V_i$ to $V_j$ for some $i<j$.} have linear Ramsey number. In both instances, the structural assumption on $G$ is crucial to the proof, as it demonstrates that the edges of $G$ cannot be arbitrarily badly distributed. At the other extreme, the lower bound construction of Fox--He--Wigderson \cite{MR4819947} is what they term an \emph{interval mesh}; loosely speaking, this is a digraph whose edges are uniformly spread out at all scales, and which in particular has no local structure.
\subsection{Graded digraphs}

Another important class of digraphs studied by Fox--He--Wigderson \cite{MR4819947} is the class of \emph{graded} digraphs, which we now define.
\begin{definition}
    We say that a digraph $G$ is \emph{graded} with a \emph{graded partition} $V(G) = V_1 \cup \dots \cup V_H$ if every edge of $G$ is directed from $V_i$ to $V_{i+1}$ for some $i \in [H-1]$. 
\end{definition}
We remark that graded digraphs are necessarily acyclic, and that the graded partition is unique assuming that the underlying graph of $G$ is connected. In particular, the number $H$ of parts in the graded partition is equal to the height of $G$. 

There are many natural examples of graded digraphs. For example, the $d$-dimensional \emph{grid digraph} $\vv{\Gamma_{d,k}}$ whose vertex set is $[k]^d$ and whose edges are all ordered pairs of the form $$((x_1,\dots,x_i,\dots,x_d), (x_1,\dots,x_i+1,\dots,x_d)) \qquad \text{ for some }i \in [d]$$ is graded; one obtains the graded partition by setting $V_i \coloneqq \{(x_1,\dots,x_d) \in [k]^d : x_1+\dotsb+x_d=i\}$. An important special case of this construction is the \emph{oriented hypercube} $\vv{Q_d}$, which is obtained from the unoriented hypercube graph by directing all edges away towards the positive orthant. More generally, the Hasse diagram of any graded poset is a graded digraph.

Fox, He, and Wigderson \cite[Theorem 1.5]{MR4819947} 
proved that bounded-degree graded digraphs have Ramsey numbers that are at most polynomial in their order, namely that if $G$ is an $n$-vertex graded digraph with maximum degree $\Delta$, then $\vv r(G) \leq n^{11\Delta \log \Delta}$. Our first main result improves this polynomial bound to a linear bound, thus extending the set of structural assumptions which imply a positive answer to the question of Buci\'c--Letzter--Sudakov \cite{MR3980089}.
% the following upper bound on oriented Ramsey numbers of graded digraphs.
% \begin{theorem}[{\cite[Theorem 1.5]{MR4819947}}]
%     If $D$ is a graded digraph with $n$ vertices, height $h$, and maximum degree $\Delta$, then
%     \[
%     \vv r(D) \leq h^{10\Delta \log_2 \Delta}n.
%     \]
%     In particular, as $h \leq n$, we have $\vv r(D) \leq n^{11\Delta \log_2 \Delta}$.
% \end{theorem}
\begin{theorem}\label{theorem:easy_upper_bound}
    If $D$ is a graded digraph on $n$ vertices with maximum degree $\Delta$, then
    \[
        \vv r(D) \leq 10^9 \Delta^3 2^{4\Delta}n.
    \]
    More precisely, if $D$ has maximum in-degree $\Delta^-$ and maximum out-degree $\Delta^+$, then
    $$\vv{r}(D) \leq 10^9 \Delta^+ (\Delta^-)^2 2^{4\Delta^-} n.$$
\end{theorem}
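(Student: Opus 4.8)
The plan is to embed $D$ into an arbitrary tournament $T$ on $N = 10^9\,\Delta^+(\Delta^-)^2 2^{4\Delta^-}n$ vertices by a greedy procedure that processes the graded partition $V_1,\dots,V_H$ of $D$ one layer at a time, from $V_1$ to $V_H$. The reason gradedness helps is exactly the Markovian structure of the edges: every in-neighbour of a vertex $w\in V_{i+1}$ lies in the previous layer $V_i$, so once $V_1,\dots,V_i$ have been embedded, the only constraints on where $w$ may be placed come from the at most $\Delta^-$ already-chosen images of its in-neighbours. Hence it suffices to run the embedding while maintaining, for every not-yet-embedded vertex, a large enough set of admissible host vertices in $T$.

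The first ingredient is to fix a \emph{median order} $v_1,\dots,v_N$ of $T$, i.e.\ a linear order maximising the number of forward edges $v_jv_k$ with $j<k$, and to use its standard local properties: every $v_j$ beats at least half of any interval $\{v_{j+1},\dots,v_k\}$ to its right and is beaten by at least half of any interval to its left; the restriction of a median order to an interval is again a median order; and between any two consecutive intervals at least half of the crossing edges point forward. The role of the median order is to handle the fundamental \emph{direction} obstacle: in a general tournament one has no control over which way the edges between two chosen sets point, but cutting the median order into consecutive blocks $B_1,\dots,B_H$, with $|B_i|$ proportional to $|V_i|$ times a blow-up factor, guarantees that $B_i$ beats a constant fraction of $B_{i+1}$ on average, and (after deleting the few "badly oriented'' vertices of each block) that a typical vertex of $B_i$ beats a constant fraction of $B_{i+1}$. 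This is what makes a layer-by-layer embedding of $V_i$ into $B_i$ feasible at all.

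The main work — and the step I expect to be the principal obstacle — is to set up and maintain the auxiliary structure that keeps the greedy embedding alive. Concretely, while embedding layer $V_i$ I would maintain candidate sets $C(w)\subseteq B_{i+1}$ for the vertices $w\in V_{i+1}$, together with the invariant that $|C(w)|$ has shrunk by only a bounded factor (roughly one half) for each already-embedded in-neighbour of $w$; since $w$ has at most $\Delta^-$ in-neighbours, $|C(w)|$ never drops below about $2^{-\Delta^-}|B_{i+1}|$. When we place a vertex $u\in V_i$ at some $x\in B_i$, we must update $C(w)\to C(w)\cap N^+(x)$ for each of the at most $\Delta^+$ out-neighbours $w$ of $u$. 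The crux is a counting argument: inside $u$'s own (large) candidate set in $B_i$, all but a small fraction of the choices of $x$ simultaneously keep every $C(w)\cap N^+(x)$ of size at least $\tfrac12 |C(w)|$, because the median-order directionality forces most vertices of $B_i$ to beat a constant fraction of each $C(w)\subseteq B_{i+1}$. One must additionally keep enough slack for the images of distinct vertices of the same layer to be distinct, and — this is the subtle point that prevents an $H$-dependent blow-up — arrange that the host budget consumed at layer $i$ is \emph{additive}, proportional to $|V_i|$, rather than a multiplicative factor; this is what lets the bound remain linear even when the height $H$ is as large as $n$. Making all of these constraints simultaneously satisfiable with only a $2^{O(\Delta^-)}$ loss is, I expect, the technical heart of the argument.

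Finally, tracking the constants: each candidate set starts out a constant fraction of its block and is halved at most $\Delta^-$ times, producing the factor $2^{\Delta^-}$ (and, with a cruder halving or a small safety margin, $2^{4\Delta^-}$); avoiding the few badly-oriented vertices in each block, placing the at most $|V_i|$ vertices of layer $i$ into distinct images, and leaving room for the at most $\Delta^+$ candidate-set updates per vertex contributes the polynomial factor $\Delta^+(\Delta^-)^2$ together with absolute-constant overhead. Summing the additive per-layer losses over $i=1,\dots,H$ and optimising the slack then yields $\vv r(D)\le 10^9\,\Delta^+(\Delta^-)^2 2^{4\Delta^-}n$, and the symmetric statement follows at once since $\Delta^\pm\le\Delta$.
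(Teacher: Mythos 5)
Your high-level skeleton (cutting a median order into consecutive blocks and embedding the graded layers $V_1,\dots,V_H$ into them one at a time, exploiting that all in-neighbours of a vertex of $V_{i+1}$ sit in $V_i$) matches the paper's strategy, but the step you yourself flag as the "technical heart" contains a genuine gap, and the justification you offer for it is not valid. Your invariant is that each candidate set $C(w)\subseteq B_{i+1}$ shrinks by at most a factor $2$ per embedded in-neighbour, and you support this by claiming that "median-order directionality forces most vertices of $B_i$ to beat a constant fraction of each $C(w)\subseteq B_{i+1}$." The median order does not give this. What it gives (the paper's Observation 2.9) is that the \emph{average} density from a long preceding interval into an arbitrary subset of a short later interval is about $1/2$; by Markov this only yields that a constant \emph{fraction} of the interval beats a constant fraction of $C(w)$, not that all but few vertices do, and you need the good property to hold simultaneously for up to $\Delta^+$ sets $C(w)$. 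Worse, the vertices actually available for placing $u$ are confined to $u$'s own candidate set $C(u)$, which is a small, adversarially structured subset of $B_i$ (a common out-neighbourhood of previously chosen images, of size possibly only $2^{-\Delta^-}|B_i|$), and median-order properties say nothing about the orientation of edges between a small subset of $B_i$ and a small subset of $B_{i+1}$. It can therefore happen that every admissible $x\in C(u)$ beats only a negligible portion of some $C(w)$, at which point the halving invariant collapses; in a general tournament, intersecting with one more out-neighbourhood can shrink a common out-neighbourhood far more drastically than by a factor $2$.

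This is precisely the obstacle the paper overcomes with two tools absent from your proposal. First, dependent random choice (Lemma 2.6, applied via Lemma A.1) is used to pre-select, inside each median-order block, a subset $A_i$ with the property that all but a small fraction of the $\Delta_i^-$-subsets of $A_i$ have at least $s_{i+1}$ common out-neighbours in $A_{i+1}$; this is what replaces your unjustified "each vertex only halves the candidate sets" heuristic. Second, because "almost all" is not "all", the embedding cannot be done greedily vertex by vertex: the paper embeds an entire layer at once using the Lov\'asz local lemma (Lemma 2.11), choosing the images within a layer so as to avoid both collisions and the exceptional $\Delta^-$-subsets whose common out-neighbourhood is small, which is what keeps the process alive for the next layer. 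Without a substitute for these two ingredients, the proof as sketched does not go through, even though the block sizes, the $2^{O(\Delta^-)}$ bookkeeping, and the additive per-layer budget you describe are consistent with the final bound.
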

We stress that the height of $D$ does not affect the bound in Theorem \ref{theorem:easy_upper_bound} at all. This is somewhat surprising, given the intuition above that the height of a digraph should play an analogous role to the chromatic number of a graph, and should in turn affect the oriented Ramsey number. Nonetheless, an understanding that arises from our techniques is that graded digraphs ``behave like'' bipartite graphs, regardless of their height. 

As an immediate corollary of Theorem \ref{theorem:easy_upper_bound}, we obtain a linear upper bound on the oriented Ramsey numbers of grid digraphs in any fixed dimension, since $\vv{\Gamma_{d,k}}$ has maximum in- and out-degree equal to $d$.
\begin{corollary}
    For any $d \geq 1$, there exists a constant $C_d = 10^9 d^3 2^{4d}$ such that the $d$-dimensional grid digraph $\vv{\Gamma_{d,k}}$  satisfies $\vv r(\vv{\Gamma_{d,k}}) \leq C_d \ab{V(\vv{\Gamma_{d,k}})}$.
\end{corollary}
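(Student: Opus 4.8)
The plan is to derive this directly from the more precise form of Theorem~\ref{theorem:easy_upper_bound}, since the only facts needed are that $\vv{\Gamma_{d,k}}$ is graded and that both its in-degree and out-degree are at most $d$; both of these were essentially recorded already in the discussion preceding the corollary, so the proof will be very short.

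First I would recall the graded partition explicitly: putting $V_i \coloneqq \{(x_1,\dots,x_d) \in [k]^d : x_1 + \dots + x_d = i\}$ (for $i$ ranging over $d, d+1, \dots, dk$), every edge of $\vv{\Gamma_{d,k}}$ goes from some $(x_1,\dots,x_i,\dots,x_d)$ to $(x_1,\dots,x_i+1,\dots,x_d)$, which lies exactly one level higher, so this is a graded partition of $\vv{\Gamma_{d,k}}$. Next I would bound the degrees: the out-neighbours of a vertex $(x_1,\dots,x_d)$ are precisely the vectors $(x_1,\dots,x_i+1,\dots,x_d)$ with $i \in [d]$ and $x_i < k$, hence there are at most $d$ of them, and symmetrically the in-neighbours are the vectors $(x_1,\dots,x_i-1,\dots,x_d)$ with $i \in [d]$ and $x_i > 1$, again at most $d$. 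Thus $\Delta^+(\vv{\Gamma_{d,k}}) \leq d$ and $\Delta^-(\vv{\Gamma_{d,k}}) \leq d$.

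Finally I would substitute $\Delta^+ = \Delta^- = d$ and $n = \ab{V(\vv{\Gamma_{d,k}})} = k^d$ into the second inequality of Theorem~\ref{theorem:easy_upper_bound}, obtaining
$$\vv r(\vv{\Gamma_{d,k}}) \leq 10^9 \cdot d \cdot d^2 \cdot 2^{4d} \cdot \ab{V(\vv{\Gamma_{d,k}})} = 10^9 d^3 2^{4d} \ab{V(\vv{\Gamma_{d,k}})},$$
which is exactly the claimed bound with $C_d = 10^9 d^3 2^{4d}$. I do not expect a genuine obstacle here; the one point worth flagging is that one must invoke the in-/out-degree refinement of Theorem~\ref{theorem:easy_upper_bound} rather than the cruder bound in terms of the maximum degree (which satisfies only $\Delta \leq 2d$), since the latter would give the weaker constant $10^9 (2d)^3 2^{8d}$ instead of the stated $10^9 d^3 2^{4d}$.
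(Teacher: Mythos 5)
Your proposal is correct and is exactly the argument the paper intends: the corollary is derived immediately from the refined form of Theorem~\ref{theorem:easy_upper_bound} using the graded partition by coordinate sum and the fact that $\vv{\Gamma_{d,k}}$ has maximum in- and out-degree at most $d$. Your remark that one must use the in-/out-degree version rather than the bound in terms of $\Delta \leq 2d$ is also the right observation for obtaining the stated constant $10^9 d^3 2^{4d}$.
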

We remark that there has recently been a great deal of interest in Ramsey- and Tur\'an-type questions involving grid graphs, see e.g.\  \cite{conlon2023size, clemens2021size, gao2023extremal,bradac2023turan,gishboliner2022constructing,furedi2013uniform,kim2016two,mota2015ramsey}. In particular, it is proved in \cite[Corollary 1.4]{mota2015ramsey} that the \emph{undirected} Ramsey number of the two-dimensional $k\times k$ grid graph is $(\frac 32+o(1))k^2$.

At the other extreme, where $k$ is fixed and $d$ tends to infinity, we obtain a polynomial bound. For example, for the oriented hypercube $\vv{Q_d}$, Theorem \ref{theorem:easy_upper_bound} implies that $\vv r(\vv{Q_d})\leq 2^{5d+o(d)} = \ab{V(\vv{Q_d})}^{5+o(1)}$. By optimizing our techniques, we are able to improve the exponent from $5$ to $\log_2(17) \approx 4.09$.
\begin{theorem}\label{thm:hypercube}
    There exists an absolute constant $C>0$ such that $\vv{r}(\vv{Q_d}) \leq C d^3 17^{d}$.
\end{theorem}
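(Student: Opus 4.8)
The plan is to adapt the proof of Theorem~\ref{theorem:easy_upper_bound} to exploit the special structure of the oriented hypercube $\vv{Q_d}$, paying close attention to the constants lost at each step. Recall that $\vv{Q_d}$ is a graded digraph on $n = 2^d$ vertices with maximum in- and out-degree both equal to $d$, so the general bound already gives $\vv{r}(\vv{Q_d}) \leq 10^9 d^3 2^{4d} \cdot 2^d = 10^9 d^3 2^{5d}$. To improve the base of the exponential from $2^5 = 32$ to $17$, the idea is that the $2^{4\Delta^-}$ factor in Theorem~\ref{theorem:easy_upper_bound} is the cost of embedding each vertex ``greedily'' given that its (at most $\Delta^-$) in-neighbours have already been placed, and for the hypercube this cost can be made much more efficient by embedding the graded layers $V_1, \dots, V_d$ in order and bounding the number of bad choices at each step more carefully. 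The combinatorial content is to replace the crude $2^{4\Delta^-}n = 2^{4d}\cdot 2^d$ bookkeeping by a bound of the form $C d^3 \cdot (2 \cdot 2^3)^{d} = C d^3 \cdot 16^d$ times some slack, or more precisely to track a per-layer branching factor whose product over the $d$ layers comes out to $17^d$ rather than $32^d$.

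Concretely, I would first revisit the proof of Theorem~\ref{theorem:easy_upper_bound} and isolate the inductive/iterative step in which, having already embedded $V_1 \cup \dots \cup V_{i-1}$ into the tournament, one embeds $V_i$. In that step one maintains for each not-yet-embedded vertex $v$ a ``candidate set'' of tournament vertices that are correctly oriented relative to all already-embedded in-neighbours of $v$; the key quantitative lemma says that these candidate sets shrink by a controlled multiplicative factor per layer. For the hypercube, each vertex $v \in V_i$ has exactly $i$ in-neighbours in $V_{i-1}$, but crucially the \emph{total} number of in-neighbours over all of $v$'s history is $d$, spread across the $d$ layers. The improvement should come from a convexity/averaging argument: rather than paying a factor $2^{4}$ uniformly per in-edge (which would give $2^{4d}$), one pays for the $\binom{d}{i}$ vertices of layer $i$ a factor governed by how many in-edges are being ``resolved'' at that layer, and summing/multiplying these costs with the binomial weights yields $17^d$ by the binomial theorem, $\sum_i \binom{d}{i} 2^{ci} = (1+2^c)^d$ with the right constant $c$ making $1 + 2^c = 17$, i.e.\ $c = 4$. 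The polynomial prefactor $d^3$ absorbs the lower-order losses (the number of layers, the error terms in the dependent random choice or regularity-type step, and the rounding).

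The main obstacle I anticipate is making the per-layer accounting genuinely multiplicative in the way the binomial identity demands: the proof of Theorem~\ref{theorem:easy_upper_bound} is presumably written to be robust to arbitrary graded digraphs of bounded degree, so its candidate-set bookkeeping is stated with worst-case (uniform) degree assumptions, and one must reopen it and re-derive the estimates keeping the degree sequence $\big(|V_i \cap N^-(v)|\big)_i$ symbolic rather than bounded by $\Delta^-$. A secondary difficulty is ensuring that the tournament we start with, of size roughly $C d^3 17^d$, is large enough to survive all $d$ rounds of candidate-set shrinkage simultaneously for all $n = 2^d$ vertices; this is a union-bound / potential-function check that the multiplicative losses telescope correctly, and it is where the exact value of $C$ and the exponent $3$ on $d$ get pinned down. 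If the reopened estimate does not cleanly give $c=4$, a fallback is to prove the weaker statement $\vv{r}(\vv{Q_d}) \leq C d^{O(1)} (1+2^c)^d$ for whatever $c$ the argument yields and note the resulting base $2^{\log_2(1+2^c)}$; the claimed $17^d = (1+16)^d$ strongly suggests $c = 4$ is exactly what falls out.
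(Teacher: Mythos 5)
Your overall strategy is exactly the one the paper follows: one proves a layer-sensitive strengthening of \cref{theorem:easy_upper_bound} in which the exponential cost attached to the vertices of layer $V_i$ depends only on the maximum in-degrees across the two adjacent bipartite layers, and the hypercube bound then drops out of the binomial theorem, $\sum_{i=0}^d \binom{d}{i}\,4\cdot 16^{i} = 4\cdot 17^d$, with $d^3$ absorbing the polynomial losses. This is precisely \cref{theorem:graded_upper_bound}, which gives $\vv r(G)\le 10^9(\Delta^-)^2\Delta^+\sum_i 2^{2(\Delta_{i-1}^-+\Delta_i^-)}|V_i|$ with $\Delta_i^-$ the maximum in-degree of $G[V_i\cup V_{i+1}]$; for $\vv{Q_d}$ one has $\Delta_i^-=i+1$, hence $\Delta_{i-1}^-+\Delta_i^-=2i+1$ and the per-vertex cost in layer $i$ is $2^{2(2i+1)}=4\cdot 16^i$, matching your ``$c=4$'' guess.

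However, as written your proposal has a genuine gap: the layer-dependent refinement is where all the work lies, and you only state that one should ``reopen'' the proof of \cref{theorem:easy_upper_bound} with symbolic degrees, while explicitly conceding that you have not checked that the accounting comes out multiplicative with the right base. The nontrivial point, carried out in \cref{section:appendix}, is that the host sets $A_H,\dots,A_1$ are built backwards along the median order, and $|A_i|$ must simultaneously survive the dependent-random-choice step (\cref{lemma:graded_backward_step}), where sampling $k=2\Delta_i^-$ vertices shrinks the chosen interval by a factor of roughly $(2+\varepsilon)^{2\Delta_i^-}$, and be large enough that $\Delta_i^-$-subsets of $A_i$ retain common out-neighborhoods of size $s_{i+1}$ in $A_{i+1}$ with failure density about $\frac{1}{4\Delta^+\Delta^-}(s_{i+1}/b_{i+1})^{\Delta_i^-}$, as required by the local-lemma embedding step (\cref{lemma:forward_step}). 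This forces a recursion of the form $n_i = (2+\varepsilon)^{2(\Delta_{i-1}^-+\Delta_i^-)}|V_i| + n_{i+1}/2$, whose geometric decay is what keeps the final bound linear in $\sum_i 2^{2(\Delta_{i-1}^-+\Delta_i^-)}|V_i|$ rather than accumulating factors of $H$ or of the global $\Delta^-$ in the exponent; verifying that the set sizes and error densities remain compatible at every layer, with only the polynomial prefactor $(\Delta^-)^2\Delta^+$ lost, is the substance of the proof and is absent from your sketch. Note also that the correct per-layer cost involves the in-degrees on \emph{both} sides of $V_i$, i.e.\ $\Delta_{i-1}^-+\Delta_i^-$, not only the in-edges ``resolved at'' layer $i$; for the hypercube this coincides numerically with your $2^{4i}$ up to a constant, but the distinction matters for the general statement you would need to prove.
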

In fact, motivated by the example of the hypercube, in  \cref{section:appendix} we prove a strengthening of \cref{theorem:easy_upper_bound}, which gives a better bound for graded digraphs where the large in-degrees are only in the parts of the graded partition where the number of vertices is small. Such a result is useful for $\vv{Q_d}$, since in the hypercube, almost all vertices (and in particular those vertices lying in the very large parts of the graded partition) have in- and out-degree close to $d/2$.

In the undirected setting, it is a major open problem to determine $r(Q_d)$.
A famous conjecture of Burr and Erd\H{o}s \cite{MR371701} from 1975 is that the Ramsey number of the hypercube is linear in its order, i.e.\ that $r(Q_d) = O(2^d)$. This question has been intensively studied (see e.g.\ \cite{MR787944,MR1832445,MR1848785,MR2285201,MR2520279,MR3548291}); the current best known bound is due to Tikhomirov \cite{MR4722306}, who proved that $r(Q_d) \leq 2^{(2 - \varepsilon)d}$, where $\varepsilon > 0$ is some small absolute constant.
However, the results in the undirected setting cannot be used directly to obtain upper bounds on $\vv r(\vv{Q_d})$, and to the best of our knowledge \cref{thm:hypercube} is the first known polynomial bound on $\vv r(\vv{Q_d})$. Instead, the techniques from the undrected setting naturally yield polynomial upper bounds on the oriented Ramsey number of the \emph{bipartite orientation} of the hypercube, where all edges are directed from vertices of even to odd Hamming weight, rather than the more natural orientation $\vv{Q_d}$.

Our second main result shows that \cref{theorem:easy_upper_bound} is close to best possible, in the sense that there exist graded digraphs with maximum degree $\Delta$ and oriented Ramsey number of at least $c^\Delta |V(G)|$ for some absolute constant $c>1$. A very similar result was proved in the undirected setting by Graham, R\"odl and Ruci\'nski \cite{MR1832445}; in fact, it is not hard to adapt their construction to show the existence of such a $G$ which is \emph{bipartite}, that is, of height $2$. However, by modifying their construction appropriately, we are able to prove such a result for $G$ of arbitrary height, with layers of equal size, where the lower bound again does not depend on the height.

\begin{theorem}\label{theorem:lower_bound}
    There exist constants $c > 1$ and $\Delta_0$ such that for all $\Delta \geq \Delta_0$, $n \geq \Delta$, and $H \geq 2$ there exists a graded digraph $G$ with maximum degree $\Delta$ and with a graded partition $V(G) = V_1 \cup \dots \cup V_H$ such that $|V_i| \leq n$ for all $i \in [H]$, such that $\vv{r}(G) > c^\Delta H n \geq c^\Delta \ab{V(G)}$.
\end{theorem}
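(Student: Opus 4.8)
The plan is to adapt the Graham--R\"odl--Ruci\'nski lower bound for bounded-degree bipartite graphs \cite{MR1832445}: first prove the theorem for $H=2$, and then amplify the construction so that it produces an extremal example with an arbitrary number of equal-sized layers while losing nothing in the base of the exponent.

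\textbf{Step 1: the height-$2$ gadget.} For $H=2$ we transport the construction of \cite{MR1832445} to the tournament setting. One takes $G_0$ to be a bipartite graded digraph with parts $A_0,B_0$ of size $n$ and all edges oriented $A_0\to B_0$, where the underlying bipartite graph is a suitable bounded-degree pseudorandom graph of maximum degree $\Delta$, and shows by a probabilistic argument that some tournament on $N_0\ge c_0^\Delta n$ vertices (for an absolute constant $c_0>1$) contains no copy of $G_0$. Since $|V(G_0)|=2n$, this gives $\vv r(G_0)>c_0^\Delta n=\tfrac{1}{2}c_0^\Delta|V(G_0)|$, which proves the theorem for $H=2$ with both layers of size $n$.

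\textbf{Step 2: amplifying the height.} For general $H$ we set $V(G)=V_1\cup\dots\cup V_H$ with $|V_i|=n$ and, between each consecutive pair $V_i,V_{i+1}$, insert a copy of a height-$2$ gadget as above, but one in which every vertex has in-degree at most $\Delta/2$ and out-degree at most $\Delta/2$, so that the total degree remains at most $\Delta$. (Working with the degree parameter $\Delta/2$ replaces $c_0$ by $c_1:=\sqrt{c_0}>1$ in what follows.) Two features of $G$ are essential: it is connected, and each gadget between consecutive layers is a good vertex-expander. We then construct a host tournament $T$ on $c_1^\Delta Hn$ vertices with no copy of $G$. Informally, $T$ is assembled from $\Theta(H)$ scaled copies of the height-$2$ host, each individually too small to contain even a single full height-$2$ gadget; because the edges of $G$ must be mapped ``forward'' under any embedding and because consecutive layers of $G$ expand, an embedding of $G$ would be forced to compress one entire gadget into a single such copy, which is impossible. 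The expansion is precisely what prevents the embedder from spreading a layer of $G$ thinly across many copies of the small host so as to evade the obstruction. With $|V(G)|=Hn$ and $\vv r(G)>c_1^\Delta Hn\ge c_1^\Delta|V(G)|$, taking $c=c_1$ and $\Delta_0$ large enough (to guarantee the existence of the pseudorandom gadgets with the required expansion and to absorb lower-order error terms) completes the proof.

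\textbf{The main obstacle.} The crux is that the bound must be uniform in $H$: the host $T$ must have $\Theta(c^\Delta Hn)$ vertices, so its size grows linearly with $H$, yet it must still defeat every embedding of the $H$-fold stacked digraph $G$. The naive attempts all fail --- stacking copies of the height-$2$ host in transitive order allows one to embed $G$ one layer per copy, whereas using too few copies makes each copy large enough to swallow a whole gadget --- so designing the host to close both loopholes simultaneously, together with the expansion argument showing that any embedding must over-concentrate some gadget, is where the real work lies. A secondary, more routine difficulty is arranging all $H$ layers to have exactly equal size while keeping the maximum degree at $\Delta$; this governs how the pseudorandom gadgets between consecutive layers are chosen and pruned.
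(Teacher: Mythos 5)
Your overall strategy does coincide with the paper's: a Graham--R\"odl--Ruci\'nski-type bipartite gadget $D_0$ with in- and out-degrees at most $\Delta/2$ placed between every pair of consecutive layers, and a host tournament built as a transitive stack of copies of the bipartite host. But there is a genuine gap in how your two steps fit together. Step~1 only supplies a tournament $R$ containing no copy of the \emph{full} gadget, while Step~2 claims that any embedding of $G$ is ``forced to compress one entire gadget into a single such copy.'' That conclusion is not attainable: the forward-orientation/expansion bookkeeping can only show that, whenever the embedding fails to advance to a later block across two consecutive layers, at least a $0.98$-fraction of each of those two layers lands inside one copy of $R$; an adversarial embedding can always scatter a $1\%$ minority of a layer elsewhere, so you never trap the whole gadget. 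Consequently the avoidance property required of $R$ is strictly stronger than non-containment of $D_0$: one needs that $R$ contains no copy of the induced subgraph $D_0[A'\cup B']$ for \emph{any} subsets $A',B'$ of the two classes of size at least $0.98n$. This robust version is exactly what the paper isolates and proves in \cref{lemma:bipartite_lower_bound} (via the quantitative counting in \cref{lemma:guest_graph,lemma:host_graph}); without stating and proving this strengthening, your Step~2 does not go through as written.

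Two further points of calibration. First, the expansion you invoke must be made quantitative in a specific form: any two subsets of consecutive layers of size at least $0.01n$ span an edge of $D_0$. Combined with the fact that all cross-block edges of $T$ point forward, this yields the monotone index argument (if $1\%$ of layer $i$ sits in the top blocks, then $99\%$ of layer $i+1$ does), and then the robust non-containment forces the index to strictly increase every two layers, giving a contradiction against roughly $h/2$ blocks. Second, the ``dilemma'' you describe is somewhat illusory: the transitive stacking you dismiss as a naive failure is in fact the paper's host. The resolution is simply to use about half as many blocks as $G$ has layers, each block being a full-size copy of the height-$2$ host (which is not ``too small'' in cardinality --- it has $(c')^{\Delta/2}n \gg n$ vertices; the obstruction is structural, not a size count), and to absorb the resulting constant factor into the base $c$. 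Also, connectivity of $G$ plays no role in the lower bound.
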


\subsection{Digraphs with bounded graded bandwidth}
Our proof of \cref{theorem:lower_bound}, like the proof of 
Graham, Rödl and Ruciński \cite{MR1832445} 
in the undirected case,
% showed that there exist bipartite graphs $G$ with maxium degree $\Delta$ whose Ramsey number is at least $c'^{\Delta}|V(G)|$ for some absolute constant $c' > 1$.
% Their proof 
exploits the strong expansion properties of random bipartite graphs, showing that such graphs yield the desired lower bound. However, it is now well understood that graphs without such expansion properties have smaller Ramsey numbers. A key notion capturing this lack of expansion is \emph{bandwidth}.

% For some graphs, which don't have such expansion properties, we can, however, do better.
We say that a graph $G$ has \emph{bandwidth} at most $w$ if there exists a labeling of its vertices with $[n]$ such that for every edge $\{i, j\} \in E(G)$ we have $|i - j| \leq w$.
Allen, Brightwell and Skokan \cite{MR3071850} showed that for any $\Delta$ there exists a constant $\beta$ such that if $G$ has maximum degree $\Delta$ and bandwidth at most $\beta|V(G)|$ then $r(G) \leq (2\Delta + 6)|V(G)|$.
Proving the Burr--Erd\H{o}s conjecture for graphs with bounded bandwidth was a crucial step towards its full resolution in 2017 by Lee \cite{MR3664811}; prior to this, he showed \cite{1501.05350} that for any $d$ and $\epsilon$, if $G$ is a sufficiently large graph with degeneracy $d$ and bandwidth at most $n^{1 - \epsilon}$, then $r(G) \leq (2d + 7)|V(G)|$.
Bandwidth has also emerged as a fundamental parameter in various other areas of graph theory, such as the famous bandwidth theorem of B\"ottcher, Schacht, and Taraz \cite{MR2448444}.

One can define the bandwidth of an acyclic digraph in the same way, now requiring that every edge is directed from $i$ to $j$, and that these satisfy $1 \leq j-i \leq w$. This is another natural structural notion, so it is not too surprising given the discussion above that digraphs of bounded bandwidth have linear Ramsey numbers. Indeed, answering a question of Yuster \cite{MR4156082}, it was proved by Dragani\'c et al.\ \cite{MR4328353} that every $n$-vertex acyclic digraph $G$ with bandwidth $w$ satisfies $\vv r(G) \leq 2^{4w+6}n$.

For a digraph, having bounded bandwidth and being graded are two structural notions which capture the idea of having ``local edge structure''. There is a natural common generalization of them, which we now define.

\begin{definition}[Graded bandwidth]
    We say that a digraph $G$ has \emph{graded bandwidth} at most $w$ if its vertex set can be partitioned into $V_1, \dots, V_H$ such that for every $uv \in E(G)$ with $u \in V_i$ and $v \in V_j$ we have $1 \leq j -i \leq w$.
    We call the sets $V_1, \dots, V_H$ the \emph{layers} of the graded partition of $G$.
\end{definition}
\noindent Note that in case $w=1$, this precisely recovers the definition of a graded digraph. 

It follows from \cite[Theorem 3.12]{MR4819947} that every $n$-vertex digraph $G$ with maximum degree $\Delta$ and graded bandwidth $w$ has $\vv{r}(G) \leq n^{C_{\Delta,w}}$ for some constant $C_{\Delta,w} = O(\Delta(\log \Delta+\log w))$.
Our next main result substantially improves this polynomial bound, stating that all bounded-degree digraphs with bounded graded bandwidth have linear oriented Ramsey numbers.

\begin{theorem}\label{theorem:main_theorem}
    Let $\Delta, w \in \mathbb{N}$ and let $G$ be a digraph with maximum degree $\Delta$ and graded bandwidth $w$.
    Then
    \[
        \vv{r}(G) \leq 3^{57\Delta w} |V(G)|.
    \]
\end{theorem}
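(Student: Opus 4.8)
The plan is to reduce to the graded case --- which is precisely graded bandwidth $w=1$, handled by \cref{theorem:easy_upper_bound} --- by an induction on $w$. The key structural observation is that a digraph of graded bandwidth $w$ can be ``coarsened'' into a graded digraph each of whose vertices is blown up into a digraph of strictly smaller graded bandwidth. Concretely, fix a graded partition $V(G)=V_1\cup\dots\cup V_H$ witnessing graded bandwidth $w$, and group consecutive layers into \emph{windows} $W_1,\dots,W_{H'}$ with $H'=\lceil H/w\rceil$ and $W_t=V_{(t-1)w+1}\cup\dots\cup V_{tw}$. Since every edge of $G$ spans between $1$ and $w$ consecutive original layers, two things follow: the ``window digraph'' obtained by contracting each $W_t$ to a point is graded (every edge of $G$ either stays inside some $W_t$ or goes from $W_t$ to $W_{t+1}$), and for each $t$ the induced digraph $G[W_t]$ occupies at most $w$ original layers and hence has graded bandwidth at most $w-1$, still with maximum degree at most $\Delta$. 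Thus $G$ looks like a graded digraph --- to which the $w=1$ machinery applies --- in which each vertex has been replaced by a digraph falling under the inductive hypothesis.

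To turn this into a proof I would establish, by induction on $w$, a \emph{robust} version of the embedding statement: for every tournament $T$ on at least $C_{\Delta,w}|V(G)|$ vertices, a copy of $G$ can be found even after each vertex of $G$ has been pre-assigned a large ``allowed region'' in $V(T)$, subject only to a mild consistency condition between the regions of layers lying in a common window. Robustness is exactly what the induction consumes: the $w=1$ argument places each window $W_t$ into a block $B_t\subseteq V(T)$, and, having committed to $B_t$, we must still be able to embed $G[W_t]$ (of graded bandwidth $w-1$, via the inductive hypothesis) inside $B_t$ while honouring the edges $G$ has between $W_{t-1}$ and $W_t$ and between $W_t$ and $W_{t+1}$. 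Each step of the induction multiplies the constant by the $w=1$ constant, which is of the form $2^{O(\Delta)}$; after $w$ steps this accumulates to $2^{O(\Delta w)}=3^{O(\Delta w)}$, and being suitably generous about the bookkeeping losses and the overhead of making each embedding robust yields the claimed $3^{57\Delta w}$.

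The step I expect to be the main obstacle is the handling of the cross-block constraints: keeping the region available for each window simultaneously restricted by the already-embedded earlier windows, large enough to host the recursive within-window embedding, and compatible with the demands that edges leaving that window will impose on later ones. A natural but doomed simplification is to fix disjoint blocks $B_1,\dots,B_{H'}\subseteq V(T)$ in advance so that each $B_t$ dominates almost all of $B_{t+1}$; the strength of domination one needs in order to satisfy up to $\Delta$ simultaneous incoming constraints per vertex cannot be maintained over many consecutive blocks unless $|V(T)|$ is allowed to grow super-linearly --- indeed exponentially --- in $|V(G)|$. Informally, a median order only guarantees that each vertex dominates \emph{half} of any interval ahead of it, which is useless once one intersects over $\Delta\ge 2$ in-neighbours, while any stronger domination shrinks the usable region by a constant factor per block. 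The resolution is to keep the regions dynamic and to exploit that every vertex of $G$ has at most $\Delta$ neighbours, all confined to a band of $w$ layers, so that a vertex's region is narrowed only boundedly often before it is embedded and each narrowing costs only a controlled factor; checking that these factors multiply out to the stated bound, uniformly in the height $H$, is the technical heart of the argument, and it is here that the proof must genuinely build on --- rather than merely cite --- the method behind \cref{theorem:easy_upper_bound}.
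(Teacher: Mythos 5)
Your structural observation is sound, and in fact it mirrors the paper's own first move: the paper also groups the layers into blocks of $w$ consecutive layers (the sets $B_1,\dots,B_h$ with $h\approx H/w$), and your check that every edge either stays inside a window or goes to the next window is correct, as is the fact that $G[W_t]$ has graded bandwidth at most $w-1$. However, what you propose beyond this point is a plan, not a proof, and the step you defer as ``the technical heart'' is precisely the content that a proof must supply. The induction on $w$ as you set it up does not close, for two concrete reasons. First, a vertex of $W_t$ may have in-neighbours both in $W_{t-1}$ and in $W_t$ (in layers up to $w-1$ apart inside the window), so what the embedding needs is that almost all \emph{mixed} $\Delta$-subsets, drawn partly from the block $B_{t-1}$ and partly from the already-used portion of $B_t$, have a large common out-neighbourhood in the relevant part of $B_t$. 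An outer $w=1$-type statement controls tuples lying in $B_{t-1}$, and an inner $(w-1)$-instance run inside $B_t$ controls tuples lying in $B_t$, but neither, nor their conjunction, controls mixed tuples: the number of bad mixed tuples is not bounded by the two separate guarantees. The paper has to build this mixed property directly — this is exactly the condition that $(B_{i-1}\cup B_{i,j-1},\emptyset)$ is an out-community into $B_{i,j}$ in \cref{theorem:embedding_structure}, obtained by iterating a modified dependent random choice (\cref{lemma:dependent_random_choice}, with the auxiliary set $A$ and the up-closed family constraint) $w$ times within each block while \emph{simultaneously} choosing the previous block (\cref{lemma:backward_step}). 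That simultaneity is what your recursive formulation cannot reproduce, because in your scheme the inner instance is solved after the outer one has already committed to $B_t$.

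Second, the inductive statement you invoke (``every tournament on at least $C_{\Delta,w-1}|W_t|$ vertices, with pre-assigned large allowed regions'') is not the statement your recursion actually needs. The inner host is not an arbitrary tournament with a size bound but a block about which you only have density/community-type information, the allowed regions are produced adaptively by the embedding of $W_{t-1}$, and — crucially — the embedding of $W_t$ must also be chosen so as to \emph{protect the future}: for each vertex $v\in W_{t+1}$ with in-neighbours in $W_t$, most placements of those in-neighbours must leave $v$ a large common out-neighbourhood in $B_{t+1}$, a set entirely outside the inner instance. So the robust statement would have to be parametrised additionally by up-closed families of forbidden placements (the role played by the families ${\cal F}_v$ and \cref{observation:community_extension_density} together with the local lemma in \cref{lemma:forward_step}), and one would have to verify that the $w=1$ machinery delivers hypotheses of exactly this strengthened form to the $(w-1)$-instance, with constants that compound to $3^{57\Delta w}$. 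None of this is formulated or verified in your write-up; your closing paragraph correctly diagnoses the difficulty but resolves it only by assertion. The paper avoids the recursion altogether: it constructs all $w$ nested levels inside each block in one backward pass and then embeds the layers in a single forward pass with the Lov\'asz local lemma, and it is in these two lemmas that the actual work lies.
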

This theorem is perhaps gives the most general known conditions which imply that a digraph has a linear Ramsey number. Indeed, it recovers the result of Dragani\'c et al.\ \cite{MR4328353} on digraphs of bounded bandwidth (as such digraphs trivially have bounded graded bandwidth), as well as the result of Fox--He--Wigderson \cite{MR4819947} on digraphs of bounded height (as a digraph of height $h$ has graded bandwidth at most $h$). It also implies a special case of the theorem of Aboulker et al.\ \cite{2410.23566} on blowups of oriented trees, as a constant-sized blowup of a bounded-degree oriented tree also has bounded degree and bounded graded bandwidth. Finally, of course, \cref{theorem:main_theorem} recovers the linear bound on graded digraphs from \cref{theorem:easy_upper_bound}. We remark that in all of these cases, the quantitative dependencies are worse than those arising from the original proofs; in particular, if one sets $w=1$ in \cref{theorem:main_theorem} one obtains a weaker statement than \cref{theorem:easy_upper_bound}; unsurprisingly, proving a more general result entails obtaining weaker bounds.
% We note that, while being much more general, this result in particular implies that $\vv{r}(P_n^{(k)}) = O_k(n)$ (with a much worse constant that in \cite{MR4328353}) and that any bounded-degree subgraph of a blowup of a tree has linear Ramsey number. Additionally, it implies the same for bounded-degree digraphs of bounded height, recovering a result of Fox, He, and Wigderson \cite{MR4819947} (albeit with worse dependence on the height).

% Our proof combines the dependent random choice technique, the median order of a tournament and an embedding argument based on the Lova\'{a}sz local lemma---and generalizes the techniques used in \cite{MR4328353} for digraphs of bounded bandwidth.
% The main difference and difficulty lies in finding a suitable structure in the host tournament that allows us to embed a digraph with graded bandwidth $w > 1$.
% Finding such a structure requires a more sophisticated application of the dependent random choice technique, somehow similar in spirit to \cite{MR3664811}.
% We defer a more detailed proof sketch to \cref{section:proof_outline}.

\vspace{5px}

The remainder of the paper is structured as follows. 
We first prove \cref{theorem:main_theorem} in \cref{section:main_theorem_proof}, which begins with a detailed proof outline.
In \cref{section:graded_upper_bound}, we then state a strengthening of \cref{theorem:easy_upper_bound} and optimizations in its proof compared to \cref{theorem:main_theorem}.
The formal proof is deferred to \cref{section:appendix}.
In \cref{section:lower_bound}, we first give a sketch of the proof of \cref{theorem:lower_bound}, then prove it for height-$2$ digraphs, and finally prove it in full generality.
We end in \cref{section:concluding_remarks} with some concluding remarks and open problems.

\paragraph{Notation:} For a directed graph $D$, we use $V(D)$ to denote its vertex set and $E(D)$ to denote its edge set, which is a collection of ordered pairs of elements of $V(D)$.
For a vertex $v \in V(D)$ we write $N_D^+(v)$ and $N_D^-(v)$ for its out- and in-neighborhood and $d_D^+(v)$ and $d_D^-(v)$ for its out- and in-degree, respectively.
For a subset $U \subseteq V(D)$ we let $N_D^+(U)$ denote the common out-neighborhood of $U$ and similarly $N_D^-(U)$ the common in-neighborhood of $U$.
Additionally, for a vertex $v \in V(D)$ we let $d_D^+(v, U) = |N_D^+(v) \cap U|$ and $d_D^-(v, U) = |N_D^-(v) \cap U|$ be the out- and in-degree of $v$ into $U$. For two sets $A, B \subseteq V(D),$ we write $e_D(A, B) = |\{ (a, b) \in E(D)\, \vert \, a \in A, b\in B\}|$ for the number of edges between $A$ and $B$, and we write $d_D(A, B) = \frac{e_D(A, B)}{|A||B|}$, for the density between $A$ and $B$.
Whenever the digraph $D$ is clear from context, we omit the subscript and write $d^+(v)$ for $d_D^+(v)$, etc.
Throughout the paper, we omit ceilings and floors whenever they are not crucial.

\section{Upper bound for the general case}\label{section:main_theorem_proof}
\subsection{Proof Outline}\label{section:proof_outline}
\begin{figure}[H]
    \centering
    \begin{subfigure}[h]{0.45\linewidth}
        \includegraphics[width=\linewidth]{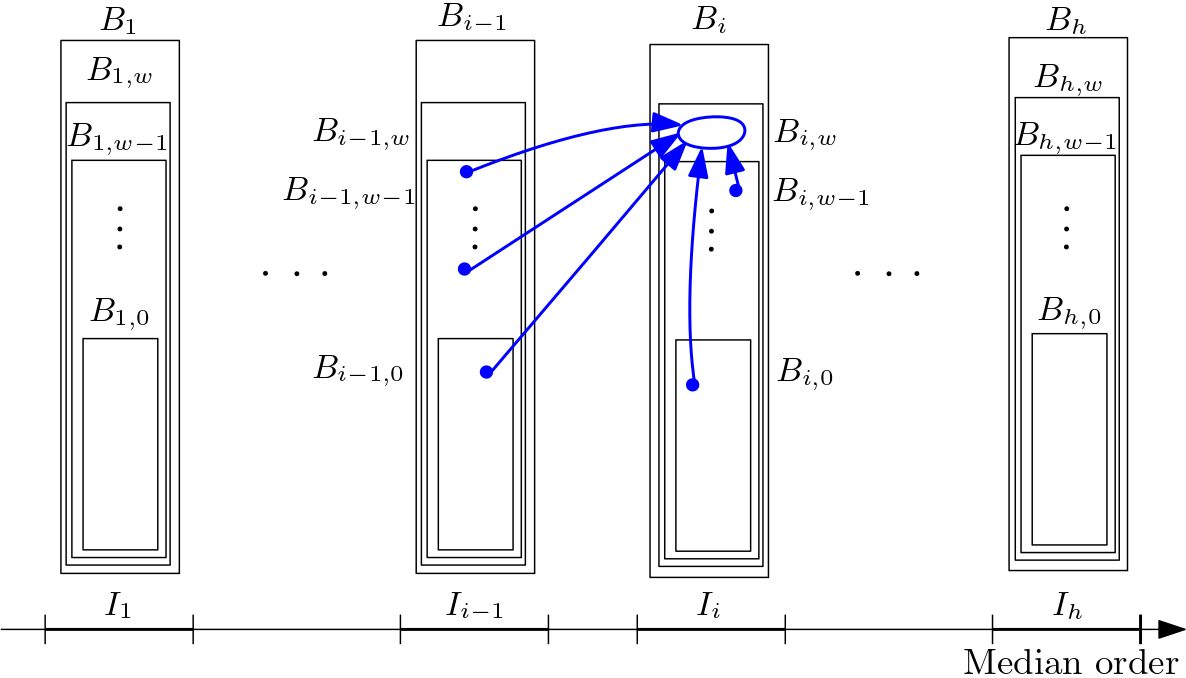}
        \caption{In \cref{theorem:embedding_structure}, we find sets $B_1, \dots, B_h \subseteq V(T)$, together with subsets $B_{i, 0} \subseteq B_{i, 1} \subseteq \dots \subseteq B_{i, w} \subseteq B_i$ for each $i \in [h]$, such that for any $i \in [h]$ and $j \in [w]$ almost all $\Delta$-subsets of $B_{i-1} \cup B_{i, j-1}$ have a large common out-neighborhood in $B_{i, j}$.}
        \label{subfig:embedding_structure}
    \end{subfigure}
    \hfill
    \begin{subfigure}[]{0.45\linewidth}
        \includegraphics[width=\linewidth]{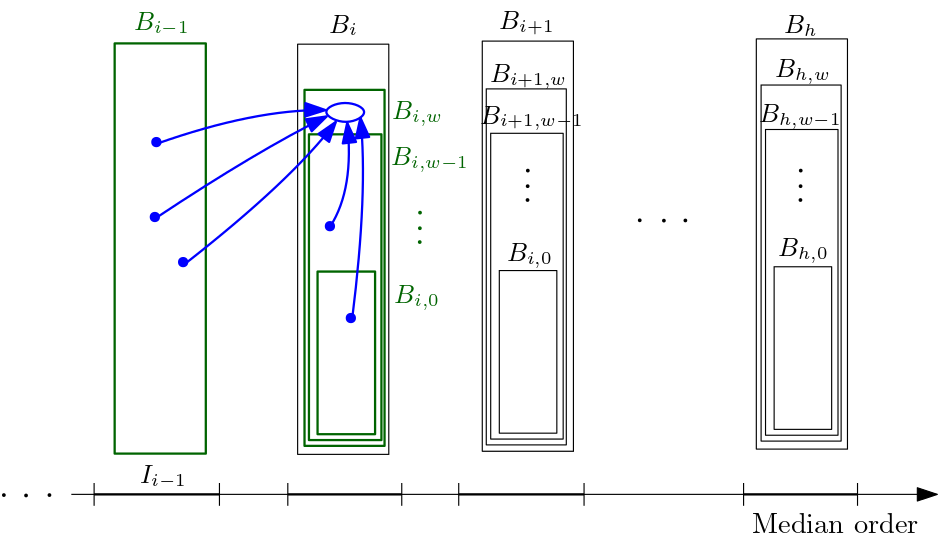}
        \caption{We find this structure inductively, starting with $B_h$. After having found $B_i, \dots B_h$ and the subsets $B_{i', 0} \subseteq B_{i', 1} \subseteq \dots \subseteq B_{i', w} \subseteq B_{i'}$ for $i < i' \leq h$, \cref{lemma:backward_step} will allow us to simultaneously find $B_{i, 0}  \subseteq B_{i, 1}  \subseteq \dots \subseteq B_{i, w} \subseteq B_i$ and $B_{i-1}$.}
        \label{subfig:lemma43}
    \end{subfigure}
    \caption{We find the embedding structure in the host tournament $T$ (\cref{subfig:embedding_structure}) layer-by-layer using \cref{lemma:backward_step} (\cref{subfig:lemma43}).}
    \label{fig:embedding_structure}
\end{figure}
Let $G$ be a digraph with graded bandwidth $w$ and maximum degree $\Delta$, and let $V_1, \dots, V_H$ be the layers of a graded partition of $G$.
To prove an upper bound on $\vv{r}(G)$, we fix a tournament $T$ of sufficient size and wish to find an embedding of $G$ into $T$.
% In a similar spirit to \cite{2405.01069}, we want to achieve that in two steps.
We achieve this in two steps.
First, we want to find a suitable structure in the host tournament that will make this embedding simpler.
More specifically, we would like to find sets $A_1, \dots, A_H \subseteq V(T)$ such that for any $i \in [H]$ almost all $\Delta$-tuples of vertices in $A_{i-w} \cup \dots \cup A_{i-1}$ have a large common out-neighhorhood in $A_i$.
We achieve that using the dependent random choice technique combined with properties of the median order of $T$, i.e., an ordering of the vertices of $T$ that maximizes the number of forward edges. 
In the second step, we then want to embed the layers $V_1, \dots, V_H$ of $G$ one by one, such that each $V_i$ lands in the corresponding $A_i$.
Since the edge structure between the $A_i$'s in $T$ and between the $V_i$'s in $G$ resemble each other, this will in fact be possible using a careful application of the Lov\'asz local lemma, similar to the argument used in \cite{MR3548291}.

Finding the sets $A_1, \dots, A_H$ turns out to be quite challenging.
% and is the main difference to \cite{2405.01069} where we only want almost all $\Delta$-tuples in $A_{i-1}$ to have large common out-neighborhood into $A_i$ for each $i \in [H]$.
In fact, it turns out to be more convenient to find a slightly stronger structure.
We let $h = \frac{H}{w}$ and find disjoint sets $B_1, \dots, B_h$ together with subsets $B_{i, 0} \subseteq \dots \subseteq B_{i, w} \subseteq B_i$ for each $i \in [h]$ such that almost every $\Delta$-tuple in $B_{i-1} \cup B_{i, j-1}$ has a large common out-neighborhood into $B_{i, j}$ for every $i\in [h]$ and $j \in [w]$ (see \cref{fig:embedding_structure}).
Note that by taking $A_{iw + j} = B_{i+1, j}$ for each $0 \leq i \leq\frac{H}{w}$ and $1 \leq j \leq w$ we can recover our original plan with most pairs of the sets being disjoint.

We find the sets $B_i$ and $B_{i, j}$ inductively using the median order of $T$, starting with $i = h$.
At a given step, for some $i$, we have found the sets $B_i, \dots, B_h$ as well as the subsets $B_{i', j}$ for all $i' > i$ and $0 \leq j \leq w$.
Moreover, we have ensured that $B_i \subseteq I_i$ for some small interval $I_i$ of the median order of $T$.
Using the properties of the median order, we can find an interval $I_{i-1}$ not far away from $I_i$ and of roughly the same size, such that at least $1/3$ of the edges from $I_{i-1}$ to $B_i$ are directed from $I_{i-1}$ to $B_i$. 
Then, we simultanously find the sets $B_{i, 0} \subseteq \dots \subseteq B_{i, w} \subseteq B_i$ and the set $B_{i-1}$, making sure that $B_{i-1} \subseteq I_{i-1}$.

To achieve that, we apply the dependent random choice technique (for more information on this method, see the survey \cite{MR2768884}).
Put briefly, given two sets $L$ and $R$, with some constant fraction of edges going from $L$ to $R$, dependent random choice allows us to find a set $K \subseteq R$ of constant size $k$ with the property that $L' = N^-(K) \cap L$ is large and almost all $\Delta$-subsets of $L'$ have many common out-neighbors in $R$.
Given a $B_{i, j}$, this would allow us to find a $B_{i, j-1} \subseteq B_{i, j}$, where almost all $\Delta$-subsets of $B_{i, j-1}$ have large common out-neighborhood in $B_{i, j}$.
This is however not enough, because recall that we want to have that most of $\Delta$-subsets of $B_{i-1} \cup B_{i, j-1}$ have this property.

Therefore, we slightly modify the dependent random choice argument.
We notice that given some sets $A, L$ and $R$, again with a constant fraction of vertices going from $L$ to $R$, we can find a $K \subseteq R$ of constant size $k$ such that for $A' = N^-(K) \cap A$ and $L' = N^{-}(K) \cap L$ we again have that $L'$ is large and almost all $\Delta$-subsets of $A' \cup L'$ have a large common out-neighborhood in $R$.
By setting $A = I_{i-1}$ we are therefore almost done; what remains to do is to control the size of $A'$ in some way.
Luckily, this we can do by yet another application of dependent random choice.
\begin{figure}
    \centering
    \begin{subfigure}[h]{0.45\linewidth}
        \includegraphics[width=\linewidth]{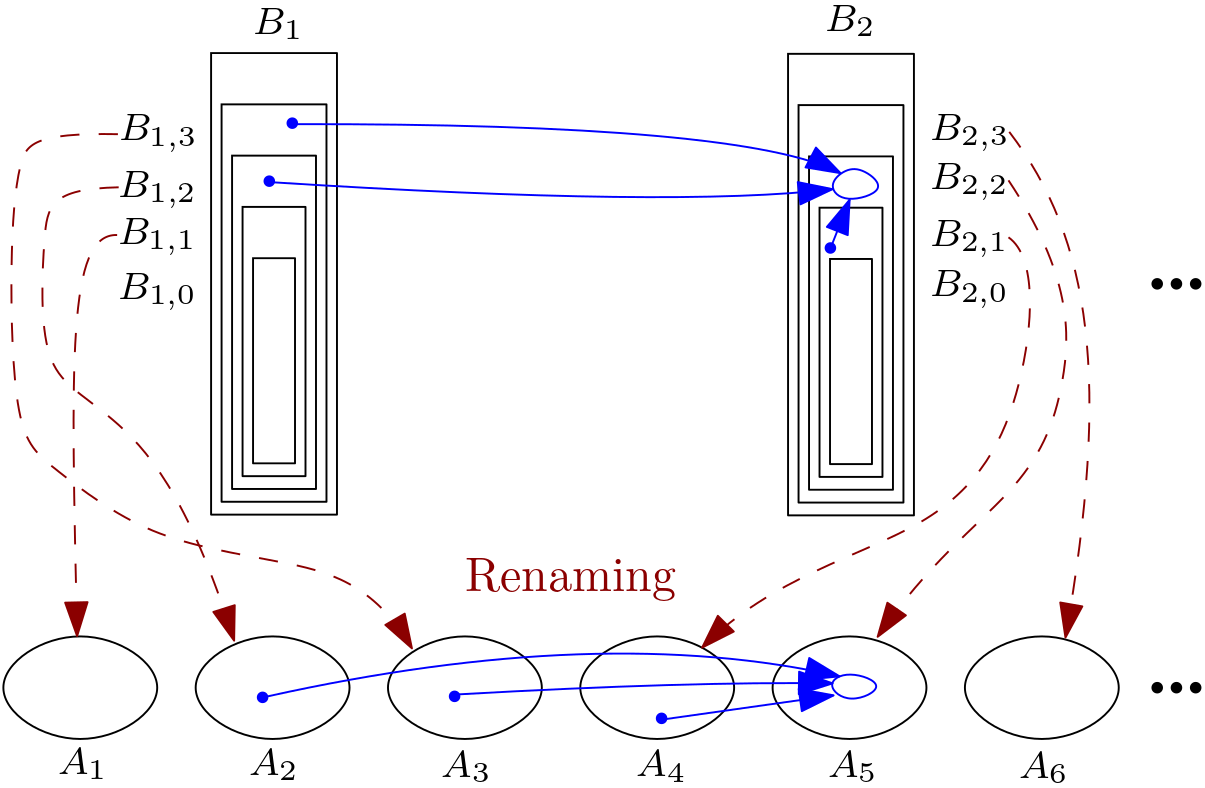}
        \caption{Renaming of the sets $B_{i, j}$ from \cref{theorem:embedding_structure} into (non-disjoint) $A_1, \dots, A_H$ such that almost all $\Delta$ subsets of $A_{i - w} \cup \dots \cup A_{i-1}$ have large common out-neighborhood in $A_i$ for each $i \in [H]$.}
        \label{subfig:embedding_renaming}
    \end{subfigure}
    \hfill
    \begin{subfigure}[h]{0.45\linewidth}
        \includegraphics[width=\linewidth]{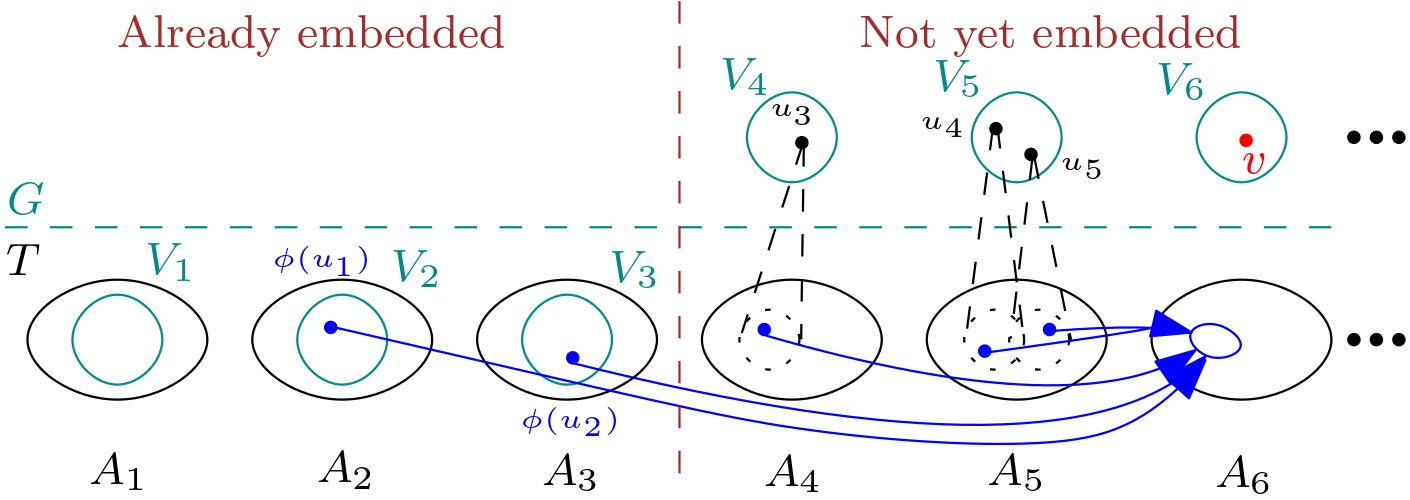}
        \caption{We embed the sets $V_i$ into the sets $A_i$ one by one. At each step we make sure that for all non-embedded sets $V_j$ and all $v \in V_j$ the following holds.
        Let $u_1, \dots, u_{\Delta}$ be the in-neighbors of $v$ such that we have already embedded $u_1, \dots, u_l$ for some $l \leq \Delta$.
        Then, we want that for almost all possible choices for the embedding of the remaining $u_{l+1}, \dots, u_{\Delta}$, we have that $\phi(u_1), \dots, \phi(u_{\Delta})$ have a large common out-neiborhood in $V_j$, i.e., we will have a lot of space to put $v$ into.}
        \label{subfig:embedding_step}
    \end{subfigure}
    \caption{To prove \cref{theorem:main_theorem}, we start by renaming the sets from \cref{theorem:embedding_structure} into $A_1, \dots, A_H$ (\cref{subfig:embedding_renaming}) and then embed the sets $V_1, \dots, V_H$ into the sets $A_1, \dots, A_H$ one by one (\cref{subfig:embedding_step}).}
    \label{fig:main_theorem_proof}
\end{figure}

More specifically, in each iteration, having found $B_i$ and $I_{i-1}$, we first apply dependent random choice to find $B_{i, w}$ such that almost all subsets $S \subseteq B_{i, w}$ of size $w \cdot k$ have many common in-neighbors in $I_{i-1}$.
Then, we iteratively find subsets $K_j \subseteq B_{i, j}$ of size $k$ each, as described above, and set $B_{i, j-1} = N^-(K_j) \cap B_{i, j}$.
Most importantly, every time we make sure that $K_j \cup \dots \cup K_w$ has many common in-neighbors in $I_{i-1}$ --- and that this still holds for most $S \cup K_j \cup \dots \cup K_w$, where $S \subseteq B_{i, j-1}$.
At the end, setting $B_{i-1} = N^-(K_1 \cup \dots \cup K_w) \cap I_{i-1}$ finishes the step.

By repeating the same argument, we can therefore find the sets $B_i$ and $B_{i, j}$ as described --- and what remains to show is that we can in fact embed our $G$ into them.
For that, it is more convenient to get back to our original plan with sets $A_1, \dots, A_H$ --- which as we have seen can be recovered from the $B_{i, j}$'s.
% \YW{Before we said they need to be disjoint, though} \PM{Wait, I don't see where we say that the $A_i$'s are disjoint, there is only the comment that they are "almost disjoint". Do you think this needs more clarification?}.\YW{NVM, you're right}
We want to embed each of the layers $V_1, \dots, V_H$ of $G$ into the respective $A_i$'s --- starting with embedding $V_1$ into $A_1$ and embedding respectively the next $V_i$ in each step.
We have to do it carefully --- when embedding $V_i$ into $A_i$ we not only want to respect the embedding of $V_1, \dots, V_{i-1}$ that we have already found but also guarantee that we will be able to continue the process for $V_{i+1}$, $V_{i+2}$ etc.

So suppose we have found an embedding $\phi$ of the layers $V_1,\dots,V_{i}$ into $A_1,\dots,A_{i}$ and let $v \in V_j$ for $j > i$ be some vertex that we have not embedded yet.
Moreover, let $u_1, \dots, u_\Delta$ be the in-neighbors of $v$ in $G$.
If $u_1, \dots, u_{\Delta} \in V_1 \cup \dots \cup V_i$, that is, we have already embedded all the in-neighbors of $v$, we would like that there are a lot of potential vertices in $V_j$ where we can put $v$.
In other words, we want to make sure that the common out-neighborhood of $\phi(u_1), \dots, \phi(u_\Delta)$ in $A_j$ is large.
Otherwise, if we only already embedded some (possibly empty) part of the in-neighbors of $v$, say $u_1, \dots, u_\ell$ for some $\ell < \Delta$, then we want to make sure that most possible choices of $\phi(u_{\ell+1}), \dots, \phi(u_\Delta)$ give a lot of space to put $v$.
Using the Lov\'asz local lemma in a similar manner to \cite{MR3548291},
%  and \cite{2405.01069}
we will prove that if the above conditions are satisfied, then we will be able to embed $V_{i+1}$, while making sure that these conditions are satisfied again.
Continuing inductively, we are thus able to embed the whole $G$ into $T$.

\vspace{5px}
The remainder of this section is organized as follows.
First in \cref{subsection:preliminaries} we define the concepts used in the proof of \cref{theorem:main_theorem}.
Then, the proof is split into two parts.
In \cref{subsection:structure_theorem}, we first find the sets $B_i$ and $B_{i, j}$ in the host tournament.
Then, in \cref{subsection:embedding_into_structure} we show that we can indeed embed our digraph with bounded graded bandwidth into this structure, thus proving \cref{theorem:main_theorem}.

\subsection{Preliminaries}\label{subsection:preliminaries}
In this section we define various concepts used in the proof of \cref{theorem:main_theorem} --- each in the corresponding subsection --- and prove a couple of simple observations related to them.

\subsubsection{Upward closed sets and \texorpdfstring{$k$}{k}-density}
Throughout the paper, we will often have that almost all subsets of vertices of a given size $k$ have a large common neighborhood in some other set of vertices. In order to more conveniently work with such conditions, we introduce the following simple notions.

\begin{definition}[Upward closed, $k$-density]
    Let $A$ be a set, $1 \leq k \leq |A|$, $0\leq \delta \leq 1$ and ${\cal F} \subseteq 2^A$.
    We say that ${\cal F}$ is \emph{upward closed} if for each $Y \in {\cal F}$ and each $Y \subseteq X \subseteq A$ we have $X \in {\cal F}$.
    We say that ${\cal F}$ has \emph{$k$-density} $\delta$ if $|{\cal F} \cap \binom{A}{k}| = \delta \binom{|A|}{k}$.
\end{definition}

The main property of upward closed sets is that knowing that their $k$-density is at most $\delta$ immediately tells us that for each $1 \leq i \leq k$ their $i$-density is at most $\delta$ too.
This will be useful for us, since it immediately implies, that the probability that a random set of $k$ elements of $A$ drawn uniformly at random with replacement has probability at most $\delta$ to be ``bad".

\begin{observation}[Upward closed property]\label{observation:up_closed_density}
    Let $A$ be a set, $1 \leq k \leq |A|$ and ${\cal F} \subseteq 2^A$ be upward closed with $k$-density at most $\delta$.
    Then ${\cal F}$ has $i$-density at most $\delta$ for each $1 \leq i \leq k$.
\end{observation}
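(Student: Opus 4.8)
The statement to prove is \cref{observation:up_closed_density}: if $\mathcal F \subseteq 2^A$ is upward closed with $k$-density at most $\delta$, then for every $1 \leq i \leq k$ its $i$-density is at most $\delta$. The plan is to prove the single-step inequality that the $(i-1)$-density is at most the $i$-density whenever $\mathcal F$ is upward closed, and then iterate downward from $k$ to the desired value of $i$. So it suffices to show: if $1 \leq i \leq |A|$, then $|\mathcal F \cap \binom{A}{i-1}| / \binom{|A|}{i-1} \leq |\mathcal F \cap \binom{A}{i}| / \binom{|A|}{i}$.

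\textbf{Key step.} To prove this, I would use a standard double-counting / Kruskal--Katona-flavored shadow argument. Consider the bipartite ``containment'' graph between $\mathcal F \cap \binom{A}{i-1}$ and $\mathcal F \cap \binom{A}{i}$, where an $(i-1)$-set $Y$ is joined to an $i$-set $X$ whenever $Y \subseteq X$; note that upward closure guarantees that whenever $Y \in \mathcal F$ and $Y \subseteq X$ we have $X \in \mathcal F$, so \emph{every} superset of a set in $\mathcal F \cap \binom{A}{i-1}$ of size $i$ lies in $\mathcal F \cap \binom{A}{i}$. Now count the edges of this graph in two ways. Each $Y \in \mathcal F \cap \binom{A}{i-1}$ has exactly $|A| - (i-1)$ supersets of size $i$, and by upward closure all of them lie in $\mathcal F \cap \binom{A}{i}$; hence the number of edges is exactly $(|A|-i+1)\,|\mathcal F \cap \binom{A}{i-1}|$. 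On the other hand, each $X \in \mathcal F \cap \binom{A}{i}$ has exactly $i$ subsets of size $i-1$, so the number of edges is at most $i\,|\mathcal F \cap \binom{A}{i}|$. Combining,
\[
  (|A|-i+1)\,\Bigl|\mathcal F \cap \tbinom{A}{i-1}\Bigr| \;\leq\; i\,\Bigl|\mathcal F \cap \tbinom{A}{i}\Bigr|,
\]
and dividing both sides by $\binom{|A|}{i-1}\cdot(|A|-i+1) = i\binom{|A|}{i}$ yields exactly the claimed inequality between $(i-1)$-density and $i$-density.

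\textbf{Conclusion.} Applying this inequality repeatedly for the indices $k, k-1, \dots, i+1$ gives
\[
  \text{$(i)$-density} \;\leq\; \text{$(i+1)$-density} \;\leq\; \dots \;\leq\; \text{$k$-density} \;\leq\; \delta,
\]
which is the desired conclusion. (One should note the harmless edge case: if $\binom{A}{i}$ or $\binom{A}{i-1}$ is empty the densities are $0$ and there is nothing to check; and the hypothesis $1 \le k \le |A|$ ensures all binomial coefficients in play are positive.)

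\textbf{Main obstacle.} There is essentially no obstacle here — this is a short and standard averaging argument. The only point requiring a moment's care is making sure the upward-closure hypothesis is used in exactly the right place, namely to assert that \emph{all} size-$i$ supersets of a set in $\mathcal F$ are again in $\mathcal F$ (giving the clean equality $(|A|-i+1)|\mathcal F \cap \binom{A}{i-1}|$ for the edge count from the lower side, rather than merely an inequality). Everything else is bookkeeping with binomial coefficients.
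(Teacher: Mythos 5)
Your proof is correct, and it rests on the same underlying mechanism as the paper's: double-counting containment pairs and using upward closure to say that every superset of a member of $\mathcal F$ is again in $\mathcal F$. The only difference is organizational. You prove the one-step monotonicity statement that for an upward closed family the $(i-1)$-density is at most the $i$-density, via the bipartite count $(|A|-i+1)\,|\mathcal F \cap \binom{A}{i-1}| \leq i\,|\mathcal F \cap \binom{A}{i}|$, and then iterate from level $k$ down to level $i$; the paper instead does a single double count directly between levels $i$ and $k$, pairing each $X \in \mathcal F \cap \binom{A}{k}$ with its $\binom{k}{i}$ subsets of size $i$ and each $Y \in \mathcal F \cap \binom{A}{i}$ with its $\binom{|A|-i}{k-i}$ supersets of size $k$, which gives the bound $\delta\binom{|A|}{i}$ in one shot. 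Your version yields the marginally stronger fact that the level densities of an upward closed family are monotone non-decreasing, at the cost of an (entirely harmless) induction; the paper's version is a one-paragraph direct computation. Both are complete, and your treatment of the edge cases (positivity of the binomial coefficients under $1 \leq k \leq |A|$) is fine.
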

\begin{proof}
    For $Y \subseteq A$ with $|Y| = i$, $Y \in {\cal F}$ implies $X \in {\cal F}$ for all $\binom{|A|-i}{k-i}$ $k$-subsets of $X$ containing $Y$.
    Moreover, the number of pairs consisting of an $X \in {\cal F}$ of size $k$ and a $Y \subseteq X$ of size $i$ is at most $\delta \binom{|A|}{k} \binom{k}{i}$.
    Therefore, the number of $Y \in {\cal{F}}$ of size $i$ is at most
    \[
        \frac{\delta \binom{|A|}{k} \binom{k}{i}}{\binom{|A| - i}{k-i}} = \delta \binom{|A|}{i}.\qedhere
    \]
\end{proof}

\subsubsection{Communities}
%\YW{Should $B$ be $R$ in this paragraph?}
In the proof of \cref{theorem:main_theorem}, we will make use of the dependent random choice technique --- which given two sets $L$ and $R$ with positive edge-density between them, will allow us to find a subset $A \subseteq L$ such that all but a few subsets $S \subseteq A$ of size $\Delta$ have more than $s$ common neighbors in $R$.
Moreover, at later stages, we will want to have some elements $C$  of $A$ fixed --- for example because we embedded some vertices of $G$ into them --- while still wanting that for almost all $S \subseteq A$ of size $\Delta'$ the set $C \cup S$ has a large common neighborhood in $R$.
To increase readability and avoid writing out the whole ``all but at most $e$ subsets..." every time, we define the notion of a \emph{community}.
\begin{definition}
    Let $G$ be a graph, $A, C, R \subseteq V(G)$.
    We say that $(A, C)$ is an \emph{$(R, \Delta, s, e)$-community} if $|N(C) \cap R| \geq s+1$, $|A\setminus C| \geq \Delta$ and for all but at most $e$ subsets $S \subseteq A \setminus C$ of size $|S| = \Delta$ we have $|N(C \cup S) \cap R| \geq s+1$.
\end{definition}
\noindent Note that for the first example above, we would write $C = \emptyset$, i.e., that $(A, \emptyset)$ is an $(R, \Delta, s, e)$-community.
Notice also that in case $\Delta = 0$, this definition simply means that $|N(C) \cap R| \geq s+1$.

For digraphs, we define the notion of community analogously, this time distinguishing between the in- and out-neighborhood.
\begin{definition}
    Let $T$ be a digraph and $A, C, R \subseteq V(T)$.
    We say that $(A, C)$ is  an \emph{$(R, \Delta, s, e)$-in-community} (respectively \emph{$(R, \Delta, s, e)$-out-community}) if $|N^-(C) \cap R| \geq s+1$ (respectively $|N^+(C) \cap R| \geq s+1$), $|A\setminus C| \geq \Delta$ and for all but at most $e$ subsets $S \subseteq A \setminus C$ of size $|S| = \Delta$ we have $|N^-(C \cup S) \cap R| \geq s+1$ (respectively $|N^+(C \cup S) \cap R| \geq s+1$).
\end{definition}

By definition, we can take any subset $A' \subseteq A$ and remain a community with the same parameters.
\begin{observation}\label{observation:community_subset}
    If $(A, C)$ is an $(R, \Delta, s, e)$-community then $(A', C)$ is a $(R, \Delta, s, e)$-community for any $A' \subseteq A$ such that $|A' \setminus C| \ge \Delta$.
\end{observation}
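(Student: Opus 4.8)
The plan is simply to check, one clause at a time, the three conditions in the definition of an $(R, \Delta, s, e)$-community for the pair $(A', C)$, observing that replacing $A$ by a subset $A'$ makes each condition no harder to satisfy. The first condition, $|N(C) \cap R| \ge s+1$, refers only to $C$ and $R$ and is untouched, so it holds because $(A,C)$ is an $(R,\Delta,s,e)$-community. The second condition, $|A' \setminus C| \ge \Delta$, is exactly the hypothesis imposed on $A'$ (and is the reason that hypothesis is included in the statement).

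The only condition that requires a word of justification is the third: that all but at most $e$ of the $\Delta$-subsets $S \subseteq A' \setminus C$ satisfy $|N(C \cup S) \cap R| \ge s+1$. Here I would use that $A' \subseteq A$ forces $A' \setminus C \subseteq A \setminus C$, so every $\Delta$-subset of $A' \setminus C$ is in particular a $\Delta$-subset of $A \setminus C$. Consequently the ``bad'' $\Delta$-subsets of $A' \setminus C$ — those $S$ with $|N(C \cup S) \cap R| \le s$ — form a subfamily of the bad $\Delta$-subsets of $A \setminus C$, and the latter has size at most $e$ by the hypothesis that $(A,C)$ is a community; hence there are at most $e$ bad $\Delta$-subsets of $A' \setminus C$ as well. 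Together with the first two conditions this shows $(A',C)$ is an $(R,\Delta,s,e)$-community.

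There is essentially no obstacle: the statement is immediate from the definition, and the identical argument handles the in-community and out-community variants for digraphs, replacing $N$ by $N^-$ or $N^+$ throughout. The one point to be careful about is that the assumption $|A' \setminus C| \ge \Delta$ is genuinely needed, both to supply the second clause directly and to ensure that $\Delta$-subsets of $A' \setminus C$ exist at all.
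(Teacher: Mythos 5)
Your proof is correct and matches the paper's treatment: the paper states this observation without proof, regarding it as immediate from the definition, and your verification (the first clause is unaffected, the second is the hypothesis, and the bad $\Delta$-subsets of $A'\setminus C$ form a subfamily of those of $A\setminus C$) is exactly the intended argument.
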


As described above, we will often want to fix some subset $C \subseteq A$ and still get that almost all $C \cup S$ have a large common neighborhood in $R$.
For example, as we will be embedding the vertices of our digraph $G$ in stages, for some vertex $v \in V(G)$ the set $C$ will be the set of in-neighbors of $v$ that we have already embedded.
Importantly, as we keep embedding the vertices of $G$, this set $C$ will grow, and at each stage we will want that there are many possible ways to pick a $S' \subseteq A$ such that $(A, C \cup S')$ is still a good community.
The following observation allows us to achieve exactly that.
\begin{observation}\label{observation:community_extension_density}
    Let $G$ be a graph and $A, C, R \subseteq V(G)$ be subsets of its vertices.
    Moreover, let $\Delta_1, \Delta_2, m, s \in \mathbb{N}$ such that $\Delta_1 + \Delta_2 \leq m \leq |A \setminus C|$ and $\delta_1, \delta_2 \ge 0.$
    Suppose that $(A, C)$ is an $(R, \Delta_1 + \Delta_2, s, \delta_1 \delta_2 \binom{m}{\Delta_1 + \Delta_2})$-community.
    Let ${\cal F}$ be the set of subsets $S' \subseteq A \setminus C$ such that $(A, C \cup S')$ is not an $(R, \Delta_2, s, \delta_2 \binom{m-\Delta_1}{\Delta_2})$-community.
    Then ${\cal F}$ is upward closed and $|{\cal F} \cap \binom{A\setminus C}{\Delta_1}| \leq \delta_1 \binom{m}{\Delta_1}$.
    In particular, ${\cal F}$ has $\Delta_1$-density at most $\delta_1$.
\end{observation}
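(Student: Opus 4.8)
The statement has two essentially independent parts: that ${\cal F}$ is upward closed, and the density estimate $|{\cal F}\cap\binom{A\setminus C}{\Delta_1}|\le\delta_1\binom{m}{\Delta_1}$, from which the ``in particular'' is immediate since $m\le|A\setminus C|$ gives $\binom{m}{\Delta_1}\le\binom{|A\setminus C|}{\Delta_1}$. I would prove the density estimate by double counting ``bad pairs'': call a pair of disjoint sets $S'\in\binom{A\setminus C}{\Delta_1}$ and $S''\in\binom{A\setminus C}{\Delta_2}$ \emph{bad} if $|N(C\cup S'\cup S'')\cap R|\le s$. First, fix $S'\in{\cal F}\cap\binom{A\setminus C}{\Delta_1}$. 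Its complement $(A\setminus C)\setminus S'$ has size $|A\setminus C|-\Delta_1\ge m-\Delta_1\ge\Delta_2$, so the second community condition for $(A,C\cup S')$ holds automatically, and hence $(A,C\cup S')$ fails to be an $(R,\Delta_2,s,\delta_2\binom{m-\Delta_1}{\Delta_2})$-community either because $|N(C\cup S')\cap R|\le s$ --- in which case every $\Delta_2$-subset $S''$ of $(A\setminus C)\setminus S'$ satisfies $|N(C\cup S'\cup S'')\cap R|\le s$, i.e.\ completes $S'$ to a bad pair --- or because more than $\delta_2\binom{m-\Delta_1}{\Delta_2}$ such $S''$ do so. Either way $S'$ appears in at least $\delta_2\binom{m-\Delta_1}{\Delta_2}$ bad pairs, and distinct choices of $S'$ contribute disjoint families of bad pairs. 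Second, each bad pair $(S',S'')$ determines the bad $(\Delta_1+\Delta_2)$-set $S'\cup S''$ of $A\setminus C$ (i.e.\ a set $S$ with $|N(C\cup S)\cap R|\le s$) together with one of its $\binom{\Delta_1+\Delta_2}{\Delta_1}$ many $\Delta_1$-subsets, and by hypothesis there are at most $\delta_1\delta_2\binom{m}{\Delta_1+\Delta_2}$ bad $(\Delta_1+\Delta_2)$-sets. Comparing the two counts, using the identity $\binom{m}{\Delta_1+\Delta_2}\binom{\Delta_1+\Delta_2}{\Delta_1}=\binom{m}{\Delta_1}\binom{m-\Delta_1}{\Delta_2}$, and dividing by $\delta_2\binom{m-\Delta_1}{\Delta_2}$ gives the claimed bound (the degenerate case $\delta_2=0$, in which the hypothesis forces every such $(A,C\cup S')$ to be a community, being handled directly).

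For upward-closedness, let $S'\in{\cal F}$ and $S'\subseteq X\subseteq A\setminus C$. Two of the three community conditions are monotone under passing from $S'$ to $X$: $N(C\cup X)\subseteq N(C\cup S')$ and $A\setminus(C\cup X)\subseteq A\setminus(C\cup S')$, so if $(A,C\cup S')$ already fails the condition $|N(C\cup S')\cap R|\ge s+1$ or the condition $|A\setminus(C\cup S')|\ge\Delta_2$, then so does $(A,C\cup X)$, and $X\in{\cal F}$. This leaves the case in which $(A,C\cup S')$ fails only the third condition. Assume for contradiction that $(A,C\cup X)$ is a community. For each $\Delta_2$-subset $S''$ of $(A\setminus C)\setminus S'$ with $|N(C\cup S'\cup S'')\cap R|\le s$, set $Q:=S''\setminus X$; since $C\cup S'\cup(S''\cap X)\subseteq C\cup X$ we obtain $|N(C\cup X\cup Q)\cap R|\le|N(C\cup S'\cup S'')\cap R|\le s$, and $Q\ne\emptyset$ because otherwise $(A,C\cup X)$ would violate its first condition. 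Hence $Q$ is a nonempty subset of $A\setminus(C\cup X)$ of size at most $\Delta_2$ with $|N(C\cup X\cup Q)\cap R|\le s$, so every $\Delta_2$-sized superset of $Q$ inside $A\setminus(C\cup X)$ is ``bad'' for $(A,C\cup X)$; counting these bad $\Delta_2$-sets against the third-condition bound $\delta_2\binom{m-\Delta_1}{\Delta_2}$ for $(A,C\cup X)$ should produce the desired contradiction.

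I expect this last step to be the main obstacle. The $\Delta_2$-sets that the third community condition for $(A,C\cup X)$ constrains are indexed by subsets of the \emph{smaller} ground set $A\setminus(C\cup X)$, so the bad $S''$ for $S'$ that intersect $X\setminus S'$ have no direct analogue for $X$; the map $S''\mapsto Q=S''\setminus X$ is both size-decreasing and highly non-injective (one short $Q$ can arise from many $S''$ and extend to many $\Delta_2$-sets), so closing the argument requires carefully bounding how many bad $\Delta_2$-subsets of $A\setminus(C\cup X)$ these $Q$ actually force, possibly using the relation between $\delta_2\binom{m-\Delta_1}{\Delta_2}$ and the parameters $|A\setminus(C\cup X)|$ and $\Delta_2$. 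The double-counting for the density bound, by contrast, is routine once the bad-pair dictionary and the binomial identity are in place.
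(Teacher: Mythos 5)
The counting half of your proposal is correct and is essentially the paper's own argument: the paper runs the same double count of bad pairs $(S',S'')$ in contrapositive form, using exactly the identity $\binom{m}{\Delta_1+\Delta_2}\binom{\Delta_1+\Delta_2}{\Delta_1}=\binom{m}{\Delta_1}\binom{m-\Delta_1}{\Delta_2}$, and your handling of the degenerate cases is if anything more explicit. (One small remark: in your case where $|N(C\cup S')\cap R|\le s$, the guaranteed number of bad pairs is $\binom{|A\setminus C|-\Delta_1}{\Delta_2}\ge\binom{m-\Delta_1}{\Delta_2}$, which is at least $\delta_2\binom{m-\Delta_1}{\Delta_2}$ only when $\delta_2\le 1$; the paper's proof makes the same implicit assumption, and $\delta_2\le1$ holds in every application.)

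The upward-closedness half is where your proposal stops, and the obstacle you isolated is genuine: the claim is false in the stated generality, so no refinement of your counting of the remainders $Q=S''\setminus X$ will close it. For a concrete counterexample, take $\Delta_1=1$, $\Delta_2=2$, $m=n=|A\setminus C|$ large, $t=\lceil\sqrt n\rceil$, $\delta_2=t/n$, $\delta_1=4t/n$. Let $R=R_0\cup\{\rho_1,\dots,\rho_t\}$ with $|R_0|=s$, let $C=\{c\}$ and $A\setminus C=Z\cup U$ with $Z=\{z_1,\dots,z_t\}$, join $c$ and every vertex of $U$ to all of $R$, and join $z_i$ to $R_0\cup\{\rho_i\}$. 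Then a subset of $A$ has at most $s$ common neighbors in $R$ exactly when it contains two vertices of $Z$. For $n$ large one checks: $(A,C)$ is an $(R,3,s,\delta_1\delta_2\binom n3)$-community (there are at most $\binom t2 n$ bad triples, while $\delta_1\delta_2\binom n3\approx\tfrac23 t^2n$); the set $S'=\{z_1\}$ lies in ${\cal F}$, since it has $\binom{n-1}2-\binom{n-t}2\approx(t-1)n$ bad $2$-extensions, exceeding $\delta_2\binom{n-1}2\approx tn/2$; but $X=\{z_1\}\cup U\supseteq S'$ is \emph{not} in ${\cal F}$, because $|N(C\cup X)\cap R|=s+1$, $|A\setminus(C\cup X)|=t-1\ge 2$, and the bad $2$-subsets of $A\setminus(C\cup X)=Z\setminus\{z_1\}$ number only $\binom{t-1}2\le\delta_2\binom{n-1}2$. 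The paper's proof dismisses upward closedness in one sentence from $N(X\cup Y)\subseteq N(X)$; that shows each bad extension of $S'$ still has small common neighborhood after enlarging $C\cup S'$ to $C\cup X$, but not that it is a $\Delta_2$-subset of the smaller ground set $A\setminus(C\cup X)$ over which the third community condition for $X$ is counted --- precisely the issue you identified. So you have located a gap in the paper rather than missed a trick.

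The gap is harmless for the way the observation is used, which is worth knowing so you do not keep hunting for a proof of literal upward closedness. Its only essential use is to pass, via \cref{observation:up_closed_density}, from the $\Delta_1$-density of ${\cal F}$ to its $i$-density for all $i<\Delta_1$, because the set $K$ in \cref{lemma:dependent_random_choice} is sampled with repetition (see the application in \cref{lemma:backward_step}); elsewhere, e.g.\ in the embedding step of \cref{theorem:main_theorem}, only the exact-level density is ever needed. Those $i$-density bounds can be proved directly by repeating your bad-pair double count with $i$ in place of $\Delta_1$: the thresholds no longer match exactly, producing correction factors of the form $\binom{m-i}{\Delta_2}/\binom{m-\Delta_1}{\Delta_2}$, which are $O(1)$ in the paper's parameter ranges and are absorbed by its generous constants. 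Equivalently, one can work throughout with the upward closure of ${\cal F}$, whose level-$i$ densities are bounded by the same argument.
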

\begin{proof}
    %\YW{$S$ is kind of used for two things in this proof (it's used in the proof, but also $F$ was defined as a family of $S$'s)}
    Since for any $X, Y \subseteq V(G)$ we have that $N(X \cup Y) \subseteq N(X)$ we immediately get that ${\cal F}$ is upward closed.
    Suppose now for contradiction that $|{\cal{F}}\cap \binom{A\setminus C}{\Delta_1}| > \delta_1 \binom{m}{\Delta_1}$.
    Then the number of subsets $S \subseteq A \setminus C$ of size $\Delta_1 + \Delta_2$ with $|N(C \cup S) \cap R| \leq s$ is larger than
    \[
         \delta_1 \delta_2 \frac{\binom{m}{\Delta_1}\binom{m-\Delta_1}{\Delta_2}}{\binom{\Delta_1 + \Delta_2}{\Delta_1}} = \delta_1 \delta_2 \binom{m}{\Delta_1 + \Delta_2},
    \]
    which is a contradiction to $(A, C)$ being an $(R, \Delta_1 + \Delta_2, s, \delta_1 \delta_2 \binom{m}{\Delta_1 + \Delta_2})$-community.
    The $\Delta_1$-density of ${\cal F}$ then follows from $m \leq |A \setminus C|$.
\end{proof}
\begin{remark}
    Observations \ref{observation:community_subset} and \ref{observation:community_extension_density} hold for in- and out-communities as well.
\end{remark}
\subsubsection{Median Order}
Finally, an important ingredient for finding the embedding structure in $T$ will be the median order \cite{MR1791347} of the host tournament $T$.
\begin{definition}
    Let $T$ be a tournament on $N$ vertices.
    An ordering $v_1, \dots, v_N$ of the vertices of $T$ is called a \emph{median order} if it maximizes the number of forward edges, i.e., pairs $i < j$ with $v_iv_j \in E(T)$ among all possible orderings of the vertices of $T$.
    Given a median order $v_1, \dots, v_N$ and $1 \leq i < j \leq N+1$ we write $[i, j)$ for $\{v_i, v_{i+1}, \dots, v_{j-1}\}$.
\end{definition}
\noindent The median order is an elementary, yet powerful tool in the study of tournaments and has for example been used in the recent work around Sumner's conjecture; we refer the reader to \cite{MR1791347} for more information.

We will use the median order to, starting with some set $B$ contained in a given interval $I$ of the median order, find another interval $I'$ of size roughly the same as $I$ such that many edges go from $I'$ to $B$.
This is implied by the following standard observation about the median order.
\begin{observation}\label{observation:median_order}
    Let $T$ be a tournament on $N$ vertices with a median order $v_1, \dots, v_N$.
    Let $j, k, a \in \mathbb{N}$ satisfy $j - ka \geq 1$ and $j + a + 1 \leq N$.
    Then, for $I = [j - ka, j)$ and any $A \subseteq [j, j+a)$ we have $d(I, A) \geq \frac{k-1}{2k}$.
\end{observation}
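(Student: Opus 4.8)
The plan is to reduce the statement to the single ``local'' property of median orders: if $v_1,\dots,v_N$ is a median order and $1 \le p \le q \le N$, then $v_q$ is dominated by at least half of the vertices $v_p,\dots,v_{q-1}$, i.e.\ $d^-(v_q,\{v_p,\dots,v_{q-1}\}) \ge (q-p)/2$. First I would prove this by a one-step exchange argument: move $v_q$ from position $q$ to the position just before $v_p$, shifting $v_p,\dots,v_{q-1}$ one step to the right and leaving all other vertices fixed. The only pairs whose relative order changes are the pairs $(v_q,v_i)$ with $p \le i \le q-1$; this move converts the $d^-(v_q,\{v_p,\dots,v_{q-1}\})$ forward edges of this type into backward edges and the $d^+(v_q,\{v_p,\dots,v_{q-1}\})$ backward edges into forward ones, so the net change in the number of forward edges is $d^+(v_q,\{v_p,\dots,v_{q-1}\}) - d^-(v_q,\{v_p,\dots,v_{q-1}\})$. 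Since $v_1,\dots,v_N$ maximizes the number of forward edges this quantity is $\le 0$, and as $d^+ + d^- = q-p$ we get $d^-(v_q,\{v_p,\dots,v_{q-1}\}) \ge (q-p)/2$.

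Next I would estimate $e(I,A)$ vertex by vertex over $A$. Fix $v_r \in A$, so $j \le r \le j+a-1$; the hypotheses $j - ka \ge 1$ and $j + a + 1 \le N$ guarantee that both indices $j-ka$ and $r$ are valid, so the local property applies with $p = j-ka$ and $q = r$. It gives that $v_r$ has at least $\big(r-(j-ka)\big)/2 = (ka + r - j)/2$ in-neighbours among $\{v_{j-ka},\dots,v_{r-1}\}$. This index range is the disjoint union of $I = \{v_{j-ka},\dots,v_{j-1}\}$ and $\{v_j,\dots,v_{r-1}\}$, the latter having only $r-j \le a-1$ vertices, so
\[
    d^-(v_r, I) \ \ge\ \frac{ka + r - j}{2} - (r-j) \ =\ \frac{ka - (r-j)}{2} \ \ge\ \frac{ka - (a-1)}{2} \ \ge\ \frac{(k-1)a}{2}.
\]

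Summing this bound over all $v_r \in A$ yields $e(I,A) = \sum_{v_r \in A} d^-(v_r, I) \ge |A|\cdot (k-1)a/2$. Since $|I| = ka$, we conclude that $d(I,A) = e(I,A)/(|I|\,|A|) \ge \tfrac{(k-1)a/2}{ka} = \tfrac{k-1}{2k}$, which is exactly the claim. There is no substantive obstacle here; the only things to be careful about are the bookkeeping that separates the in-neighbours of $v_r$ lying in $I$ from the at most $a-1$ that may lie in the initial segment $\{v_j,\dots,v_{r-1}\}$ of $A$, and keeping the index inequalities honest so that the exchange argument is legitimate for every $v_r \in A$.
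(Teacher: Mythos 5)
Your proof is correct and follows essentially the same route as the paper: bound $d^-(v_r,I)$ for each $v_r \in A$ by the median-order property applied to the interval $[j-ka, r)$, subtract the at most $r-j \le a-1$ in-neighbours lying in $[j,r)$, and sum over $A$. The only difference is that you also verify the half-domination property of median orders via the exchange argument, which the paper simply invokes as a standard fact.
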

\begin{proof}
    Notice that since $v_1, \dots, v_N$ is a median order, at least half of the vertices from $[j - ka, i)$ are in-neighbors of $v_i$ for each $j \leq i < j+a$.
    Therefore, fore every $v_i \in [j, j+a)$, we have
    \[
        d^-(v_i, I) = d^-(v_i, [j - ka, i)) - d^-(v_i, [j, i)) \geq \frac{ka + i - j}{2} - (i - j) = \frac{ka - (i-j)}{2} \geq \frac{(k-1)a}{2}.
    \]
    In particular, since $A \subseteq [j, j+a)$, we get
    \[
        d(I, A) = \frac{\sum_{v \in A} d^-(v, I)}{ka \cdot |A|} \geq  \frac{|A| (k-1) a}{2ka \cdot |A|} = \frac{k-1}{2k}.\qedhere
    \]
\end{proof}

\subsection{Finding the embedding structure in the host tournament}\label{subsection:structure_theorem}

In this section, we will find the embedding structure in the host tournament $T$.
We first state \cref{theorem:embedding_structure}, which formally defines this structure and says that we can indeed find it.
In our application, the parameters $\Delta$ and $w$ will be the maximum degree and the graded bandwidth of our digraph $G$, $h$ will be chosen such that $h \cdot w \approx H$, while $s_1, \dots, s_h$ will roughly correspond to the sizes of the layers $V_1, \dots, V_H$ of $G$: $s_1$ will correspond to the maximum among $|V_1|, \dots, |V_w|$, $s_2$ will correspond to the maximum among $|V_{w+1}|, \dots, |V_{2w}|$, and so on.

\begin{theorem}\label{theorem:embedding_structure}
    Let $\Delta, w, h, s_1, \dots, s_h \in \mathbb{N}$, such that $s_i \geq \max\{\frac{s_{i-1}}{2}, \frac{s_{i+1}}{2}\}$ for each $i$, and let $0< \delta <1$.
    For each $i \in [h]$ define:
    \begin{itemize}
        \item $m_i = \delta^{-\frac{1}{\Delta}} 3^{30 \Delta w} s_i$,
        \item $b_i = 3^{6\Delta w} m_i$, \emph{and}
        \item $a_i = 3^{10\Delta w} b_i$.
    \end{itemize}
    Let $T$ be a tournament on $N \geq \sum_{i=1}^h 6a_i$ vertices.
    Then there exist disjoint $B_1, \dots, B_h \subseteq V(T)$ and $B_{i,0} \subseteq \dots \subseteq B_{i, w} \subseteq B_i$ for each $i \in [h]$ such that the following holds. 
    For each $i \in [h]$ and $j \in [w]$ we have that
    \begin{enumerate}
        \item $m_i \leq |B_{i,j}| \leq b_i$, \emph{and}
        \item $(B_{i-1} \cup B_{i, j-1}, \emptyset)$ is a $(B_{i,j}, \Delta, s_i, \delta (\frac{s_i}{2b_i})^{\Delta} \binom{m_i/3}{\Delta})$-out-community.
    \end{enumerate}
\end{theorem}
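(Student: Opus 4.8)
The plan is to construct the sets $B_1, \dots, B_h$ and their nested subfamilies $B_{i,0} \subseteq \dots \subseteq B_{i,w} \subseteq B_i$ by a downward induction on $i$, starting from $i = h$ and proceeding to $i = 1$, exactly as sketched in \cref{subsection:proof_outline}. At the start of step $i$ (having already built $B_{i+1}, \dots, B_h$ together with all their subfamilies), we will additionally have recorded an interval $I_i = [\ell_i, r_i)$ of the median order of $T$ with $B_i \subseteq I_i$, $|I_i| \le a_i$, and $I_i$ located to the left of all previously chosen intervals, with enough room (a factor like $6$) so that the total length budget $\sum 6 a_i \le N$ is never exceeded. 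For the base case $i = h$ we simply take $I_h$ to be any interval of length $a_h$ at the far right of the median order, and let $B_h \coloneqq I_h$; the internal subfamilies $B_{h,0} \subseteq \dots \subseteq B_{h,w}$ will be produced by the same mechanism as in the inductive step (with the convention that the out-community condition involving $B_{h-1}$ is vacuous until $B_{h-1}$ is created). Setting $A_{iw+j} = B_{i+1,j}$ then recovers the informal picture, as noted after the theorem statement.

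The heart of the argument is the inductive step, which should be isolated as a lemma (this is \cref{lemma:backward_step} in the figure). Given $B_i \subseteq I_i$ with $m_i \le |B_i|$, we first invoke \cref{observation:median_order}: choosing $I_{i-1}$ to be an interval of length $\approx a_{i-1} \approx a_i$ ending just to the left of $I_i$, the observation (with $k$ a constant such as $3$ or larger) guarantees $d(I_{i-1}, B_i) \ge \frac{k-1}{2k} \ge \frac13$, i.e.\ a constant fraction of the pairs between $I_{i-1}$ and $B_i$ are edges from $I_{i-1}$ into $B_i$. Now we run dependent random choice a total of $w+1$ times inside this bipartite pair, each time in the modified form described in the outline. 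First: sample a random $(wk)$-tuple $K \subseteq B_i$ and pass to $B_{i,w} \coloneqq N^-(K) \cap B_i$ inside $I_{i-1}$'s neighborhood — this step is designed so that almost all $(wk)$-subsets of $B_{i,w}$ have many common in-neighbors in $I_{i-1}$, which is exactly the reservoir we need to control $|B_{i-1}|$ at the end. Then, for $j = w, w-1, \dots, 1$ in turn, sample a random $k$-tuple $K_j \subseteq B_{i,j}$, set $B_{i,j-1} \coloneqq N^-(K_j) \cap B_{i,j}$, and simultaneously maintain as an invariant that $K_j \cup \dots \cup K_w$ (together with most small subsets $S \subseteq B_{i,j-1}$) has many common in-neighbors in $I_{i-1}$. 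The key point of the modified dependent random choice is that the set we sample $K_j$ from is $B_{i,j}$, but the "target" neighborhood whose density we track is in $B_{i,j}$ as well — so a random $\Delta$-subset of $B_{i-1} \cup B_{i,j-1}$ inherits, via the fixed vertices $K_j$ acting through $B_{i-1} = N^-(K_1 \cup \dots \cup K_w) \cap I_{i-1}$, a large common out-neighborhood in $B_{i,j}$. Finally set $B_{i-1} \coloneqq N^-(K_1 \cup \dots \cup K_w) \cap I_{i-1}$ and truncate it to lie inside $I_{i-1}$; the reservoir step ensures $|B_{i-1}| \ge m_{i-1}$, and a Markov-type bound ensures we can also take $|B_{i-1}| \le b_{i-1}$ by discarding excess vertices (using \cref{observation:community_subset}, which lets us shrink a community freely).

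The bookkeeping that makes everything fit is the chain of parameters $m_i = \delta^{-1/\Delta} 3^{30\Delta w} s_i$, $b_i = 3^{6\Delta w} m_i$, $a_i = 3^{10\Delta w} b_i$, and $s_i \ge \max\{s_{i-1}/2, s_{i+1}/2\}$. The near-equality of consecutive $s_i$'s (up to a factor $2$) is what allows the interval lengths $a_i$ to stay comparable as $i$ decreases, so that \cref{observation:median_order} with a constant $k$ always applies and the intervals never collide. Each dependent random choice application loses a factor that is polynomial in the relevant degree (bounded by $3^{O(\Delta w)}$ per application, with $w+1$ applications per step), and these losses are precisely what the gap between $a_i$, $b_i$, $m_i$ is budgeted to absorb; the final community parameter $\delta(\tfrac{s_i}{2b_i})^\Delta \binom{m_i/3}{\Delta}$ is the product of the error probabilities accumulated over the at most $w$ nested steps, together with the initial density $\tfrac{s_i}{2b_i}$-type loss from passing through the reservoir. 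I expect the main obstacle to be the modified dependent random choice lemma itself: one has to argue that a single random $k$-set $K_j$ simultaneously (a) pulls back a large set $B_{i,j-1}$, (b) keeps most $\Delta$-subsets of $B_{i-1} \cup B_{i,j-1}$ well-connected into $B_{i,j}$, and (c) preserves the reservoir invariant in $I_{i-1}$ — so the expectation computation has to control three bad events at once, and making the quantitative bounds close under $w$-fold iteration (without the error probabilities blowing up) is the delicate part. Everything else — the median-order step, the size truncations, and the final renaming into $A_1, \dots, A_H$ — is routine given the conventions set up in \cref{subsection:preliminaries}.
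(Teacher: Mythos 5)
Your proposal follows essentially the same route as the paper: a downward induction along the median order in which each step (the paper's Lemma \ref{lemma:backward_step}) first runs a ``reservoir'' dependent random choice between $I_{i-1}$ and $B_i$, then performs $w$ iterated applications of the modified dependent random choice (Lemma \ref{lemma:dependent_random_choice}) avoiding an upward-closed family so that the in-community invariant into $I_{i-1}$ is preserved, and finally sets $B_{i-1}=N^-(K_1\cup\dots\cup K_w)\cap I_{i-1}$, with the same parameter bookkeeping. The only slip is directional and easily fixed: in the reservoir step the $(2\Delta w)$-set $K$ is sampled from $I_{i-1}$ and $B_{i,w}$ is its common out-neighborhood $N^+(K)\cap B_i$ (sampling $K\subseteq B_i$ and taking $N^-(K)\cap B_i$, as you wrote, would not by itself yield the stated property that almost all medium-sized subsets of $B_{i,w}$ have many common in-neighbors in $I_{i-1}$).
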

We remark that in \cref{theorem:embedding_structure}, we have $\frac{s_i}{b_i}=\delta^{1/\Delta}3^{-36\Delta w}$ for all $i$. Thus, the final conclusion of the theorem could be rewritten to not involve the quantity $\frac{s_i}{2b_i}$, as this quantity is the same for all $i$. 

Before giving a proof of \cref{theorem:embedding_structure} in \cref{subsection:proof_of_structure_theorem}, in \cref{subsection:backward_step} we first state and prove \cref{lemma:backward_step}, which will allow us to find one layer of the structure at a time.

\subsubsection{Extending the structure by one layer}\label{subsection:backward_step}
The main tool while finding the next layer of our structure will be a dependent random choice argument, which is the following \cref{lemma:dependent_random_choice}.
As described before, given some sets $A$, $L$ and $R$, with some positive edge-density between $L$ and $R$, the lemma will allow us to find a small subset $K \subseteq R$ such that $N(K) \cap L$ is large and for $A' = N(K) \cap A$ and $L' = N(K) \cap L$, almost all $\Delta$-subsets of $L' \cup A'$ have a large common neighborhood in $R$.
Moreover, to have control of the size of $A'$, we will guarantee that $K \notin {\cal{F}}$, for some not too $k$-dense set ${\cal{F}}$, which in our case will simply be the set of $S \subseteq R$ such that $N(S) \cap A$ is small.
\begin{lemma}\label{lemma:dependent_random_choice}
    Let $G$ be a graph and let $A, L, R \subseteq V(G)$.
    Moreover, let $k, s, \Delta \in \mathbb{N}$ and $0 \leq d \leq 1$ be such that $ \frac{\sum_{v \in L} d(v, R)}{|L|\cdot|R|} \geq d$.
    Let $\mathcal{F} \subseteq 2^R$ be an up-closed family with $k$-density at most $\delta$ for some $\delta < d^k/2$.
    Then, there exists $K \subseteq R$ of size $|K| \leq k$ such that
    \begin{enumerate}
        \item $K \notin \cal{F}$,
        \item $|N(K) \cap L| \geq \frac{d^k - \delta}{2}|L|$, \emph{and}
        \item $(N(K) \cap (L \cup A), \emptyset)$ is a $(R, \Delta, s, \frac{4}{d^k - \delta} \binom{|L \cup A|}{\Delta} (\frac{s}{|R|})^k)$-community.
    \end{enumerate}
\end{lemma}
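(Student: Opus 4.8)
The plan is to run a (slightly modified) dependent random choice argument. Let $x_1,\dots,x_k$ be chosen independently and uniformly at random from $R$, and set $K=\{x_1,\dots,x_k\}\subseteq R$, a set of size at most $k$. I will keep track of three quantities of this random $K$ simultaneously: the indicator $\mathbf 1[K\in\mathcal F]$; the size $|N(K)\cap L|$; and the number $Y$ of \emph{bad} $\Delta$-sets, meaning sets $S\in\binom{N(K)\cap(L\cup A)}{\Delta}$ with $|N(S)\cap R|\le s$. Note that these bad sets are exactly the ones witnessing a failure of the community condition for $(N(K)\cap(L\cup A),\emptyset)$, so controlling $Y$ will give the third conclusion.

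First I would estimate each quantity in expectation. For $\mathbf 1[K\in\mathcal F]$: conditioning on $|K|=i$ makes $K$ a uniformly random $i$-subset of $R$, and since $\mathcal F$ is up-closed with $k$-density at most $\delta$, \cref{observation:up_closed_density} shows its $i$-density is also at most $\delta$, so $\Pr[K\in\mathcal F\mid |K|=i]\le\delta$ for every $i$ and hence $\Pr[K\in\mathcal F]\le\delta$. For $|N(K)\cap L|$: a fixed $u\in L$ lies in $N(K)$ precisely when all $k$ samples land in $N(u)\cap R$, which has probability $(d(u,R)/|R|)^k$; summing over $u\in L$ and using convexity of $t\mapsto t^k$ together with $\sum_{u\in L}d(u,R)\ge d\,|L|\,|R|$ gives $\mathbb E[|N(K)\cap L|]\ge d^k|L|$. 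For $Y$: a fixed $S\in\binom{L\cup A}{\Delta}$ is contained in $N(K)$ iff all $k$ samples land in $N(S)\cap R$, so if $|N(S)\cap R|\le s$ this has probability at most $(s/|R|)^k$; summing over all such $S$ gives $\mathbb E[Y]\le\mu$, where $\mu:=\binom{|L\cup A|}{\Delta}(s/|R|)^k$.

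Next I would combine these by a deletion-type argument. We may assume $\mu>0$ (otherwise $Y\equiv 0$ and only the first two conclusions need arranging, which the same computation does). Put $\lambda:=\frac{(d^k-\delta)|L|}{2\mu}$ and consider the random variable $W:=|N(K)\cap L|\cdot\mathbf 1[K\notin\mathcal F]-\lambda Y$. Using $|N(K)\cap L|\le|L|$ and the three bounds above, $\mathbb E[W]\ge(d^k-\delta)|L|-\lambda\mu=\tfrac12(d^k-\delta)|L|>0$, where positivity uses $\delta<d^k/2$. Fix an outcome $K$ with $W\ge\tfrac12(d^k-\delta)|L|$. Then $\mathbf 1[K\notin\mathcal F]$ cannot vanish (else $W=-\lambda Y\le 0$), which gives the first conclusion; consequently $|N(K)\cap L|=W+\lambda Y\ge\tfrac12(d^k-\delta)|L|$, the second conclusion; and $\lambda Y=|N(K)\cap L|-W\le|N(K)\cap L|\le|L|$, so $Y\le\frac{2\mu}{d^k-\delta}\le\frac{4}{d^k-\delta}\binom{|L\cup A|}{\Delta}(s/|R|)^k$. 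Together with $|N(\emptyset)\cap R|=|R|\ge s+1$ and $|N(K)\cap(L\cup A)|\ge|N(K)\cap L|\ge\Delta$ (both of which hold in the relevant parameter range), this bound on $Y$ is precisely what is needed for $(N(K)\cap(L\cup A),\emptyset)$ to be an $(R,\Delta,s,\frac{4}{d^k-\delta}\binom{|L\cup A|}{\Delta}(s/|R|)^k)$-community, which is the third conclusion.

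The main obstacle is not any single expectation estimate — each is a short, routine computation — but rather getting all three conclusions to hold for one and the same $K$. The weighted objective $W$ is what couples ``$K\notin\mathcal F$'' with ``$|N(K)\cap L|$ large'', and the point of the particular choice of $\lambda$ is that the resulting slack in $\mathbb E[W]$ then forces the deletion bound on $Y$ automatically; checking that the constant works out is the crux. A secondary subtlety is that the density of $\mathcal F$ is measured on genuine $k$-subsets whereas we sample a size-$k$ multiset, and bridging this gap is exactly the role of \cref{observation:up_closed_density} via the conditioning-on-$|K|$ trick.
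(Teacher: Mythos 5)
Your proposal is correct and follows essentially the same route as the paper: sample $k$ vertices of $R$ with repetition, bound $\Pr[K\in\mathcal F]$ via \cref{observation:up_closed_density}, use Jensen for $\mathbb E[|N(K)\cap L|]$ and the $(s/|R|)^k$ bound for the expected number of bad $\Delta$-sets, then capture all three properties at once by a weighted alteration. The only (cosmetic) difference is bookkeeping: the paper conditions on the event $K\notin\mathcal F$ and takes a zero-mean linear combination of the conditional expectations, whereas you multiply by the indicator $\mathbf 1[K\notin\mathcal F]$ and run an unconditional deletion argument with the weight $\lambda$; both yield the stated constants (yours in fact gives $2/(d^k-\delta)$ in place of $4/(d^k-\delta)$).
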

\begin{proof}
    Pick a set $K$ of $k$ vertices from $R$ uniformly at random with repetitions and let $M = N(K) \cap L= \{ u \in L : K \subseteq N(u) \cap R \}$.
    Let $X = |M|$ and let $E$ be the event that $K \notin \cal{F}$.
    Note that by \cref{observation:up_closed_density} we get that $\Pr[E] \geq 1 - \delta$.
    By Jensen's inequality we get
    \[
        \mathbb{E}[X] = \sum_{v \in L} \left(\frac{d(v,R)}{|R|}\right)^k \geq |L|d^k
    \]
    and since $X \leq |L|$ and $\mathbb{E}[X | E] \geq \mathbb{E}[X] - \Pr[\bar{E}] \mathbb{E}[X | \bar{E}]$ we get that
    \[
        \mathbb{E}[X | E] \geq |L|(d^k - \delta).
    \]
    Let now $Y$ be the random variable counting the number of subsets $S \subseteq N(K) \cap (L \cup A)$ of size $|S| = \Delta$ with at most $s$ common neighbors in $R$.
    For a given such $S \subseteq L \cup A$, the probability that it is a subset of $N(K)$ is at most $(\frac{|N(S) \cap R|}{|R|})^k \leq (\frac{s}{|R|})^k$ and therefore
    \[
        \mathbb{E}[Y] \leq \binom{|L \cup A|}{\Delta} \left( \frac{s}{|R|}\right)^k.
    \]
    Moreover, since $Y \geq 0$, we get that $\mathbb{E}[Y | E] \leq (1-\delta)^{-1}\mathbb{E}[Y] \leq 2 \mathbb{E}[Y]$.

    Let us suppose for a moment that $\mathbb{E}[Y | E] > 0$.
    By the linearity of expectation we know that
    \[
    \mathbb{E}\left[X - \frac{\mathbb{E}[X | E]}{2\mathbb{E}[Y | E]} Y - \frac{\mathbb{E}[X | E]}{2} ~\middle|~ E\right] = 0.
    \]
    Therefore, there exists a choice of $K \notin {\cal{F}}$ with the corresponding $M$ for which

    \[ X - \frac{\mathbb{E}[X | E]}{2\mathbb{E}[Y | E]} Y - \frac{\mathbb{E}[X | E]}{2} \ge 0. \]
    Fix such a choice.
    Then,
    \[
        |M| = X \geq \frac{\mathbb{E}[X | E]}{2} \geq |L| \frac{d^k - \delta}{2},
    \]
    and since $|X| \leq |L|$, it also holds that
    \[
        Y \leq \frac{2X}{\mathbb{E}[X | E]} \mathbb{E}[Y | E] \leq \frac{4}{d^k - \delta} \binom{|L \cup A|}{\Delta} \left(\frac{s}{|R|}\right)^k.
    \]
    %\YW{I think the final expression should have $4$ rather than $2$. If I'm right, this also needs to percolate to applications of this lemma} \PM{Agreed, changed it to a $4$}
    In particular, this choice of $K$ concludes the proof.

    It remains to consider the case when $\mathbb{E}[Y | E] = 0$.
    In this case, since $Y$ is always non-negative, it must take the value $0$ with probability $1$ conditioned on $E$.
    Thus, if we select any $K \notin \cal{F}$ for which the corresponding $X$ is at least $\mathbb{E}[X | E]$, we can again conclude the proof.
\end{proof}

We are now ready to state and prove \cref{lemma:backward_step}, which will allow us to find one layer of our structure at a time.
More specifically, having found $B_i$ in the current step, as well as an interval $I_{i-1}$ of the median order with many edges going from $I_{i-1}$ to $B_i$, \cref{lemma:backward_step} will allow us to find $B_{i, 0} \subseteq B_{i, 1} \subseteq \dots \subseteq B_{i, w} \subseteq B_i$ and $B_{i-1} \subseteq I_{i-1}$ such that for each $j \in [w]$ almost all $\Delta$-subsets of $B_{i-1} \cup B_{i, j-1}$ have many common out-neighbors in $B_{i, j}$.
\begin{lemma}\label{lemma:backward_step}
    Let $\Delta, w, s \in \mathbb{N}$ and let $T$ be a tournament.
    Let $I, B \subseteq V(T)$ be disjoint such that $d(I, B) \geq 1/3$ and $|B| \geq 3 \cdot 3^{6\Delta w}$.
    Then there exist $A \subseteq I$ and $B_0 \subseteq B_1 \subseteq \dots \subseteq B_w \subseteq B$ such that
    \begin{enumerate}
        \item $|A| = 3^{-10\Delta w} |I|$,
        \item for $j \in [w]$ we have that $3^{-3\Delta w} |B| \geq |B_j| \geq 3^{-6\Delta w}|B|$, \emph{and}
        \item for each $j \in [w]$ we have that $(A \cup B_{j-1}, \emptyset)$ is a $(B_j, \Delta, s, (3^{8\Delta w} \frac{s}{|B|})^{2\Delta} \binom{|I \cup B_{j-1}|}{\Delta})$-out-community.
    \end{enumerate}
\end{lemma}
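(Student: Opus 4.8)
\emph{Plan and Step~1.} I will construct the nested chain $B_w\supseteq B_{w-1}\supseteq\dots\supseteq B_0$ from the top down, and the set $A$ only at the very end, using $w+1$ applications of the dependent random choice argument of \cref{lemma:dependent_random_choice}, in its directed form (obtained from the same proof by replacing each neighbourhood with an in- or out-neighbourhood, which is the form compatible with in- and out-communities). Fix an integer $k=\Theta(\Delta)$ and set $s'\coloneqq 3^{-10\Delta w}|I|-1$; we may assume $s<3^{-8\Delta w}|B|$, as otherwise the parameter in requirement~(3) exceeds $\binom{|A\cup B_{j-1}|}{\Delta}$ and that requirement is vacuous. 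Now apply the in-community form of \cref{lemma:dependent_random_choice} with $L=B$, $R=I$, empty auxiliary set and empty forbidden family; since $d(I,B)\ge 1/3$ the relevant density is at least $1/3$, so we may take $d=1/3$, with set-size parameter $k_1=\Theta(\Delta w)$, threshold $s'$, and community-size parameter $wk$. This yields $B_w\coloneqq N^+(K)\cap B$ for the chosen $k_1$-set $K\subseteq I$, with $|B_w|\ge\tfrac12 3^{-k_1}|B|$, such that $(B_w,\emptyset)$ is an $(I,wk,s',e_0)$-in-community for a suitable $e_0$; after discarding vertices (which keeps the in-community property by \cref{observation:community_subset}) we arrange $3^{-6\Delta w}|B|\le |B_w|\le 3^{-3\Delta w}|B|$. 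This $B_w$ is the reservoir from which $A$ will be extracted, and the choice $s'\approx 3^{-10\Delta w}|I|$ is exactly what keeps $e_0<\binom{|B_w|}{wk}$.

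\emph{Step~2: peeling one layer at a time.} For $j=w,w-1,\dots,1$ in turn: assume $B_j$ is built and $K_w,\dots,K_{j+1}$ chosen, and set $C_j\coloneqq K_{j+1}\cup\dots\cup K_w$, which is disjoint from $B_j$ since no vertex lies in its own common in-neighbourhood. Maintain the invariant that $(B_w,C_j)$ is an $(I,jk,s',e_j)$-in-community for some $e_j$ (base case $j=w$, $C_w=\emptyset$: Step~1). Applying \cref{observation:community_extension_density} to this invariant with $\Delta_1=k$, $\Delta_2=(j-1)k$, $m=|B_j|$, and the split chosen so that the resulting density bound equals $\delta_1\coloneqq\tfrac14 3^{-k}$, we obtain that the family
$$\mathcal{F}_j\coloneqq\Bigl\{\,S\subseteq B_j:\ (B_w,\,C_j\cup S)\text{ is not an }(I,(j-1)k,s',e_{j-1})\text{-in-community}\,\Bigr\}$$
is upward closed with $k$-density at most $\tfrac14 3^{-k}<\tfrac12 3^{-k}$. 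Since the edge density of $B_j$ to itself is $\tfrac{|B_j|-1}{2|B_j|}\ge 1/3$ (as $|B_j|\ge 3$), apply the out-community form of \cref{lemma:dependent_random_choice} with $L=R=B_j$, auxiliary set $I$, set size $k$, threshold $s$, community size $\Delta$, $d=1/3$, and forbidden family $\mathcal{F}_j$. This gives $K_j\subseteq B_j$ with $|K_j|=k$, $K_j\notin\mathcal{F}_j$, and $|N^-(K_j)\cap B_j|\ge\tfrac14 3^{-k}|B_j|$; set $B_{j-1}\coloneqq N^-(K_j)\cap B_j$. Since $K_j\notin\mathcal{F}_j$, the invariant holds at $j-1$ with $C_{j-1}=C_j\cup K_j$; and conclusion~(3) of \cref{lemma:dependent_random_choice} says exactly that $\bigl(N^-(K_j)\cap(B_j\cup I),\,\emptyset\bigr)$ is a $\bigl(B_j,\Delta,s,\tfrac{4}{(1/3)^k-\delta_1}\binom{|B_j\cup I|}{\Delta}(s/|B_j|)^k\bigr)$-out-community.

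\emph{Conclusion and verification.} When the loop ends, the invariant at $j=0$ gives $|N^-(C_0)\cap I|\ge s'+1=3^{-10\Delta w}|I|$ for $C_0=K_1\cup\dots\cup K_w$; let $A$ be any subset of $N^-(C_0)\cap I$ of size $3^{-10\Delta w}|I|$, giving requirement~(1). Requirement~(2) holds since $|B_w|$ was placed in the right range and each of the at most $w$ applications of \cref{lemma:dependent_random_choice} shrinks the current set by a factor at most $\tfrac14 3^{-k}$, so $k=\Theta(\Delta)$ can be chosen with $|B_1|\ge 3^{-6\Delta w}|B|$. For requirement~(3), fix $j\in[w]$: from $A\subseteq N^-(C_0)\cap I\subseteq N^-(K_j)\cap I$ and $B_{j-1}=N^-(K_j)\cap B_j$ we get $A\cup B_{j-1}\subseteq N^-(K_j)\cap(B_j\cup I)$, so by \cref{observation:community_subset} the pair $(A\cup B_{j-1},\emptyset)$ is an out-community in $B_j$ with the same parameters as above; a routine estimate --- using $|B_j|\ge 3^{-6\Delta w}|B|$, $|B_j|\le 4\cdot 3^{k}|B_{j-1}|$, $k\ge 2\Delta$ and $s<3^{-8\Delta w}|B|$ --- bounds that parameter by $\bigl(3^{8\Delta w}\tfrac{s}{|B|}\bigr)^{2\Delta}\binom{|I\cup B_{j-1}|}{\Delta}$. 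Degenerate regimes (e.g.\ when some $B_j$ or $N^-(K_j)\cap B_j$ is tiny) fall under the vacuous case $s\ge 3^{-8\Delta w}|B|$ or are treated separately.

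\emph{Main obstacle.} The crux is that three things must hold simultaneously: (i) the running union $C_j$ keeps a common in-neighbourhood of size at least $3^{-10\Delta w}|I|$ in $I$, so that $A$ is large enough; (ii) the out-community delivered by \cref{lemma:dependent_random_choice} is strong enough for requirement~(3) with the stated parameter; and (iii) every forbidden family $\mathcal{F}_j$ has $k$-density below $\tfrac12 3^{-k}$, the hypothesis needed to invoke \cref{lemma:dependent_random_choice}. Reconciling (i) and (iii) is done by chaining \cref{observation:community_extension_density} off the single strong in-community built in Step~1; arranging that this, the size control in (ii), and the reservoir bound all survive all $w$ peelings --- while only $|B|\ge 3\cdot 3^{6\Delta w}$ is available --- is the bulk of the technical bookkeeping, and is the source of the numerology behind the exponents $3\Delta w$, $6\Delta w$, $8\Delta w$, $10\Delta w$.
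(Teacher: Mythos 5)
Your proposal is correct and follows essentially the same route as the paper's proof: a first application of \cref{lemma:dependent_random_choice} toward $I$ builds the reservoir $B_w$ carrying an in-community invariant with threshold about $3^{-10\Delta w}|I|$, then $w$ peelings each combine \cref{observation:community_extension_density} with the forbidden-family version of \cref{lemma:dependent_random_choice} (whose conclusion~3 supplies requirement~(3) via \cref{observation:community_subset}), and finally $A$ is extracted from $N^-(K_1\cup\dots\cup K_w)\cap I$. The differences are cosmetic --- you keep the invariant grounded in $B_w$ with a growing fixed part $C_j$ where the paper re-grounds it in $B_j$, use auxiliary set $I$ rather than $N^-(K_j)\cap I$, and let $m=|B_j|$ vary where the paper fixes $m=3^{-6\Delta w}|B|$ to streamline the chaining --- and the parameter bookkeeping you defer (including keeping $|B_w|$ near the top of its allowed range so that $w$ shrinkages by $\approx 3^{-k}$ still leave $|B_1|\geq 3^{-6\Delta w}|B|$) is exactly what the paper carries out explicitly.
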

\begin{proof}
    Let $b = 3^{-10 \Delta w} |I|$, $\delta = 3^{-3\Delta}$ and $m = 3^{-6\Delta w}|B|$.
    We first apply \cref{lemma:dependent_random_choice} with $L_{\ref{lemma:dependent_random_choice}} = B$, $R_{\ref{lemma:dependent_random_choice}} = I$, $A_{\ref{lemma:dependent_random_choice}} = \emptyset$, $s_{\ref{lemma:dependent_random_choice}} = b$ and $k_{\ref{lemma:dependent_random_choice}} = \Delta_{\ref{lemma:dependent_random_choice}} = 2\Delta w$ 
    %\YW{should also define $A_{4.2}$ and $s_{4.2}$}\PM{Added this}
    to find a $K \subseteq I$ such that for $B_w = N^+(K) \cap B$ we have that $|B_w| \geq \frac{3^{-2\Delta w}}{2} |B| \geq 3^{-3\Delta w} |B|$ and that $(B_w, \emptyset)$ is an $(I, 2\Delta w, b, \delta^w \binom{m}{2\Delta w})$-in-community.
    For the last parameter, the bound on the number of bad $(2 \Delta w)$-subsets of $B_w$, we used that
    \begin{align*}
        4 \cdot 3^{2\Delta w} \binom{|B|}{2 \Delta w} \left(\frac{b}{|I|}\right)^{2 \Delta w} &\leq 4 \cdot 3^{2\Delta w} |B|^{2 \Delta w} (3^{-10\Delta w})^{2 \Delta w} \\
        & \leq 3^{-3\Delta w} \left(\frac{3^{-6\Delta w} |B|}{2\Delta w}\right)^{2\Delta w} \\
        & \leq \delta^w \binom{m}{2 \Delta w}.
    \end{align*}
    %\YW{This computation is not correct, it's missing the $-\delta$ term in what should be $\frac{2}{d^k-\delta}$}
    %\PM{It was confusingly correct because $\delta = 0$ in this case :P}
    Moreover, if $|B_w| > 3^{-3\Delta w}|B|$, we can simply restrict it to some arbitrary subset of size exactly $3^{-3\Delta w}|B|$, and maintain the community structure.

    We will now find sets $\emptyset = K_w \subseteq K_{w-1} \subseteq \dots \subseteq K_0 \subseteq B_w$ such that for $B_i = N^-(K_i) \cap B_w$,
    %\YW{I think $B_0$ should be $B_w$ here} 
    and $A_i = N^-(K_i) \cap I$
    the following conditions hold.
    For each $i = 0, \dots, w$ and $j = 1, \dots, w$ we want that
    \begin{enumerate}
        \item $|K_i| \leq 2\Delta(w-i)$,
        \item $|B_{j-1}| \geq 3^{-3\Delta}|B_{j}| \geq m$,
        \item\label{condition:continuation_possible} $(B_j, K_j)$ is an $(I, 2\Delta j, b, \delta^j \binom{m}{2 \Delta j})$-in-community, \emph{and}
        \item $(A_{j-1} \cup B_{j-1}, \emptyset)$ is a $(B_j, \Delta, s, (3^{8 \Delta w} \frac{s}{|B|})^{2 \Delta} \binom{|I \cup B_{j-1}|}{\Delta})$-out-community.
    \end{enumerate}

    Clearly, our initial choice of $B_w$ satisfies the above conditions upon setting $K_w=\emptyset$.
    Let us now suppose that for some $i \in [w]$ we have found appropriate $K_{i'}$ for all $i' \geq i$.
    We want to show how to find a suitable $K_{i-1}$.

    To that end, let $\mathcal{F} \subseteq 2^{B_i}$ be the collection of all subsets $S \subseteq B_i$ such that $(B_i, K_i \cup S)$ is not an $(I, 2\Delta (i-1), b, \delta^{i-1} \binom{m}{2\Delta (i-1)})$-in-community.
    By Condition \ref{condition:continuation_possible} and \cref{observation:community_extension_density} we get that $\cal{F}$ is upward closed with $2\Delta$-density at most $\delta = 3^{-3\Delta}$.

    Now, since $|B_i| \geq m \geq 3$, we get that $\frac{\sum_{v \in B_i} d(v, B_i)}{|B_i|^2} \geq 1/3$.
    Therefore, by \cref{lemma:dependent_random_choice} with $L_{\ref{lemma:dependent_random_choice}} = R_{\ref{lemma:dependent_random_choice}} = B_i$, $A = A_i$ and $k_{\ref{lemma:dependent_random_choice}} = 2 \Delta$ we can find a set $K' \subseteq B_i, K' \not\in \mathcal{F}$ such that for $K_{i-1} = K_i \cup K'$, and $A_{i-1}, B_{i-1}$ defined as above, we get
    \begin{itemize}
        \item $|K_{i-1}| \leq |K_i| + 2\Delta \leq 2\Delta (w - i +1)$,
        \item $|B_{i-1}| \geq \frac{3^{-2\Delta} - 3^{-3\Delta}}{2} |B_i| \geq 3^{-3\Delta}|B_i|$, \emph{and}
        \item $(A_{i-1} \cup B_{i-1}, \emptyset)$ is a $(B_i, \Delta, s, \frac{4}{3^{-2\Delta} - \delta} \binom{|B_i \cup A_i|}{\Delta} (\frac{s}{|B_i|})^k)$-out-community.
    \end{itemize}
    Since $K' \notin \cal{F}$, condition $\ref{condition:continuation_possible}$ is satisfied.
    Moreover, since $|B_i| \geq m$ and $A_i \subseteq I$, we get that
    \begin{align*}
        \frac{4}{3^{-2\Delta} - \delta} \binom{|B_i \cup A_i|}{\Delta} \left(\frac{s}{|B_i|}\right)^k &\leq 4 \cdot 3^{3\Delta} \left(3^{6\Delta w} \frac{s}{|B|}\right)^{2\Delta} \binom{|I \cup B_i|}{\Delta} \\
        & \leq \left(3^{8\Delta w} \frac{s}{|B|}\right)^{2\Delta} \binom{|I \cup B_i|}{\Delta}.
    \end{align*}
    Thus, all the remaining conditions are satisfied as well.

    We can therefore continue this argument all the way up through $i = 0$.
    At that point, we notice that by condition \ref{condition:continuation_possible} we have $|A_0| \geq b$.
    Since $A_0 \subseteq A_i$ for each $i = 0, \dots, w$, by taking an arbitrary $A \subseteq A_0$ of size $b$ we are able to find the subsets $A \subseteq I$ and $B_0 \subseteq \dots \subseteq B_w \subseteq B$ satisfying the conditions of the lemma.
\end{proof}

\subsubsection{Proof of Theorem \ref{theorem:embedding_structure}}\label{subsection:proof_of_structure_theorem}

Now, proving \cref{theorem:embedding_structure} is simply an inductive application of \cref{lemma:backward_step}, while being careful about the parameters.
At each step, we find the interval $I$ for \cref{lemma:backward_step} using the median order of $T$.
\begin{proof}[Proof of \cref{theorem:embedding_structure}]
    Fix $\Delta, w, h, s_1, \dots, s_h \in \mathbb{N}$ and $0<\delta <1$ and let $m_i, b_i$ and $a_i$ be defined as in the statement of the theorem.
    It suffices to show that the statement holds for any tournament of size $N = \sum_{i=1}^h 6a_i$.
    Therefore, let $T$ be such tournament and let $v_1, \dots, v_N$ be its median order.

    We will find the $B_i$'s together with the corresponding $B_{i, j}$'s iteratively using the median order of $T$.
    We start by placing $B_w$ at the end of the median order.
    At each step, we will make sure that each $B_i$ is contained in the interval of the median order of size $a_i$ and will search for $B_{i-1}$ in an interval immediately preceding it.
    Throughout, we will write $o_i$ for the starting index of the interval of the median order of $T$ containing $B_i$, i.e., we will have $B_i \subseteq [o_i, o_i + a_i)$.
    At each step we will guarantee that $o_i - 6a_{i-1} \leq o_{i-1} \leq o_i - a_{i-1}$, and thus we will have that the $B_i$'s are disjoint and $o_1 \geq 1$.

    We start by setting $o_h = N - a_h + 1$ and $B_h = [o_h, N+1)$.
    Suppose now that for some $i=2, \dots, h$ we have defined the $o_{i'}$'s and the $B_{i'}$'s for all $i \leq i' \leq h$ and the $B_{i'', j}$'s for all $i < i'' \leq h$ and $0 \leq j \leq w$.
    Before finding $B_{i, 0} \subseteq \dots \subseteq B_{i, w}$ and $B_{i-1}$, we first want to find a suitable $o_{i-1}$.
    We let $I' = [o_i - 6a_{i-1}, o_i)$ and notice that by \cref{observation:median_order}, and using that $a_{i-1} \geq \frac{a_i}{2}$ and $B_i \subseteq [o_i, o_i + a_i)$, we get that $d(I', B_i) \geq 1/3$.
    Now, by averaging, for at least one $\ell \in [6]$ we have $d([o_i - \ell a_{i-1}, o_i - (\ell-1)a_{i-1})) \geq 1/3$.
    For such a choice of $\ell$, we set $o_{i-1} = o_i - \ell a_{i-1}$ and write $I_{i-1} = [o_{i-1}, o_{i-1} + a_{i-1})$.

    We now apply \cref{lemma:backward_step} with $I_{\ref{lemma:backward_step}} = I_{i-1}$, $B_{\ref{lemma:backward_step}} = B_{i}$ and $s_{\ref{lemma:backward_step}} = s_{i}$ which gives us $B_{i-1} \subseteq I_{i-1}$ and $B_{i, 0} \subseteq \dots \subseteq B_{i, w} \subseteq B_i$ such that
    \begin{enumerate}
        \item $|B_{i-1}| = 3^{-10\Delta w}|I_{i-1}| = b_{i-1}$,
        \item $b_i = |B_i| \geq |B_{i, j}| \geq 3^{-6\Delta w} |B_i| = m_i$, 
        \item $(B_{i-1} \cup B_{i, j-1}, \emptyset)$ is a $(B_{i, j}, \Delta, s_i, (3^{8\Delta w} \frac{s_i}{b_i})^{2\Delta} \binom{|I_{i-1} \cup B_{i, j-1}|}{\Delta})$-out-community, for all $j \in [w]$.
    \end{enumerate}
    Moreover, since $|I \cup B_{i, j-1}| \leq 3a_i$ 
    %\YW{isn't $\leq 3a_i$ also true?}\PM{It is :)]}
    , we get that
    \begin{align*}
        \binom{|I_{i-1} \cup B_{i, j-1}|}{\Delta} \leq (3a_i)^{\Delta} \le 3^{17\Delta^2 w} m_i^{\Delta}
    \end{align*}
    %\YW{The final equality above is false, the factor of $6^\Delta$ is gone}
    and consequently,
    \begin{align*}
        \left(3^{8\Delta w} \frac{s_i}{b_i}\right)^{2\Delta} \binom{|I_{i-1} \cup B_{i, j-1}|}{\Delta} &\leq 2^{\Delta} \left(\frac{s_i}{2b_i}\right)^\Delta \left(3^{16\Delta w} \frac{s_i}{\delta^{-\frac{1}{\Delta}}3^{36\Delta w} s_i}\right)^{\Delta} 3^{17\Delta^2 w}m_i^{\Delta}\\
        &\leq 3^{2\Delta w}\left(\frac{s_i}{2b_i}\right)^{\Delta} 3^{-3\Delta^2 w} \delta  (3\Delta)^{\Delta} \left(\frac{m_i}{3\Delta}\right)^{\Delta}\\
        &\leq \delta \left(\frac{s_i}{2b_i}\right)^{\Delta} \binom{m_i/3}{\Delta},
    \end{align*}
    %\YW{In the middle line above, why is it $3^{-4\Delta^2 w}$? can't the constant by $20$?}
    where we plugged in our definitions of $b_i$ and $m_i$.
    In particular, for each $j \in [w]$, $(B_{i-1} \cup B_{i, j-1}, \emptyset)$ is a $(B_{i,j}, \Delta, s_i, \delta (\frac{s_i}{2b_i}) ^{\Delta} \binom{m_i/3}{\Delta})$-out-community.
    %\YW{The theorem statement has $\binom{m_i/3}{\Delta}$, but looks like we actually prove it with $m_i/2$?}

    By repeatedly applying this argument we are thus able to find all $B_i$'s for $i = 1, \dots, h$ and all $B_{i', j}$'s for $i' = 2, \dots, h$ and $j = 0, \dots, w$.
    Now, to find $B_{1, 0} \subseteq \dots \subseteq B_{1, w} \subseteq B_1$ we can once again apply \cref{lemma:forward_step}, this time with $I_0 = \emptyset$.
    By the same computations as above we get that $(B_{1, j-1}, \emptyset)$ is a $(B_{1, j}, \Delta, s_1, \delta(\frac{s_1}{2b_1})^{\Delta} \binom{m_1/3}{\Delta})$-out-community for each $j = 1, \dots, w$.
\end{proof}

\subsection{Embedding into the structure}\label{subsection:embedding_into_structure}
In this section, we prove \cref{theorem:main_theorem} by showing that we can embed the given digraph $G$ with graded bandwidth $w$ and maximum degree $\Delta$ into the structure given by \cref{theorem:embedding_structure}.
We find this embedding of $G$ layer-by-layer. As such, before giving the proof of \cref{theorem:main_theorem} in \cref{subsection:proof_main_theorem}, we first state and prove \cref{lemma:forward_step}, which allows us to embed one layer of $G$ at a time.

\subsubsection{A single embedding step}\label{subsection:forward_step}
To find an embedding of the next layer of $G$ into our structure, we will use the Lov\'{a}sz local lemma \cite{lovasz_lemma}, whose statement we now recall.
\begin{lemma}\label{lemma:local_lemma}
    Let $A_1, \dots, A_n$ be events in an arbitrary probability space and let $H = ([n], E)$ be a graph such that for each $i \in [n]$ the event $A_i$ is mutually independent of the events $\{ A_j : (i,j) \notin E \}$.
    Suppose moreover that $0 \leq x_1, \dots, x_n < 1$ are real numbers such that for all $i \in [n]$ we have $\Pr[A_i] \leq x_i \prod_{(i,j) \in E} (1- x_j)$.
    Then
    \[
        \Pr[\bigwedge_{i=1}^{n} \Bar{A_i} ] \geq \prod_{i=1}^n (1 - x_i) > 0.
    \]
\end{lemma}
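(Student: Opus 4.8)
Since Lemma~\ref{lemma:local_lemma} is the classical (asymmetric) Lovász Local Lemma, the plan is to reproduce the standard inductive argument; no ingredients beyond elementary conditional-probability manipulations are needed. The crux is the following claim: \emph{for every $i\in[n]$ and every $S\subseteq[n]\setminus\{i\}$, we have $\Pr[A_i\mid\bigwedge_{j\in S}\overline{A_j}]\le x_i$} (with the conditioning interpreted as vacuous when the conditioned event has probability zero; see the last paragraph). Granting the claim, the lemma follows by the chain rule:
\[
\Pr\Big[\bigwedge_{i=1}^n\overline{A_i}\Big]=\prod_{i=1}^n\Pr\big[\overline{A_i}\mid\overline{A_1}\wedge\dots\wedge\overline{A_{i-1}}\big]=\prod_{i=1}^n\big(1-\Pr[A_i\mid\overline{A_1}\wedge\dots\wedge\overline{A_{i-1}}]\big)\ge\prod_{i=1}^n(1-x_i)>0.
\]

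To prove the claim I would induct on $|S|$. For $S=\emptyset$, the hypothesis gives $\Pr[A_i]\le x_i\prod_{(i,j)\in E}(1-x_j)\le x_i$ since every factor lies in $[0,1]$. For the inductive step, split $S=S_1\cup S_2$ with $S_1=\{j\in S:(i,j)\in E\}$ the neighbours of $i$ and $S_2=S\setminus S_1$ the non-neighbours. If $S_1=\emptyset$, then mutual independence of $A_i$ from $\{A_j:(i,j)\notin E\}\supseteq\{A_j:j\in S_2\}$ yields $\Pr[A_i\mid\bigwedge_{j\in S}\overline{A_j}]=\Pr[A_i]\le x_i$ at once. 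Otherwise, enumerate $S_1=\{j_1,\dots,j_d\}$ and write
\[
\Pr\Big[A_i\,\Big|\,\bigwedge_{j\in S}\overline{A_j}\Big]=\frac{\Pr\big[A_i\wedge\bigwedge_{j\in S_1}\overline{A_j}\,\big|\,\bigwedge_{j\in S_2}\overline{A_j}\big]}{\Pr\big[\bigwedge_{j\in S_1}\overline{A_j}\,\big|\,\bigwedge_{j\in S_2}\overline{A_j}\big]}.
\]
The numerator is at most $\Pr[A_i\mid\bigwedge_{j\in S_2}\overline{A_j}]=\Pr[A_i]\le x_i\prod_{(i,k)\in E}(1-x_k)$, again by mutual independence. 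For the denominator, the chain rule expands it as $\prod_{\ell=1}^d\Pr[\overline{A_{j_\ell}}\mid\overline{A_{j_1}}\wedge\dots\wedge\overline{A_{j_{\ell-1}}}\wedge\bigwedge_{j\in S_2}\overline{A_j}]$; each conditioning set here is a subset of $S\setminus\{j_\ell\}$, hence of size $<|S|$, so the induction hypothesis bounds the corresponding factor below by $1-x_{j_\ell}$, giving denominator at least $\prod_{j\in S_1}(1-x_j)$. Cancelling the factors $(1-x_j)$ for $j\in S_1$ (a subset of the neighbourhood of $i$) against the numerator and discarding the remaining factors $(1-x_k)\le 1$ leaves exactly $x_i$, which proves the claim.

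The only point needing care is that all these conditional probabilities are meaningful, i.e., $\Pr[\bigwedge_{j\in S}\overline{A_j}]>0$ for every conditioning set $S$ that appears. I would fold this into the same induction: if positivity holds for every proper subset of $S$, then expanding $\Pr[\bigwedge_{j\in S}\overline{A_j}]$ by the chain rule and using the claim's bound $\Pr[A_j\mid\cdots]\le x_j<1$ for each factor shows $\Pr[\bigwedge_{j\in S}\overline{A_j}]\ge\prod_{j\in S}(1-x_j)>0$. Thus positivity and the inequality $\Pr[A_i\mid\bigwedge_{j\in S}\overline{A_j}]\le x_i$ are established together by a single induction on $|S|$, and there is no genuine obstacle beyond this bookkeeping.
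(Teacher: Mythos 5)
Your argument is the classical inductive proof of the asymmetric Lov\'asz Local Lemma, and it is correct and complete: the induction on $|S|$ proving $\Pr[A_i\mid\bigwedge_{j\in S}\overline{A_j}]\le x_i$, the split into neighbours and non-neighbours with mutual independence applied to the non-neighbour part, and the simultaneous handling of the positivity of all conditioning events are exactly the standard steps. The paper does not prove this lemma at all --- it states it and cites Erd\H{o}s--Lov\'asz \cite{lovasz_lemma} --- so there is nothing to compare beyond noting that your write-up reproduces the standard proof from the literature faithfully.
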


We are now ready to state and prove our embedding lemma, which will allow us to embed one layer of the graded partition $V_1, \dots, V_H$ of $G$ at a time in the following manner.
At a given step we have already embedded $V_1$ into $A_1$, $V_2$ into $A_2$ and so forth, all the way up to embedding $V_{i-1}$ into $A_{i-1}$.
We would now like to embed $V_i$ into $A_i$ --- which will be the sets $W_1$ and $A$ in \cref{lemma:forward_step}, respectively, and we shall now denote them by the latter names.
Since for each $u \in W_1$ we have already embedded all the in-neighbors of $u$, we however cannot embed $u$ in just any vertex in $A$; let $f(u) \subseteq A$ be the set of vertices where we can embed $u$, i.e., the common out-neighborhood in $A$ of the vertices we have chosen as the embedding of the in-neighbors of $u$.
By carefully performing the embedding at the previous steps, we will be able to guarantee that for each $u \in W_1$ the set $f(u)$ will still be large, say $|f(u)| \geq b$ for some $b \in \mathbb{N}$.

Since where we embed some $u \in W_1$ affects the possible choices for the embedding of all the out-neighbors $v$ of $u$ in the future steps, we will also have to be careful in this step.
Specifically, we will let $W_2$ be the set of all vertices that have an in-neighbor in $V_i$ and let $D$ be a bipartite graph with parts $W_1$ and $W_2$ with edges between each $u \in W_1$ and all its out-neighbors $v \in W_2$ in $G$.
Then, for each $v \in W_2$, we will let ${\cal{F}}_v \subseteq 2^{A}$ be the set of all possible bad embeddings of the in-neighbors $u \in W_1$ of $v$ into $A$.
In our case, these will be all the embeddings that do not leave much space to embed $v$ later on.
Now, \cref{lemma:forward_step} simply says that as long as the sets ${\cal{F}}_v$ are not too dense, we can indeed embed $W_1$ into $A$ such that each $u \in W_1$ lands at some $x \in f(u)$ while at the same time avoiding all of these bad placements ${\cal{F}}_v$ of the in-neighbors of each $v \in W_2$.

\begin{lemma}\label{lemma:forward_step}
    Let $D$ be a bipartite graph with vertex classes $W_1$ and $W_2$ and let $\Delta^+, \Delta^- \ge 1$ be such that every vertex in $W_1$ has degree at most $\Delta^+$ and every vertex in $W_2$ has degree at most $\Delta^-$. Let $a, b \in \mathbb{N}$ be integers such that $a \geq b \geq 32|W_1|$ and let $A$ be a set of size $|A| = a$.
    For each $v \in W_2$, let ${\cal{F}}_v \subseteq 2^A$ be a collection of sets with $|N_D(v)|$-density at most $\frac{1}{4\Delta^+ \Delta^-} (2^{-1/2} \frac{b}{a})^{|N_D(v)|}$.
    Moreover, let $f: W_1 \to 2^A$ be a function such that for each $u \in W_1$ we have $|f(u)| \geq b$.
    Then, there exists an injective function $\phi: W_1 \to A$ such that for each $u \in W_1$ we have $\phi(u) \in f(u)$ and for each $v \in W_2$ we have $\phi(N_D(v)) \notin {\cal{F}}_v$.
\end{lemma}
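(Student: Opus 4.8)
The plan is to apply the Lovász Local Lemma (\cref{lemma:local_lemma}) directly. I would first pick, for each $u \in W_1$, a vertex $\phi(u) \in f(u)$ uniformly and independently at random. This random map need not be injective, so I would run the local lemma on a combined event space that handles both the injectivity requirement and the requirement that $\phi(N_D(v)) \notin \mathcal{F}_v$ for each $v \in W_2$. Concretely, for each pair $u, u' \in W_1$ with a common potential image I define a ``collision'' event $C_{u,u'}$ that $\phi(u) = \phi(u')$, and for each $v \in W_2$ I define a ``bad embedding'' event $B_v$ that $\phi(N_D(v)) \in \mathcal{F}_v$. Avoiding all these events simultaneously yields the desired injective $\phi$.

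Next I would bound the probabilities. For a collision event $C_{u,u'}$, since $|f(u)|, |f(u')| \geq b$ and the two images are independent uniform, $\Pr[C_{u,u'}] \leq 1/b$ (summing over the at most... well, $\Pr[\phi(u)=\phi(u')] = \sum_{x} \Pr[\phi(u)=x]\Pr[\phi(u')=x] \le \max_x \Pr[\phi(u')=x] \le 1/b$). For a bad event $B_v$: the images $\phi(u)$ for $u \in N_D(v)$ are independent, each supported on a set of size $\geq b$ inside $A$ with $|A| = a$; so the probability that a particular $k$-subset ($k = |N_D(v)|$) of $A$ is hit is at most $(1/b)^k \le (a/b)^k / a^k$... more carefully, the probability that $\phi(N_D(v))$ equals a given ordered tuple is at most $b^{-k}$, and using that $\mathcal{F}_v$ is upward closed with $k$-density at most $\delta_v := \frac{1}{4\Delta^+\Delta^-}(2^{-1/2} b/a)^k$, the number of $k$-sets in $\mathcal{F}_v$ is at most $\delta_v \binom{a}{k} \le \delta_v a^k$, giving $\Pr[B_v] \le \delta_v a^k \cdot k! \cdot b^{-k} \le \delta_v (a/b)^k \cdot k^k$. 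Hmm — I need to be careful here: $\phi(N_D(v))$ is a set, and whether it lands in $\mathcal{F}_v$. I would instead say: $\Pr[\phi(N_D(v)) \in \mathcal{F}_v] = \sum_{S \in \mathcal{F}_v \cap \binom{A}{k}} \Pr[\phi(N_D(v)) = S]$ (plus smaller terms if images collide, which only helps since $\mathcal{F}_v$ is upward closed — actually collisions mean $|\phi(N_D(v))| < k$, and an upward closed family could still contain such small sets; but by \cref{observation:up_closed_density} the $i$-density is also $\le \delta_v$, so these are controlled). Bounding $\Pr[\phi(N_D(v)) = S] \le k! / b^k$ and using $k! \le k^k$ and $k \le \Delta^-$, I get $\Pr[B_v] \le \delta_v \cdot (\Delta^-)^{\Delta^-}(a/b)^{\Delta^-}$. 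Let me just aim for $\Pr[B_v] \le \frac{1}{4\Delta^+\Delta^-} \cdot 2^{-k/2} \cdot (\text{small})$ so the assumed density is exactly calibrated — this is the routine calculation I'd grind through carefully in the real proof.

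Then I would set up the dependency graph and the weights $x_i$ for the local lemma. A collision event $C_{u,u'}$ depends only on events involving $u$ or $u'$: other collisions touching $u$ or $u'$ (at most $2|W_1|$ of them) and bad events $B_v$ with $v$ a neighbor of $u$ or $u'$ (at most $2\Delta^+$ of them). A bad event $B_v$ depends on events involving some $u \in N_D(v)$: collisions touching such $u$ (at most $\Delta^- \cdot |W_1|$) and other bad events $B_{v'}$ sharing a common in-neighbor $u$ (at most $\Delta^- \Delta^+$). I would assign all collision events the same weight $x_C$ and all bad events the weight $x_B$, chosen of order $x_C \approx 2/b$ and $x_B \approx 2 \cdot 2^{-k/2}/(\text{stuff})$ — actually a cleaner choice is $x_C = 4/b$ and, for $B_v$, $x_{B_v} = 2^{1-k/2}$ or similar, tuned so the LLL inequalities $\Pr[\cdot] \le x_i \prod (1-x_j)$ hold given $a \ge b \ge 32|W_1|$. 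The condition $b \ge 32|W_1|$ is what makes $\prod (1-x_C)^{2|W_1|} \approx (1-4/b)^{2|W_1|} \ge e^{-8|W_1|/b} \ge e^{-1/4}$ be bounded below by a constant, and similarly the factor $\frac{1}{4\Delta^+\Delta^-}$ in the density hypothesis absorbs the $\Delta^+\Delta^-$-type degree bounds in the dependency graph. The main obstacle — and the only real work — is checking that the two families of LLL inequalities are simultaneously satisfiable with a single consistent choice of weights; once the exponents and constants in the hypothesis are matched to the dependency degrees, the local lemma gives a positive probability that all events are avoided, and any outcome in that event provides the injective $\phi$ with $\phi(u) \in f(u)$ and $\phi(N_D(v)) \notin \mathcal{F}_v$, completing the proof.
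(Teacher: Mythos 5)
Your overall strategy coincides with the paper's: embed each $u\in W_1$ uniformly at random in $f(u)$, independently, and apply the Lov\'asz local lemma to collision events and bad-embedding events with exactly the dependency counts you describe. However, the two places you defer as ``routine calibration'' are precisely where the proof has essentially zero slack, and as written your bounds do not close. First, your estimate $\Pr[B_v]\le \delta_v a^{k}\,k!\,b^{-k}\le \delta_v (a/b)^k k^k$ (with $k=|N_D(v)|$) carries a spurious factor $k!$: the correct count is of \emph{ordered} tuples with distinct coordinates whose underlying set lies in $\mathcal{F}_v$, of which there are at most $\delta_v\binom{a}{k}k!\le \delta_v a^k$, each attained with probability at most $b^{-k}$; this yields $\Pr[B_v]\le \delta_v(a/b)^k=\frac{1}{4\Delta^+\Delta^-}2^{-k/2}$, and the local lemma inequality $\Pr[B_v]\le y(1-y)^{k\Delta^+}(1-x)^{k|W_1|}$ with $y=\frac{1}{2\Delta^+\Delta^-}$ is tight enough that no extra factor of order $k^k$ (or even $e^k$) can be absorbed.

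Second, your inclusive definition of $B_v$ (allowing $|\phi(N_D(v))|<k$) is not rescued by \cref{observation:up_closed_density} alone: for an $i$-set $S\in\mathcal{F}_v$ with $i<k$, the probability that $\phi(N_D(v))\subseteq S$ is only bounded by $(i/b)^k$, and summing over the at most $\delta_v\binom{a}{i}$ such sets leaves a surplus factor of order $i^k/a^{k-i}$, which need not be small under the sole hypothesis $a\ge b\ge 32|W_1|$ (e.g.\ $i=k-1$ and $a\approx 32k$). The clean fix, which is what the paper does, is to define $B_v$ as the event that $|\phi(N_D(v))|=|N_D(v)|$ \emph{and} $\phi(N_D(v))\in\mathcal{F}_v$; this loses nothing, since any outcome with a repeated image is already excluded by a collision event, so avoiding all events still produces a valid injective $\phi$. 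Finally, your tentative weight $x_{B_v}=2^{1-k/2}$ is inadmissible (it is $\ge 1$ for $k\le 2$) and would also destroy the inequality for the collision events; the choice that works is a uniform $y=\frac{1}{2\Delta^+\Delta^-}$ for all $B_v$ together with $x=8/b$ for collisions, for which $b\ge 32|W_1|$, $|N_D(v)|\le\Delta^-$, and the bound $1-z\ge 4^{-z}$ for $0\le z\le\frac12$ make both families of inequalities hold. With these corrections your argument becomes the paper's proof.
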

\begin{proof}
    Let $s = |W_1|$ and let $\phi: W_1 \to A$ be a random mapping such that for each $u \in W_1$ the value of $\phi(u)$ is picked uniformly at random from the set $f(u)$, with all these choices made independently.
    For distinct $u, w \in W_1$ let $A_{uw}$ be the event that $\phi(u) = \phi(v)$.
    Moreover, for each $v \in W_2$ let us write $N_v = N_D(v)$ and let $B_v$ be the event that $|\phi(N_v)| = |N_v|$ but $\phi(N_v) \in {\cal{F}}_v$.
    Clearly, if none of the bad events $A_{uw}$ and $B_v$ hold, then $\phi$ satisfies the requirements of the lemma.

    Let us now bound the probabilities for each of these events to hold.
    For $u, w \in W_1$, we have $\Pr[A_{uw}] \leq \frac{1}{b}$.
    To bound $\Pr[B_v]$, consider a tuple $(\phi(u))_{u \in N_v}$ such that $|\phi(N_v)| = |N_v|$ but $\phi(N_v) \in {\cal{F}}_v$.
    Since the $|N_v|$-density of ${\cal{F}}_v$ is at most $\delta_v = \frac{1}{4\Delta^+ \Delta^-} (2^{-1/2} \frac{b}{a})^{|N_v|}$, the number of such tuples is at most
    \[
        \delta_v \binom{a}{|N_v|} |N_v|! \leq \delta_v a^{|N_v|}.
    \]
    Since the tuple $(\phi(u))_{u \in N_v}$ is chosen uniformly at random from a subset of $A^{|N_v|}$ of size at least $b^{|N_v|}$, this implies that $\Pr[B_v] \leq \frac{\delta_v a^{|N_v|}}{b^{|N_v|}} \leq \frac{1}{4\Delta^+ \Delta^-} 2^{-|N_v| / 2}$.

    Let us now consider the dependencies between the bad events.
    Note that since the random variables $\{\phi(u) \}_{u \in W_1}$ are mutually independent, the event $A_{uw}$ is mutually independent from all $A_{u' w'}$ and $B_v$ such that $\{u, w\} \cap \{ u', w'\} = \emptyset$ and $\{ u, w\} \cap N_v = \emptyset$, respectively.
    Thus, $A_{uw}$ is dependent on at most $2(s-2) < 2s$ events $A_{u'w'}$ and at most $2\Delta^+$ events $B_v$, where we recall that $s=|W_1|$.
    Similarly, the event $B_v$ is mutually independent from all $A_{uw}$ and $B_{v'}$ such that $N_v \cap \{ u, w\} = \emptyset$ and $N_v \cap N_{v'} = \emptyset$, respectively, and thus it is dependent on at most $s|N_v|$ events $A_{uw}$ and at most $|N_v| \Delta^+$ events $B_{v'}$.

    We now want to apply the local lemma.
    For each $A_{uw}$ we let the corresponding $x_i$ be $x = \frac{8}{b}$ and for each $B_v$ we let the corresponding $x_i$ be $y = \frac{1}{2\Delta^+ \Delta^-}$.
    Since $\Delta^- \geq 1$ and $b \geq 32s$ we then get
    \[
        x(1-x)^{2s}(1-y)^{2\Delta^+} \geq \frac{8}{b} 4^{-16s / b} 4^{-1 / \Delta^-} \geq \frac{1}{b} \geq \Pr[A_{uw}],
    \]
    where we used the inequality $1-z \geq 4^{-z}$, valid for all $0 \leq z \leq \frac{1}{2}$.
    Similarly, for all $v \in V_2$ we have
    \[
    y(1-y)^{|N_v| \Delta^+} (1-x)^{s |N_v|} \geq \frac{1}{2\Delta^+ \Delta^-} 4^{-|N_v| / (2\Delta^-)} 4^{-8s|N_v| / b} \geq \frac{1}{4\Delta^+ \Delta^-} 4^{-|N_v|/4} \geq \Pr[B_v],
    \]
    where we used that $|N_v| \le \Delta^-$.
    Therefore, by Lemma \ref{lemma:local_lemma}, the probability that none of the bad events $A_{uw}$ and $B_v$ occur is positive and thus, there exists a choice of $\phi$ satisfying the desired properties.
\end{proof}
\subsubsection{Proof of Theorem \ref{theorem:main_theorem}}\label{subsection:proof_main_theorem}
With \cref{lemma:forward_step} in hand, we are finally ready to prove \cref{theorem:main_theorem}.
As described above, we start by renaming the structure given by \cref{theorem:embedding_structure} to get sets $A_1, \dots, A_H$ such that for each $i \in [H]$ almost all $\Delta$-tuples in $A_{i - w} \cup \dots \cup A_{i - 1}$ have a large common out-neighborhood in $A_i$.
Then, we embed the layers of the graded partition $V_1, \dots, V_H$ of $G$ into the respective $A_i$'s one by one using \cref{lemma:forward_step}.
In particular, at each step after having embedded $V_1$ up to $V_{i-1}$, we will want to embed $V_i$ into $A_i$, while making sure that we will be able to continue embedding $V_{i+1}$ etc. in the future.
In particular, for each $v \in V_j$ with $j > i$, if $u_1, \dots, u_{\Delta}$ are the in-neighbors of $v$ in $G$ such that for some $\ell_1 < \ell_2 \leq \Delta$ we have already embedded $u_1, \dots, u_{\ell_1}$ and $u_{\ell_1 + 1}, \dots, u_{\ell_2} \in V_i$, we want to pick the embedding of $u_{\ell_1 + 1}, \dots, u_{\ell_2}$ in a way such that still for almost all choices of $\phi(u_{\ell_2 + 1}), \dots, \phi(u_{\Delta})$, we have that $\phi(u_1), \dots, \phi(u_{\Delta})$ have a large common out-neighborhood in $A_j$, which will give us a lot of space to put $v$.
Therefore, in the notation of \cref{lemma:forward_step}, for each such $v$, we will let ${\cal{F}}_v$ be the set of all $\phi(u_{\ell_1 + 1}), \dots, \phi(u_{\ell_2})$ that fail to satisfy this property.

\begin{proof}[Proof of \cref{theorem:main_theorem}]
    Let $G$ be a digraph with maximum degree at most $\Delta$ and graded bandwidth $w$, and let $V_1, \dots, V_H$ be its graded partition.
    Throughout, we will write $V_i = \emptyset$ for $i \notin [H]$.
    We let $h$ be the smallest integer such that $hw \geq H$ and for each $i \in [h]$ define
    \[
        N_i = \max \{ |V_{iw - (w-1)}|, |V_{iw - (w-2)}|, \dots, |V_{iw}|\}.
    \]
    Moreover, we let
    \[
        n_i' = \sum_{j=1}^h \frac{N_j}{2^{|i -j|}}
    \]
    and note that $n_i' \geq \max \{ \frac{n'_{i-1}}{2}, \frac{n'_{i+1}}{2}, N_i \}$. We also observe for future reference that 
    \[
    \sum_{i=1}^h n_i' =\sum_{i=1}^h \sum_{j=1}^h \frac{N_j}{2^{|i-j|}} = \sum_{j=1}^h N_j \sum_{i=1}^h \frac{1}{2^{|i-j|}}
    \leq \sum_{j=1}^h 4N_j.
    \]
    We define $s_i' = 64wn'_i$, set $\delta = (\frac{1}{4 \Delta^2})^w 2^{-\Delta/2}$, and let $a_i'$, $b_i'$ and $m_i'$ 
    %\YW{should the last one be $m_i'$? we just defined $s_i'$}
    be defined as in \cref{theorem:embedding_structure}.
    Moreover, for $0 \leq k \leq w$ and $0 \leq d \leq \Delta$, we set $\delta_{k, d} = (\frac{1}{4\Delta^2})^k \cdot (2^{-1/2}\frac{s_1'}{2b_1'})^{d}$. 
    %\YW{should these be $s_1'$ and $b_1'$?} 
    Note that, in particular, $\delta_{w,\Delta} = \delta_{\ref{theorem:embedding_structure}} (\frac{s_1'}{2b_1'})^\Delta$.
    %\YW{This definition seems wrong in two ways. First, I think some of the $\Delta$ should be $d$. Second, I don't understand why we get an exponent of $k$ (see below)}

    We want to show that $\vv{r}(G) \leq 3^{57\Delta w}|V(G)|$.
    Therefore, let $T$ be an arbitrary tournament on at least
    \begin{align*}
        3^{57\Delta w}|V(G)| &\geq 3^{54\Delta w }w \cdot \sum_{i=1}^H 4 |V_i| \geq 3^{54\Delta w} \sum_{i=1}^h 4 N_i \\
        &\geq 3^{54\Delta w} \sum_{i=1}^h n_i' \geq 3^{52\Delta w} \sum_{i=1}^h \delta_{\ref{theorem:embedding_structure}}^{-1/\Delta}n_i'\\
        &\geq \sum_{i=1}^h 3^{48\Delta w} \cdot \delta_{\ref{theorem:embedding_structure}}^{-1/\Delta} \cdot 64w n_i' \geq \sum_{i=1}^h 6a_i'
    \end{align*}
    %\YW{The middle inequality in the second line is false if $\Delta=w=1$, so we need to edit it a bit}
    %\PM{I think it is fine, we get $4 \sqrt{2} \leq 9$ which is true? Am I missing something?}\YW{yeah, no idea what I was thinking}
    vertices.
    By \cref{theorem:embedding_structure} we can find disjoint $B_1, \dots, B_h \subseteq V(T)$ together with $B_{i, 0} \subseteq \dots \subseteq B_{i, w} \subseteq B_i$ for each $i \in [h]$ such that for each $i \in [h]$ and $j \in [w]$ we have
    \begin{enumerate}
        \item $m_i' \leq |B_{i, j}| \leq b_i'$, \emph{and}
        \item $(B_{i-1} \cup B_{i, j-1}, \emptyset)$ is a $(B_{i, j}, \Delta, s'_i, \delta_{w, \Delta} \binom{m_i' / 3}{\Delta})$-out-community.
        %\YW{I think there is an error here that affects a lot of stuff. When we apply \cref{theorem:embedding_structure}, we only get a linear factor of $\delta$ in the number of bad sets. But as written here we want a power of $\delta$ greater than $1$, it seems (because of the definition of $\delta_{w,\Delta}$).}\PM{I changed the definition of $\delta_{4.1}$, this should work now}
    \end{enumerate}
    We aim to show that we can embed $G$ into this structure, which will prove $\vv{r}(G) \leq 3^{57\Delta w} |V(G)|$.

    To that end, we first rename the $B_{i, j}$'s to make it easier to formulate our embedding strategy.
    For each $i \in [H]$ we let
    \[
        A_i = B_{\lceil i/w \rceil, ((i -1) \text{ mod } w) + 1}
    \]
    and let $A_i = \emptyset$ for all $i  <1$.
    Similarly, once again for all $i \in [H]$, we let
    \[
        n_i = n_{\lceil i/ w \rceil}', \qquad s_i = s_{\lceil i/ w \rceil}', \qquad m_i = m_{\lceil i/ w \rceil}' \qquad \text{and} \qquad b_i = b_{\lceil i/ w \rceil}'.
    \]
    We note that by the two properties of the sets $B_i$ and $B_{i, j}$ we immediately get that for each $i \in [H]$
    \begin{enumerate}
        \item $m_i \leq |A_i| \leq b_i$, \emph{and}
        \item $(A_{i-w} \cup \dots \cup A_{i-1}, \emptyset)$ is an $(A_i, \Delta, s_i, \delta_{w, \Delta}\binom{m_i/3}{\Delta})$-out-community.
    \end{enumerate}
    Indeed, for the second property notice that if $i = aw + b$ for some $0 \leq a \leq h$ and $0 \leq b < w$, i.e., $A_i = B_{a, b+1}$, then $A_{i-w} \cup \dots \cup A_{i - 1} \subseteq B_{a-1} \cup B_{a, b}$.
    For future reference, we also have that $|A_{i-j}| \geq m_i / 2 \geq m_i/3 + \Delta$ for each $j \in [w]$.

    We now want to embed the $V_i$'s into the corresponding $A_i$'s; we do it inductively, using \cref{lemma:forward_step} and starting with embedding $V_1$ into $A_1$.
    More specifically, we let $P_i = \bigcup_{j=1}^i V_i$ be the set of vertices of $G$ we have already embedded up to step $i \in [H]$ and, for each $v \in V(G)$, we let $P_i(v) = N^-_G(v) \cap P_i$ denote the set of in-neighbors of $v$ that we have already embedded at that step.
    Moreover, we let $r_i(v) = |N^-_G(v) \setminus P_i(v)|$ denote the number of in-neighbors of $v$ that still have not been embedded up to step $i$.
    We also define \( k_{i,j} = \min \{ w, j - i - 1 \} \) for each \( 0 \leq i < j \leq H \).  
    In other words, at each step $i$, for each unembedded layer $V_j$, $k_{i, j}$ represents the number of unembedded layers that could potentially send an edge to $V_j$.  
    
    For each $i = 0, \dots, H$, we will find a partial embedding $\phi_i : P_i \to \bigcup_{j=1}^i A_i$ such that
    %\YW{$j$ is used for two different things in this list, sometimes we have $i<j$ and sometimes $j \in [i]$}
    \begin{enumerate}
        \item $\phi_i$ is an embedding of $G[P_i]$ into $T[\bigcup_{j'=1}^i A_{j'}]$,
        \item $\phi_i(V_{j'}) \subseteq A_{j'}$ for each $j' \in [i]$, \emph{and}
        \item for each $i < j \leq H$ and each $v \in V_j$ we have that $(A_{j-k_{i,j}} \cup \dots \cup A_{j-1}, \phi_i(P_i(v)))$ is an \newline $(A_j, r_i(v), s_j, \delta_{k_{i, j}, r_i(v)}\binom{m_j/3}{r_i(v)})$-out-community.
    \end{enumerate}
    We note that condition 3 in particular implies that for each $i \in [H-1]$ and $v \in V_{i+1}$ we have $|N_T^+(\phi_i(V_i)) \cap A_{i+1}| \geq s_{i+1}$, i.e., there are at least $s_{i+1}$ vertices in $A_{i+1}$ where we can embed $v$ to extend $\phi_i$.

    We first check that for $i = 0$ condition 3 indeed holds.
    To that end, fix $j \in [H]$ and $v \in V_j$.
    Note that $|P_0(v)| = 0$ and $r_0(v) \leq \Delta$ since $G$ has maximum degree at most $\Delta$.
    Let $E$ be the set of all $r_0(v)$-subsets $S\subseteq A_{j - k_{0, j}} \cup \dots \cup A_{j-1}$ such that $|N^+(S) \cap A_j| \leq s_j$.
    Moreover, let $C$ be the set of all $r_0(v)$-subsets $S \subseteq A_{j - k_{0, j}} \cup \dots \cup A_{j-1}$ such that $(A_{j - k_{0, j}} \cup \dots \cup A_{j-1}, S)$ is not an $(A_j, \Delta - r_0(v), s_j, \binom{m_j/3}{\Delta - r_0(v)})$-out-community.
    Clearly, $E \subseteq C$ and moreover, by \cref{observation:community_extension_density}, $|C| \leq \delta_{w, \Delta} \binom{m_j/3}{r_0(v)} \leq \delta_{k_{0, j}, r_0(v)} \binom{m_j/3}{r_0(v)}$.
    In particular, we get that $(A_{j - k_{0, j}} \cup \dots \cup A_{j-1}, \phi(P_0(v)))$ is an $(A_j, r_0(v), s_j, \delta_{k_{0, j}, r_0(v)} \binom{m_j/3}{r_0(v)})$-out-community as required. 
    %\YW{in this paragraph there are some instances of $r_i(v)$, but I think they should all be $r_0(v)$?}
    %\YW{It seems like we never finish the proof of the $i=0$ case}

    Now, assume that for some $i \in [H]$ we have already found $\phi_{i-1}$ satisfying the above conditions.
    We want to extend it to $\phi_i$ using \cref{lemma:forward_step}.
    To that end, we define $N_i = \bigcup_{u \in V_i} N^+(u)$, and note that since $G$ has graded bandwidth $w$, we have $N_i \subseteq V_{i+1} \cup \dots \cup V_{i+w}$.
    We let $D$ be the bipartite graph obtained from $G[V_i \cup N_i]$ by removing the orientation of the edges.
    For each $u \in V_i$, we also let $f(u) = (N_T^+(\phi(P_{i-1}(u))) \cap A_i)\setminus \phi(P_{i-1})$ denote the set of vertices from $A_i$ we can embed $u$ into so that we get a valid extension of $\phi_{i-1}$.
    Notice that  $A_{i}$ is disjoint from all $A_{i'}$ with $i' < j-w$ and that for all $j-w \leq i' \leq i $ we have $|V_{i'}| \leq 2n_i$.
    Therefore, by condition 3 for $\phi_{i-1}$, we get that $|f(u)| \geq 64wn_i - 2wn_i \geq 32wn_i \geq 32|V_i|$ for each $u \in V_i$.

    For each $i+ 1\leq j \leq i + w$ and each $v \in V_{j}$ we now set ${\cal{F}}_v$ as the set of all $S \subseteq A_i$ such that $(A_{i+1} \cup \dots \cup A_{j-1}, \phi(P_{i-1}(v))\cup S)$ is not an $(A_{j}, \Delta, s_{j}, \delta_{k_{i, j}, r_i(v)} \binom{m_{j}/3}{\Delta})$-out-community.
    Note that $|N_D(v)| = r_{i-1}(v) - r_i(v).$ Thus, since $|A_i \setminus \phi(P_i(v))| \geq |A_i| - \Delta \geq m_{j}/3$, by \cref{observation:community_extension_density} we get that ${\cal{F}}_v$ is upward closed and has $|N_D(v)|$-density at most
    \[ \frac{\delta_{k_{i-1,j}, r_{i-1}(v)} \binom{m_j/3}{r_{i-1}(v)}}{\delta_{k_{i,j}, r_i(v)}} = \frac{1}{4\Delta^2} \left(2^{-1/2} \frac{s_1'}{2b_1'}\right)^{|N_D(v)|} \le \frac{1}{4\Delta^2}\left(2^{-1/2} \frac{32wn_i}{|A_i|}\right)^{|N_D(v)|}. \]
    
    %\delta_{1, r_{i-1}(v) - r_i(v)}\binom{m_{j} / 3}{r_{i-1}(v) - r_{i}(v)}$.
    %\]
    
%    Consequently, since $r_{i-1}(v) - r_i(v) = |N_D(v)|$, we get that the $|N_D(v)|$-density of ${\cal{F}}_v$ is at most $\frac{1}{4\Delta^2} (2^{-1/2}\frac{s_1}{2b_1}) \leq \frac{1}{4\Delta^2}(2^{-1/2} \frac{32wn_i}{|A_i|})$.

    Therefore, by \cref{lemma:forward_step} we can find an injection $\phi: V_i \to A_i$ such that $\phi(u) \in f(u)$ for each $u \in V_i$ and $\phi(N_D(v)) \notin {\cal{F}}_v$ for each $v \in N_i$.
    In particular, by our choice of $f$ and the ${\cal{F}}_v$'s, the embedding $\phi_i: P_i \to \bigcup_{j=1}^i A_i$ defined by
    \[
        \phi_i(u) = \begin{cases}
            \phi_{i-1}(u), & u \in P_{i-1}\\
            \phi(u), & u \in V_i
        \end{cases}
    \]
    satisfies all of the above conditions.
    At the end of this process, we have constructed an embedding $\phi_H:P_H \to T$. Since $P_H=G$, we conclude that $G \subseteq T$, concluding the proof.
\end{proof}

\section{Upper bound for graded digraphs}\label{section:graded_upper_bound}
By plugging in $w=1$ to \cref{theorem:main_theorem}, we immediately get that $\vv{r}(G) \leq 3^{57\Delta}|V(G)|$ for any graded digraph with maximum degree $\Delta$.
However, for this special case, the problem is actually much simpler than for digraphs with graded bandwidth $w \geq 2$, which is why we are able to prove the stronger bound stated in \cref{theorem:easy_upper_bound}.
In particular, finding sets $A_1, \dots, A_H$ in the host tournament such that for each $i \in [H-1]$ almost all $\Delta$-subsets of $A_{i}$ have many common out-neighbors in $A_i$ is possible without the detour through the sets $B_i$ and $B_{i, j}$, which was needed for the general case.
In fact, having found $A_{i+1}, \dots, A_H$ --- again making sure that they are placed correctly in the median order of the host tournament --- we are able to find $A_i$ with just a single application of dependent random choice.
Another observation --- which this time also applies for the general case as well --- is that in the proof of \cref{theorem:main_theorem} the role of the in- and out-degree are actually asymmetric.
These two observations allow us to replace $\Delta$ in the exponent with the maximum in-degree $\Delta^-$ and significantly improve the constant in front of it.

As stated before, instead of proving \cref{theorem:easy_upper_bound}, we will prove a stronger theorem that also leverages the fact that the graded digraph could locally have different maximum in-degrees in different parts.
This will allow us to use less overhead to embed the parts where the in-degree is small, and is particularly useful for graded digraphs in which the large in-degree only appears in parts of the graded partition whose sizes are small, as is the case for the oriented hypercube $\vv{Q_d}$.
More formally, we can prove the following theorem, which immediately implies \cref{theorem:easy_upper_bound} by upper-bounding each $\Delta_i^-$ by $\Delta^-$.
\begin{theorem}\label{theorem:graded_upper_bound}
    Let $G$ be a graded digraph with a graded partition $V(G) = V_1 \cup \dots \cup V_H$ for some $H \in \mathbb{N}$ and maximum in- and out-degree $\Delta^-$ and $\Delta^+$ respectively. 
    Moreover, for each $i\in [H-1]$ let $\Delta^-_i$ be the maximum in-degree in the induced subgraph $G[V_i \cup V_{i+1}]$ and set $\Delta^-_0 = \Delta^-_{H} = 0$.
    Then $$\vv{r}(D) \leq 10^9 (\Delta^-)^2 \Delta^+ \sum_{i=1}^H 2^{2(\Delta_{i-1}^- + \Delta_{i}^-) } |V_i|.$$
\end{theorem}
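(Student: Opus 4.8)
The plan is to follow the two-phase strategy behind the proof of \cref{theorem:main_theorem} --- first locate a suitable ``embedding structure'' $A_1,\dots,A_H$ inside the host tournament, then embed the layers $V_1,\dots,V_H$ of $G$ into it --- but to exploit two simplifications available when the graded bandwidth equals $1$. Since each vertex of $G$ has \emph{all} of its in-neighbours in a single layer, we can dispense with the auxiliary sets $B_i,B_{i,j}$ of \cref{theorem:embedding_structure}: we build $A_H,A_{H-1},\dots,A_1$ directly, one at a time, each obtained from its successor by a single application of \cref{observation:median_order} followed by a single application of (a directed form of) \cref{lemma:dependent_random_choice}. Moreover, in the embedding phase the bookkeeping of condition~3 in the proof of \cref{theorem:main_theorem} collapses: at the step where we place $V_i$ into $A_i$, the only layers whose community constraints are touched are $V_i$ itself and $V_{i+1}$; for every other layer the required property is the structural property of $A_1,\dots,A_H$ and is preserved automatically. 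Finally, we use the asymmetry between in- and out-degree: the out-degrees enter only through the conflict counts in the Lovász Local Lemma and so cost only polynomial factors, whereas the in-degree controls both the size of the set $K$ chosen in the dependent random choice step and the sizes of the subsets in the community conditions, and is responsible for the exponential factors. Carrying the \emph{local} in-degrees $\Delta_i^-$ (rather than the global $\Delta^-$) through all of these estimates is exactly what produces the refined bound.

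For the structure phase, fix the host tournament $T$ and a median order $v_1,\dots,v_N$ of it. Define ``demands'' $q_i$ of order $\Theta\big((\Delta^-)^2\Delta^+\cdot 2^{2(\Delta_{i-1}^-+\Delta_i^-)}|V_i|\big)$ and set $a_i=\sum_{k}q_k/2^{|i-k|}$; this geometric convolution guarantees $a_i\ge q_i$, $a_i\ge\tfrac12\max\{a_{i-1},a_{i+1}\}$, and $\sum_i a_i\le 4\sum_i q_i$. Place $A_H$ at the right end of the median order inside an interval of length $a_H$. Having found $A_{i+1}$ inside an interval $[o_{i+1},o_{i+1}+a_{i+1})$, use \cref{observation:median_order} (together with $a_i\ge a_{i+1}/2$) to find, just to its left, an interval $I_i$ of length $a_i$ with $d(I_i,A_{i+1})\ge 1/3$; then apply dependent random choice with $L=I_i$, $R=A_{i+1}$, subset parameter $\Delta_i^-$, threshold $\tau_{i+1}$, empty auxiliary family, and a set $K\subseteq A_{i+1}$ of size $k_i=\Theta(\Delta_i^-)$ to obtain $A_i\subseteq I_i$ with $|A_i|\ge\tfrac12 3^{-k_i}a_i$ such that $(A_i,\emptyset)$ is an $(A_{i+1},\Delta_i^-,\tau_{i+1},\varepsilon_i)$-out-community with a tiny error $\varepsilon_i$; here the thresholds $\tau_j$ are chosen a constant factor above $|V_j|$ but sufficiently below $|A_j|$. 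Tracking the starting indices $o_i$ as in the proof of \cref{theorem:embedding_structure} keeps the $A_i$ pairwise disjoint and shows that $\sum_i 6a_i$ vertices suffice, which by $\sum_i a_i=O(\sum_i q_i)$ is the claimed bound up to the polynomial factor.

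For the embedding phase, embed $V_1,\dots,V_H$ into $A_1,\dots,A_H$ one layer at a time, maintaining the invariant that for every not-yet-embedded $v$ the already-embedded in-neighbours of $v$ have large common out-neighbourhood in the relevant $A_j$, and more robustly that the corresponding community conditions hold (the $w=1$ specialisation of condition~3 in the proof of \cref{theorem:main_theorem}). To pass from $\phi_{i-1}$ to $\phi_i$ apply \cref{lemma:forward_step} with $W_1=V_i$, $W_2=V_{i+1}$, $A=A_i$: the hypothesis $|f(u)|\ge b$ for $u\in V_i$ comes from the out-community of $A_{i-1}$ with respect to $A_i$ (subset parameter $\Delta_{i-1}^-$) maintained at the previous step, the sparsity of the families $\mathcal F_v$ for $v\in V_{i+1}$ comes from the tiny error $\varepsilon_i$ of the out-community of $A_i$ with respect to $A_{i+1}$ (using \cref{observation:community_extension_density} and \cref{observation:up_closed_density} to pass from a $\Delta_i^-$-density bound to the $|N_D(v)|$-density bound that \cref{lemma:forward_step} requires), and the output $\phi_i$ again satisfies the invariant. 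I expect the main obstacle to be the simultaneous choice of the lengths $a_i$, the sizes $k_i$, and the thresholds $\tau_i$: unwinding the dependent random choice estimate and the community arithmetic, the requirement that each $\varepsilon_i$ be small enough for \cref{lemma:forward_step} reduces --- after the sizes $|A_i|$ and $\tau_i$ largely cancel --- to conditions of the form $|A_{i+1}|/\tau_{i+1}\gtrsim 2^{\Theta(\Delta_i^-)}$ and $a_i\gtrsim 3^{\Theta(\Delta_i^-)}|A_i|$, which chained with $\tau_i\gtrsim|V_i|$ force $a_i\gtrsim 2^{\Theta(\Delta_{i-1}^-+\Delta_i^-)}|V_i|$, exactly the shape of the demands $q_i$. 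Getting the constant $2$ in the exponent (and hence the factor $2^{4\Delta^-}$ rather than a worse power when one specialises to \cref{theorem:easy_upper_bound}) requires optimising the concrete constants: the exact size of $K$, the base $1/3$ from \cref{observation:median_order}, and the $2^{-1/2}$ appearing in \cref{lemma:forward_step}. This constant-tracking, together with the geometric-smoothing estimates and the median-order disjointness bookkeeping, is the bulk of the work; conceptually it is identical to, but strictly simpler than, the proof of \cref{theorem:main_theorem}.
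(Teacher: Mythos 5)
Your proposal is correct and follows essentially the same route as the paper's proof in \cref{section:appendix}: there the sets $A_H,\dots,A_1$ are built one at a time via the median order plus a single application of \cref{lemma:dependent_random_choice} (packaged as \cref{lemma:graded_backward_step}), the interval lengths are smoothed geometrically exactly as in your choice of $a_i$, and the layers are embedded one by one with \cref{lemma:forward_step}, exploiting precisely the in-/out-degree asymmetry and the local in-degrees $\Delta_i^-$ you describe. The one point you leave as ``constant optimisation''---getting base $2$ rather than $3$ in the exponent---is resolved in the paper by pigeonholing over $\ell = 4\Delta^-+4$ subintervals of the median order so that the density to $A_{i+1}$ is at least $\frac{\ell-1}{2\ell}\geq\frac{1}{2+2/\Delta^-}$ rather than $\frac13$, which after the dependent random choice step yields the factor $(2+\varepsilon)^{2\Delta_i^-}\leq e^{O(1)}2^{2\Delta_i^-}$ per layer.
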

We defer the proof of \cref{theorem:graded_upper_bound} to \cref{section:appendix}, due to its similarity to the proof of \cref{theorem:main_theorem}.
We also note that \cref{theorem:graded_upper_bound} immediately implies the upper bound on the oriented Ramsey number of the hypercube.
\begin{proof}[Proof of  \cref{thm:hypercube}]
    Let $d \in \mathbb{N}$ and $\vv{Q_d}$ be the $d$-dimensional hypercube on the vertex set $V = \{ 0,1 \}^d$.
    Moreover, for each $0\leq i \leq d$, let $V_i := \{ v\in V : \sum_j v_j = i \}$.
    Then $V = V_0 \cup \dots \cup V_d$ is a graded partition of $\vv{Q_d}$ and for each $i = 0, \dots, d-1$ the maximum in-degree in the induced subgraph $\vv{Q_d}[V_i \cup V_{i+1}]$ is $i+1$.
    Therefore, by Theorem \ref{theorem:graded_upper_bound},
    \[
        \vv{r}(\vv{Q_d}) \leq C' d^3 \sum_{i=0}^d \binom{d}{i} 4 \cdot 2^{4i} = C d^3 (16 + 1)^d = C d^3 17^d,
    \]
    for some absolute constants $C'$ and $C$.
\end{proof}

\section{Lower bound for graded digraphs}\label{section:lower_bound}
\subsection{Proof outline}
In this section, we prove \cref{theorem:lower_bound}, which states that there exists a graded digraph $G$ with height $h$, maximum degree $\Delta$, and equal number of vertices in each part of the graded partition, such that $\vv r(G) \geq c^\Delta \ab{V(G)}$ for an absolute constant $c>1$.

The main ingredient of the proof is the same statement in the case $h=2$, from which the general statement will follow.
In other words, we first show that exists a bipartite digraph $D_0$ with vertex classes of size $n$ each and maximum degree at most $\Delta$ such that $\vv{r}(D_0) \geq c^\Delta \cdot(2n)$.

Having found such a bipartite digraph, we will generalize the construction for any height $h$, by taking a graded digraph $G$ such that the induced subgraph between two neighboring parts of the graded partition is a copy of $D_0$.
We will show that such a $G$ is not contained in a tournament $T$ obtained by replacing each vertex of a transitive tournament on $H = h/2 - 1$ vertices with a copy of $R$, a large tournament not containing $D_0$.

Indeed, if we let $G$ have the graded partition $V(G) = V_1 \cup \dots \cup V_h$ and let $V(T) = A_1 \cup \dots \cup A_{H}$ such that each $T[A_i]$ is a copy of $R$ and all the edges go from $A_i$ to $A_j$ for $i < j$, then we show the following.
If an embedding of $G$ into $T$ exists, then for any $i$, if we embedded most of $V_i$ into $A_j \cup \dots \cup A_H$ for some $j$, then most of $V_{i+2}$ must be embedded into $A_{j+1} \cup \dots \cup A_H$.
This will give us a contradiction since there are only $H < h/2$ levels in the tournament $T$.

It remains to construct $D_0$ and $R$ such that the argument above works. We will use a construction very similar to the one of Graham, Rödl and Ruciński \cite{MR1832445}. Namely, we show that if we take $R$ to be a blow-up of a random tournament and $D_0$ a sparse random bipartite digraph, then with positive probability $R$ does not contain a copy of $D_0$.
To make the generalization for any height possible, we will in fact need to show a slightly stronger statement, namely that if we take any two large subsets $A'$ and $B'$ of the two vertex classes of $D_0$, then $R$ does not contain a copy of the induced subgraph $D_0[A' \cup B']$.

The remainder of this section contains the details of the argument. We first construct a suitable bipartite digraph $D_0$ and a suitable tournament $R$ in \cref{section:GRR_lemmas}; these constructions are encapsulated in two technical lemmas, which are very similar to ones appearing in \cite{MR1832445}.
Then in  \cref{section:bipartite_case} we show that $R$ indeed does not contain a copy of $D$.
Finally, in \cref{section:proof_lower_bound} we give a proof of \cref{theorem:lower_bound} by generalizing the construction to all heights.
\begin{remark}
    As in \cite{MR1832445}, we quantify the respective sizes of the sets and other quantities with concrete numerical values.
    For example, ``large'' subsets of the vertex classes $A$ and $B$ of $D_0$ will mean $A' \subseteq A$ and $B' \subseteq B$ of sizes at least $0.98|A|$ and $0.98|B|$, respectively.
    However, we want to stress that the actual numerical values are not of much importance; what matters is that the dependencies between them work out correctly.  
\end{remark}

\subsection{Technical lemmas for the bipartite case}\label{section:GRR_lemmas}
We begin by constructing the bipartite digraph $D_0$, which we will require to have bounded degree and satisfy certain pseudorandom properties. The following lemma is a slight generalization of \cite[Lemma 3]{MR1832445}, and states that a graph with such properties can be constructed randomly. We use the notation $e_H(X,Y)$ to denote the number of pairs in $X \times Y$ that are edges of $H$.
\begin{lemma}\label{lemma:guest_graph}
    There exist constants $c_0 > c_1 > 1$ and $\Delta_0$ such that for each $\Delta \geq \Delta_0$ and $n \geq (c_0)^{2\Delta}$ there exists a bipartite graph $H$ with vertex classes $X$ and $Y$ of size $n$ and maximum degree at most $\Delta$ such that the following hold, where $k=(c_0)^{\Delta}$.
    \begin{enumerate}
        \item For all partitions $X = X_1 \cup \dots \cup X_k \cup D_X$ and $Y = Y_1 \cup \dots \cup Y_k \cup D_Y$ with $|X_i|, |Y_i| \leq (c_1 / c_0)^\Delta n$ and $|D_X|, |D_Y| \leq 0.02n$, we have
        \[
            \sum_{i \neq j: e_H(X_i, Y_i) > 0} |X_i||Y_j| > 0.55(0.98n)^2.
        \]
        \item For all $X' \subseteq X$ and $Y' \subseteq Y$ such that $|X'|, |Y'| \geq 0.01n$, we have $e_H(X', Y') > 0$.
    \end{enumerate}
\end{lemma}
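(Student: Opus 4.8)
The plan is to construct $H$ as a random bipartite graph and verify both properties hold with positive probability. Specifically, I would take $H$ to be the bipartite graph on vertex classes $X, Y$ with $|X| = |Y| = n$ where each pair $(x,y) \in X \times Y$ is an edge independently with probability $p = \Delta / (2n)$ (or some similar constant times $\Delta/n$). Then the expected degree of each vertex is $\Delta/2$, and a standard Chernoff bound plus union bound over the $2n$ vertices shows that with probability $1 - o(1)$ every vertex has degree at most $\Delta$, provided $\Delta$ is large enough (so that Chernoff gives a bound like $e^{-c\Delta}$, which beats the $2n \le 2 c_0^{\text{poly}(\Delta)} \cdot \dots$ — wait, here $n$ can be as large as we like, so one must be slightly careful: degree concentration must hold with probability $1 - o(1/n)$, which Chernoff does give since the deviation from $\Delta/2$ to $\Delta$ is a constant factor and $n \ge c_0^{2\Delta}$ makes $\Delta = O(\log n)$, so $e^{-c\Delta} = n^{-c'}$; alternatively just condition on the degree bound and absorb the loss).

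The core of the argument is showing properties 1 and 2 hold with high probability. For property 2, fix subsets $X' \subseteq X$, $Y' \subseteq Y$ with $|X'|, |Y'| \ge 0.01n$. The probability that $e_H(X', Y') = 0$ is $(1-p)^{|X'||Y'|} \le (1-p)^{(0.01n)^2} = e^{-\Theta(\Delta n)}$. Union-bounding over all $\le 4^n$ choices of such pairs $(X', Y')$ costs a factor $4^n = e^{O(n)}$, which is dominated by $e^{-\Theta(\Delta n)}$ since $\Delta \ge \Delta_0$ is large. So property 2 fails with probability $e^{-\Omega(\Delta n)} = o(1)$. For property 1, the natural approach mirrors \cite[Lemma 3]{MR1832445}: the quantity $\sum_{i \ne j : e_H(X_i, Y_i) > 0} |X_i| |Y_j|$ should be thought of as follows. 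The full sum $\sum_{i \ne j} |X_i||Y_j| = (\sum_i |X_i|)(\sum_j |Y_j|) - \sum_i |X_i||Y_i| \ge (0.98n)^2 - \max_i|X_i| \cdot \sum_j |Y_j| \ge (0.98n)^2 - (c_1/c_0)^\Delta n \cdot n$, which is at least, say, $0.97 (0.98n)^2$ once $(c_1/c_0)^\Delta$ is small enough. So it suffices to show that the "defective" contribution from pairs $i \ne j$ with $e_H(X_i, Y_i) = 0$ is at most $0.4 (0.98 n)^2$ or so. A pair $(X_i, Y_i)$ with both sets of size close to $(c_1/c_0)^\Delta n$ has $e_H(X_i,Y_i) = 0$ with probability $(1-p)^{|X_i||Y_i|}$, which is tiny when $|X_i||Y_i|$ is large, but not when one of them is small. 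So one splits the index set into "large" indices (where $|X_i|, |Y_i|$ are both $\ge \epsilon_0 (c_1/c_0)^\Delta n$ for a suitable small $\epsilon_0$, or perhaps $\ge n/k^2$) and "small" indices. The total mass $\sum |X_i|$ over small indices is at most $k \cdot \epsilon_0 (c_1/c_0)^\Delta n$, which is at most a small constant times $n$ by choice of $\epsilon_0$ since $k (c_1/c_0)^\Delta = (c_1)^\Delta$ — hmm, that's large, so one actually wants the small threshold to be $\le \frac{\text{const}}{k} \cdot n$ so that the small indices contribute total mass $\le \text{const}\cdot n$. For the large indices, $e_H(X_i, Y_i) = 0$ occurs with probability $\le (1-p)^{(\text{const}\, n/k)^2} = e^{-\Theta(\Delta n^2/k^2)}$; union-bounding over all partitions (of which there are at most $(k+1)^{2n} = e^{O(n \log k)} = e^{O(\Delta n)}$ since $\log k = \Theta(\Delta)$) requires $\Delta n^2 / k^2 \gg \Delta n$, i.e.\ $n \gg k^2 = c_0^{2\Delta}$, which is exactly our hypothesis $n \ge c_0^{2\Delta}$ (and one picks the constants so the inequality is strict with room to spare).

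The main obstacle I anticipate is the delicate balancing of the numerical constants: $c_0$ must be large enough that $k = c_0^\Delta$ makes the "large index" threshold $n/k^2$ big enough for the Chernoff/union-bound to work (needing $n \gtrsim k^2$, forcing $n \ge c_0^{2\Delta}$ exactly as stated), while $c_1/c_0 < 1$ must be small enough that the blocks $X_i, Y_i$ of size $\le (c_1/c_0)^\Delta n$ are individually small, the total "small index" mass $\sum_{\text{small } i}|X_i| \le \text{(threshold)} \cdot k$ is a genuinely small fraction of $n$, and the "diagonal" correction $\sum_i |X_i||Y_i| \le (c_1/c_0)^\Delta n^2$ is negligible compared to $(0.98n)^2$. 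One must choose $\Delta_0$ last, large enough that all the $e^{-\Omega(\Delta)}$-type error terms and the requirement "$\Delta$ large" in property 2's union bound are satisfied simultaneously. The probabilistic estimates themselves are routine (Chernoff for degrees, union bound over partitions for property 1, union bound over pairs for property 2); organizing the constants is the real work, and following the template of \cite[Lemma 3]{MR1832445} closely — which proves essentially this statement without the slightly more general "large subset" flavor of property 2 — should make it manageable, with property 2 being the genuinely new (but easy) addition.
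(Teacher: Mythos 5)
Your reduction of property 1 to ``the total mass on edgeless pairs is small'' and your treatment of property 2 are fine, but the probabilistic step you propose for property 1 does not work, and the error is hidden by a miscalculated exponent. With $p=\Theta(\Delta/n)$, a pair of blocks each of size $\Theta(n/k)$ is edgeless with probability $(1-p)^{\Theta(n^2/k^2)}=e^{-\Theta(\Delta n/k^2)}$, not $e^{-\Theta(\Delta n^2/k^2)}$ as you wrote. After this correction, your ``large versus small blocks'' union bound is hopeless: beating the $e^{\Theta(n\log k)}=e^{\Theta(\Delta n\log c_0)}$ partitions would require $\Delta n/k^2\gg \Delta n\log c_0$, i.e.\ $k^2\log c_0\ll 1$, which is false since $k=c_0^{\Delta}$. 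Nor can you raise the ``large'' threshold: uniformly over all set pairs, a random graph with $\Theta(\Delta n)$ edges only guarantees an edge between every two sets of size roughly $\Omega(n\log\Delta/\Delta)$, whereas every block has size at most $(c_1/c_0)^{\Delta}n$, which is exponentially smaller; so for large $\Delta$ every block is ``small'' and the bound $k\cdot(\text{threshold})$ on the small-block mass is vacuous. The argument that actually works (and is what the paper and \cite{MR1832445} do, so your claim of mirroring that template is off at the key step) is aggregated rather than per-pair: if a partition violates property 1, then since the diagonal term $\sum_i|X_i||Y_i|\le (c_1^2/c_0)^{\Delta}n^2$ is negligible, the edgeless pairs carry total mass at least $\approx 0.2n^2$, a constant fraction of all potential edges, and the probability that a \emph{fixed} family of pairs of total mass $0.2n^2$ is simultaneously edgeless is $e^{-\Theta(\Delta n)}$. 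This beats the union bound $(k+2)^{2n}2^{k^2}$ only because $c_0$ is chosen extremely close to $1$ (the paper takes $c_0<(5/4)^{1/202}$, so that $e^{-\Theta(\Delta n)}$ dominates $k^{2n}=e^{2\Delta n\ln c_0}$), and the hypothesis $n\ge c_0^{2\Delta}=k^2$ is used only to absorb the $2^{k^2}$ factor counting which pairs are edgeless --- not, as in your sketch, to make individual blocks large enough for a per-pair bound.

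A secondary but real issue is the degree condition. In the independent model with $p=\Delta/(2n)$, the probability that a fixed vertex exceeds degree $\Delta$ is $e^{-c\Delta}$, a constant independent of $n$; the hypothesis $n\ge c_0^{2\Delta}$ only upper-bounds $\Delta$ in terms of $n$, so $e^{-c\Delta}$ need not be $o(1/n)$ (your inequality goes the wrong way), and for constant $\Delta$ and huge $n$ a positive proportion of vertices violates the bound, so ``whp maximum degree at most $\Delta$'' is simply false. Your fallback of conditioning on the degree event can be salvaged, since that event has probability at least $e^{-O(ne^{-c\Delta})}\ge e^{-n}$, which is affordable against failure probabilities of the form $e^{-\Theta(\Delta n)}$, but this estimate has to be made. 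The paper avoids the issue differently: it takes classes of size $1.01n$, uses the model with exactly $\Delta m/101$ edges so that deterministically fewer than $n/100$ vertices per side have degree above $\Delta$, and deletes those vertices --- which is precisely why the exceptional sets $D_X,D_Y$ and the constants $0.98$ and $0.02$ appear in the statement of \cref{lemma:guest_graph}.
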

The proof of \cref{lemma:guest_graph} is a standard union bound argument, and we include it in \cref{appendix:GRR} for completeness.

Our next lemma is of a similar flavor to \cref{lemma:guest_graph}, showing that a random object typically satisfies a certain pseudorandom property, and is very similar to \cite[Lemma 4]{MR1832445}. In this case, we show that a random tournament typically has many directed edges from any large set to any other large set. We actually prove something slightly more general, which says the same not for sets, but for ``weighted sets'', that is, for functions valued in $[0,1]$. Here, and in the rest of the proof, all logarithms are to base $e$.
\begin{lemma}\label{lemma:host_graph}
    Let $k \geq 2$ and $x \geq (10^8 \log k) /2$. There exists a tournament $R$ with vertex set $[k]$ such that for all pairs of weight functions $f, g: [k] \to [0,1]$ with $f+g \leq 1$ and $\sum_{i=1}^k (f(i) + g(i)) = 2x$, we have
    \[
        W \coloneqq \sum_{ij \in E(R)} f(i)g(j) \leq 0.51x^2.
    \]
\end{lemma}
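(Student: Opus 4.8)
The plan is to show that a uniformly random tournament $R$ on vertex set $[k]$ works with positive probability. First I would reduce the statement to a finite check: although $f$ and $g$ range over continuous weight functions, the quantity $W = \sum_{ij \in E(R)} f(i) g(j)$ is bilinear in $(f,g)$, and for a fixed tournament $R$ the constraint set $\{(f,g) : 0 \le f, g,\ f+g \le 1,\ \sum_i (f(i)+g(i)) = 2x\}$ is a polytope; the maximum of a bilinear form over a product of polytopes (after fixing one argument) is attained at a vertex, so it suffices to control $W$ when $f$ and $g$ are $\{0,1\}$-valued, i.e.\ indicator functions of disjoint sets $A, B \subseteq [k]$ with $|A| + |B| = 2x$. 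Actually one must be slightly careful because the two polytopes are coupled through $f + g \le 1$ and the sum constraint, but a standard argument (fix $f$, optimize the linear function $g \mapsto W$ over the slice, land at an extreme point which is $\{0,1\}$-valued off the support of $f$; then symmetrically optimize $f$) shows the maximum is attained at a pair of disjoint sets, possibly after allowing $|A| + |B| \le 2x$, which only helps. So it is enough to prove: with positive probability, for all disjoint $A, B \subseteq [k]$, $e_R(A,B) \le 0.51 \cdot \tfrac{|A||B|}{?}$ — wait, I need to match normalizations: $W = e_R(A,B)$ when $f = \mathbf{1}_A, g = \mathbf{1}_B$, and $x^2 \ge \big(\tfrac{|A|+|B|}{2}\big)^2 \ge |A||B|$, so it suffices that $e_R(A,B) \le 0.51 |A||B|$ for all disjoint $A,B$ with $|A|,|B| \ge$ (something), handling small sets separately.

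Next I would run the union bound. For fixed disjoint $A, B$ with $|A| = a$, $|B| = b$, the edges of $R$ between $A$ and $B$ are oriented independently and uniformly, so $e_R(A,B) \sim \mathrm{Bin}(ab, 1/2)$. By a Chernoff bound, $\Pr[e_R(A,B) > 0.51 ab] \le \exp(-c \cdot ab)$ for an absolute constant $c > 0$ (concretely $c$ around $(0.01)^2 \cdot 2 = 2 \cdot 10^{-4}$ from the standard estimate $\Pr[\mathrm{Bin}(N,1/2) > (1/2 + \varepsilon)N] \le e^{-2\varepsilon^2 N}$). The number of choices of $(A,B)$ with $|A| = a, |B| = b$ is at most $\binom{k}{a}\binom{k}{b} \le k^{a+b} = e^{(a+b)\log k}$. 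So the failure probability is at most
\[
    \sum_{a, b} e^{(a+b)\log k} e^{-c\, ab}.
\]
When both $a, b$ are at least some threshold $t = \Theta(\log k)$, we have $cab \ge \tfrac{c}{2}(a+b) \cdot \min(a,b) \ge \tfrac{c t}{2}(a+b)$, which dominates $(a+b)\log k$ provided $t \ge (3/c)\log k$ or so, giving a geometrically decaying sum that is $o(1)$. The hypothesis $x \ge (10^8 \log k)/2$, i.e.\ $|A| + |B| = 2x \ge 10^8 \log k$, guarantees that we never need to worry about the regime where $\min(a,b)$ is below this threshold: if $\min(a,b) < t$ then $\max(a,b) > 2x - t \ge 10^8 \log k - t$ is enormous, but then $e_R(A,B) \le |A| \cdot |B| < t \cdot k$, and one checks directly that this is below $0.51 |A||B| \ge 0.51 \cdot (10^8 \log k - t) \cdot 1$... actually the cleaner route is: the bilinear maximum over the polytope, subject to $\sum(f+g) = 2x$ with $x$ large, cannot concentrate all the weight on a tiny set because $f, g \le 1$ forces the supports to have size $\ge x$ each. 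So in fact $\min(|A|,|B|) \ge$ something comparable to $x \ge 5 \cdot 10^7 \log k$ automatically, and we are always in the good regime of the union bound. I would make this precise by noting that in the reduction to $\{0,1\}$-valued functions, the sum constraint and the bound $f + g \le 1$ force $|A| \ge x$ and $|B| \ge x$ (or handle the boundary where one optimizes to $|A|+|B| < 2x$ by the same token, since reducing $x$ only tightens the claim).

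The main obstacle, and the step requiring the most care, is the reduction from arbitrary weight functions to indicator functions of sets — getting the bilinear-optimization-over-a-coupled-polytope argument exactly right, and in particular arguing that the optimal supports have size $\Theta(x)$ so that the Chernoff-plus-union-bound estimate is never applied in a regime where it is too weak. The probabilistic part itself is routine: it is a Chernoff bound with $N = ab \gtrsim x^2 \gtrsim (\log k)^2$ against a union over $\le e^{O(x \log k)}$ events, and the choice $x \ge (10^8 \log k)/2$ is comfortably large enough that $N \cdot c$ beats $(a+b) \log k$ with room to spare, so the whole sum is well below $1$. Once both pieces are in place, a random $R$ satisfies the conclusion with probability $> 0$, hence such an $R$ exists.
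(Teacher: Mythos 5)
Your overall strategy (take a uniformly random tournament, reduce to indicator functions, then Chernoff plus a union bound over pairs of sets) is the same as the paper's, but two steps are not correct as written. First, the reduction to $\{0,1\}$-valued $f,g$: the feasible region is \emph{not} a product of polytopes (the constraints $f+g\le 1$ and $\sum_i(f(i)+g(i))=2x$ couple $f$ and $g$), and its extreme points need not be integral --- indeed they cannot all be, since $2x$ need not be an integer; moreover, in your alternating step the extreme points of the slice $\{0\le g\le 1-f,\ \sum_i g(i)=2x-\sum_i f(i)\}$ take values in $\{0,1-f(i)\}$ (plus possibly one fractional coordinate), not in $\{0,1\}$, so iterating does not visibly terminate at a pair of disjoint indicator sets. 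The paper instead runs an explicit exchange argument: first shift mass so that $\min\{f(i),g(i)\}=0$ for every $i$, then consolidate so that each of $f$ and $g$ has at most one fractional coordinate; this only yields $W\le e_R(T,S)+2x$ for disjoint sets $T,S$ with $|T|+|S|\le 2x$, and the additive error $2x$ is then absorbed using the slack between $0.51x^2$ and $0.501x^2$ (legitimate since $x$ is large). Your reduction can be repaired along these lines, but ``the maximum is attained at a pair of disjoint sets'' is not justified as stated, and some such $+O(x)$ slack is genuinely needed.

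Second, your ``cleaner route'' claim that the constraints force \emph{each} support to have size at least $x$ is false: $f+g\le 1$ together with $\sum_i(f(i)+g(i))=2x$ only forces the \emph{union} of the supports to have size at least $2x$; for instance $f=$ the indicator of a set of size $2x-5$ and $g=$ the indicator of a disjoint set of size $5$ is feasible. So you cannot conclude that $\min(|A|,|B|)\gtrsim\log k$ automatically, and your fallback direct check for the unbalanced case is also off (comparing $e_R(A,B)\le |A||B|<tk$ against $0.51|A||B|$ is circular, and $tk$ need not be below $0.51x^2$ when $k\gg x$). The correct, one-line fix --- which is what the paper does via its observation that one may assume $t>s/7$ --- is that the unbalanced case is deterministically trivial: if $\min(|A|,|B|)\le x/4$ then $e_R(A,B)\le|A||B|\le\min(|A|,|B|)\cdot(|A|+|B|)\le(x/4)(2x)=x^2/2<0.51x^2$, no randomness needed; and in the remaining balanced regime $\min(|A|,|B|)\ge x/4\ge 10^7\log k$, where your Chernoff-plus-union-bound computation does go through comfortably. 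With these two repairs your argument becomes essentially the paper's proof.
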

Again, the proof of \cref{lemma:host_graph} is fairly standard, and we defer it to \cref{appendix:GRR}.

\subsection{The bipartite case}\label{section:bipartite_case}
We are now ready to prove \cref{theorem:lower_bound} in the case $h=2$, that is, when $G$ is bipartite. As discussed above, this step is actually the heart of the proof, as the proof for arbitrary $h$ will essentially be a reduction to this case.
\begin{lemma}\label{lemma:bipartite_lower_bound}
    There exist constants $c' > 1$ and $\Delta_0$ such that for all $\Delta \geq \Delta_0$ and $n \geq \Delta $ the following holds. 
    There exists a bipartite digraph $D_0 = (A \cup B, E)$ with $|A| = |B| \leq n$ and maximum degree $\Delta$  such that all its edges are directed from $A$ to $B$, as well as a tournament $R$ on $(c')^\Delta n$ vertices such that for any $A' \subseteq A$ and $B' \subseteq B$, the following hold.
    \begin{enumerate}
        \item If $|A'|, |B'| \geq 0.98|A|$ then there is no copy of $D_0[A' \cup B']$ in $R$, {and}
        \item if $|A'|, |B'| \geq 0.01|A|$ then $e_{D_0}(A', B') > 0$.
    \end{enumerate}
\end{lemma}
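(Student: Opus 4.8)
The plan is to combine the two technical lemmas \ref{lemma:guest_graph} and \ref{lemma:host_graph} directly. First I would take $H$ to be the bipartite graph on vertex classes $X,Y$ of size $n$ provided by \cref{lemma:guest_graph} (with $k = c_0^\Delta$), and let $D_0$ be the digraph obtained by orienting every edge of $H$ from $X=A$ to $Y=B$. Then $D_0$ is a bipartite digraph with $|A|=|B|=n$, maximum degree $\Delta$, and all edges directed from $A$ to $B$, as required; moreover conclusion (2) of the lemma is immediate from conclusion (2) of \cref{lemma:guest_graph}, since if $|A'|,|B'|\ge 0.01|A|=0.01n$ then $e_{D_0}(A',B')=e_H(A',B')>0$. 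For the host tournament, I would apply \cref{lemma:host_graph} with the same $k=c_0^\Delta$ and $x$ chosen proportional to $k$ (we need $x \geq (10^8 \log k)/2 = \Theta(\Delta)$, which holds for $\Delta$ large since $x \asymp c_0^\Delta$), obtaining a tournament $R$ on $k$ vertices with the stated weighted-cut property. To get a tournament on $(c')^\Delta n$ vertices, I would take $R$ itself to be the \emph{blow-up} of this $k$-vertex tournament, replacing each vertex by a transitive tournament (or independent-ish cluster) of size $t \coloneqq (c')^\Delta n / k$; the edges between clusters inherit the orientation from the base tournament. The exact value of $c'>1$ is then fixed by requiring $c' > c_1$ so that the cluster size $t = (c'/c_0)^\Delta n$ comfortably exceeds the threshold $(c_1/c_0)^\Delta n$ appearing in \cref{lemma:guest_graph}.

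The core of the argument is establishing conclusion (1): if $A'\subseteq A$, $B'\subseteq B$ with $|A'|,|B'|\ge 0.98n$, then $R$ contains no copy of $D_0[A'\cup B']$. Suppose for contradiction that $\psi$ is such an embedding into the blow-up $R$. For each base-vertex $i\in[k]$ of the underlying tournament, let $f(i)$ be the fraction of the $i$-th cluster occupied by images of $A'$ and $g(i)$ the fraction occupied by images of $B'$; since images of $A'$ and $B'$ are disjoint we have $f+g\le 1$ pointwise. The total masses $\sum_i f(i)\cdot t = |A'|$ and $\sum_i g(i)\cdot t = |B'|$ are each at least $0.98n$, so after normalizing (dividing the weight functions appropriately, or scaling $x$), $\sum_i (f(i)+g(i)) \approx 2(0.98n)/t = 2\cdot 0.98\cdot k/(c')^\Delta \cdot (1/1)$, which is the value $2x$ we should have chosen in the application of \cref{lemma:host_graph}. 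Now every edge of $D_0[A'\cup B']$ goes from an $A'$-vertex to a $B'$-vertex, and in the blow-up such an edge is only present if the corresponding base-vertices $i,j$ satisfy $ij\in E(R_{\mathrm{base}})$ (edges within a cluster cannot realize an $A'\to B'$ edge of $D_0$ since the endpoints lie in different clusters, as $A'$-images and $B'$-images in the same cluster are not joined — here I use that within a cluster we take a transitive tournament with all $A'$-images before all $B'$-images, or simply that clusters are blow-ups of a single vertex so contain no useful edges). Hence the number of edges of $D_0[A'\cup B']$ that can possibly be embedded is at most $\sum_{ij\in E(R_{\mathrm{base}})} f(i)g(j)\cdot t^2 \le 0.51 x^2 t^2$ by \cref{lemma:host_graph}.

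On the other hand I would lower-bound the number of edges of $D_0[A'\cup B']$ using \cref{lemma:guest_graph}(1). The partition of $X=A$ into the preimages $X_i \coloneqq \psi^{-1}(A'$-images in cluster $i)$, together with a "defect" set $D_X = A\setminus A'$ of size at most $0.02n$, has each $|X_i|\le f(i)t$; since each cluster has size $t=(c'/c_0)^\Delta n < (c_1/c_0)^\Delta n$ — wait, I need $|X_i|\le (c_1/c_0)^\Delta n$, which forces $c' < c_1$, contradicting $c'>c_1$ above, so I would instead \emph{choose} $c_1 > c' > 1$, i.e. pick $c'$ strictly between $1$ and $c_1$, and note the cluster count $k=c_0^\Delta$ still matches. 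Similarly partition $Y=B$ with defect $D_Y = B\setminus B'$, $|D_Y|\le 0.02n$. Then \cref{lemma:guest_graph}(1) gives $\sum_{i\ne j:\, e_H(X_i,Y_i)>0}|X_i||Y_j| > 0.55(0.98n)^2$. The key observation is that if $e_H(X_i,Y_i)>0$ then there is an edge of $D_0[A'\cup B']$ whose $A'$-endpoint lies in cluster $i$ and whose $B'$-endpoint \emph{also} lies in cluster $i$, which is impossible to embed — so actually I want the reverse: for the embedding to exist, whenever a vertex of $X_i$ and a vertex of $Y_j$ are adjacent in $H$, the base tournament must have $ij\in E(R_{\mathrm{base}})$, so the number of embeddable edges is at least $\sum_{ij\in E(R_{\mathrm{base}})}(\text{edges of }H\text{ from }X_i\text{ to }Y_j)$, and I combine this with the bound that edges within a common part $i=j$ are \emph{un}embeddable, deriving that $\sum_{i}e_H(X_i,Y_i)$ many edges are lost, while conclusion (1) of \cref{lemma:guest_graph} guarantees (via the $|X_i||Y_j|$ estimate) enough edges survive across parts to force the total surviving-edge count to exceed $0.51x^2t^2$. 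Matching the normalizations so that $0.55 > 0.51$ delivers the contradiction. \textbf{The main obstacle} I expect is precisely this bookkeeping: correctly setting up the weight functions $f,g$ so that $\sum_i(f(i)+g(i))$ hits exactly $2x$ for the value of $x$ permitted by \cref{lemma:host_graph}, reconciling the cluster-size threshold $(c_1/c_0)^\Delta n$ with the number of clusters $k = c_0^\Delta$ and the total host size $(c')^\Delta n$, and translating the $|X_i||Y_j|$-weighted count of \cref{lemma:guest_graph}(1) into an actual edge count of $D_0$ (this uses that $H$ has bounded degree, so edge counts and the pseudorandom product bound are comparable only up to constants — one must check the constants $0.55$ versus $0.51$ leave enough slack). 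Everything else is routine.
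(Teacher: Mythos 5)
Your overall architecture is the same as the paper's: take $D_0$ to be the pseudorandom bipartite graph of \cref{lemma:guest_graph} oriented from $A$ to $B$, take the host tournament to be a blow-up of the tournament from \cref{lemma:host_graph} with $k=c_0^\Delta$ clusters of size at most $(c_1/c_0)^\Delta n$, define $f(i),g(i)$ as the cluster occupation fractions of the images of $A'$ and $B'$, and use $D_X=A\setminus A'$, $D_Y=B\setminus B'$ as the defect sets. However, there are two genuine gaps. First, you never handle small $n$: \cref{lemma:guest_graph} only produces $H$ when $n\geq c_0^{2\Delta}$ (and your blow-up needs cluster size $(c'/c_0)^\Delta n\geq 1$), whereas \cref{lemma:bipartite_lower_bound} must hold for every $n\geq\Delta$, which includes $n$ polynomial in $\Delta$. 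The paper deals with this range separately, taking $D_0=\vv{K}_{\Delta,\Delta}$ and $R$ a uniformly random tournament on $2^{\lfloor 0.98\Delta\rfloor/2}$ vertices containing no copy of $\vv{K}_{\lfloor 0.98\Delta\rfloor,\lfloor 0.98\Delta\rfloor}$; this case is also what dictates the choice $c'=\min\{c_1,2^{0.3}/c_0\}$, so your prescription ``pick $c'$ strictly between $1$ and $c_1$'' is not sufficient as it stands.

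Second, and more importantly, the contradiction step is set up with the wrong quantities. Your key observation is correct: if some vertex of $X_i$ (image of $A'$ in cluster $i$) is adjacent in $D_0$ to some vertex of $Y_j$ with $i\neq j$, then $ij$ must be a forward edge of the base tournament. But the conclusion to draw from this is purely about the product masses: the set $\{(i,j):i\neq j,\ e_H(X_i,Y_j)>0\}$ is contained in $E(R_{\mathrm{base}})$, so \cref{lemma:guest_graph}(1) gives $\sum_{ij\in E(R_{\mathrm{base}})}|X_i||Y_j|>0.55(0.98n)^2$, which directly contradicts the bound $\sum_{ij\in E(R_{\mathrm{base}})}|X_i||Y_j|=t^2\sum_{ij}f(i)g(j)\leq 0.51x^2t^2\leq 0.51n^2$ from \cref{lemma:host_graph}. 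Your write-up instead tries to compare an \emph{actual edge count} of $D_0[A'\cup B']$ (``the total surviving-edge count to exceed $0.51x^2t^2$''), and you explicitly list ``translating the $|X_i||Y_j|$-weighted count into an actual edge count of $D_0$'' as the remaining bookkeeping; this cannot work, since $D_0$ has at most $\Delta n\ll n^2$ edges while $0.51x^2t^2=\Theta(n^2)$, so no such translation exists and none is needed. (Relatedly, your worry about intra-cluster edges and choosing transitive orderings inside clusters is moot: the diagonal pairs $i=j$ are simply excluded from the sum in \cref{lemma:guest_graph}(1), and the paper orients intra-cluster edges arbitrarily.) With the comparison corrected to product masses and the small-$n$ case added, your argument coincides with the paper's proof.
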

\begin{proof}
    Let $c_0, c_1$ and $\Delta_0$ be the constants from \cref{lemma:guest_graph}. By potentially increasing $\Delta_0$ further, we may also assume that $(c_0 / c_1)^{\Delta} > 10^9 \Delta \log c_0$ for all $\Delta \geq \Delta_0$.
    We let $1 < c' = \min\{ c_1, 2^{0.3}/c_0\}$.

    If $\Delta \geq \Delta_0$ and $\Delta  \leq n < \frac{1}{0.98}2c_0^\Delta$, let $D_0$ be the oriented complete bipartite graph $\vv{K}_{\Delta ,  \Delta }$ with vertex classes $A$ and $B$ and where all the edges are oriented from $A$ to $B$.
    Clearly, for any non-empty $A' \subseteq A$ and $B \subseteq B'$ we have $e_D(A', B') > 0$.
    To show that the first property also holds, note that if $|A'|, |B'| > 0.98 \Delta $ then $D_0[A' \cup B']$ has the complete bipartite graph $\vv{K}_{\lfloor0.98\Delta\rfloor, \lfloor0.98\Delta\rfloor}$ as a subgraph.
    Therefore, since the probability that a uniformly random tournament $R$ on $N = 2^{\lfloor0.98\Delta\rfloor/2} \geq (c')^\Delta n$ vertices contains a copy of $\vv{K}_{\lfloor0.49\Delta\rfloor, \lfloor0.49\Delta\rfloor}$ is at most $\binom{N}{\lfloor0.98\Delta\rfloor}\binom{N}{\lfloor0.98\Delta\rfloor} 2^{-\lfloor0.98\Delta\rfloor^2} < N^{2\lfloor0.98\Delta\rfloor} 2^{-\lfloor0.98\Delta\rfloor^2} \leq 1$, there is a choice of $R$ such that the conditions of the lemma are satisfied.

    Otherwise, we have $0.98n \geq 2c_0^\Delta$.
    Let then $D_0$ be a digraph obtained by taking the graph $H$ from Lemma \ref{lemma:guest_graph} and orienting every edge from $A$ to $B$, and let $R'$ be the tournament from Lemma \ref{lemma:host_graph}.
    We obtain a tournament $R$ by taking the blow-up of $R'$ in the following way.
    Let $N = c_1^\Delta n \geq (c')^\Delta n$, $k = c_0^\Delta$ and partition $[N] = U_1 \cup \dots \cup U_k$ such that $|U_i| = N/k$. 
    Let $R$ be an arbitrary tournament on the vertex set $[N]$ such that for all $ij \in E(R')$ we have $U_i \times U_j \subseteq E(R)$. That is, the edges between distinct $U_i,U_j$ form a blown-up copy of $R'$, and the edges inside any $U_i$ are oriented arbitrarily.
    
    $D_0$ satisfies the second condition of the lemma by \cref{lemma:guest_graph}.
    Suppose now for contradiction that for some $A' \subseteq A$ and $ B' \subseteq B$ with $|A'|, |B'| \geq 0.98n$ there is a copy of $D_0[A' \cup B']$ in $R$ and let $X$ and $Y$ be its two vertex classes.
    By Lemma \ref{lemma:guest_graph} we have that for $X_i \coloneqq X \cap U_i$ and $Y_i \coloneqq Y \cap U_i$ it holds
    \[
        \sum_{ij \in E(R')} |X_i||Y_j| \geq \sum_{i \neq j: e_{D}(X_i, X_j) > 0} |X_i||Y_j| > 0.55(0.98n)^2.
    \]
    For $i \in [k]$, let $f(i) = \frac{|X_i|k}{N}$ and $g(i) = \frac{|Y_i|k}{N}$. We have $0 \leq f+g \leq 1$ and
    \[
        2x \coloneqq \sum_i (f(i) + g(i)) = \frac k N(\ab X + \ab Y)\geq  2 \cdot 0.98 \frac{nk}{N} = 1.96\left( \frac{c_0}{c_1} \right)^{\Delta} \geq 10^9 \Delta \log c_0 > 10^8 \log k.
    \]
    Therefore, by \cref{lemma:host_graph},
    \[
        \sum_{ij \in E(R')} |X_i||Y_j| = \frac{N^2}{k^2} \sum_{ij \in E(R)}f(i)g(j) < \frac{N^2}{k^2} 0.51x^2 \leq 0.51 (0.98n)^2,
    \]
    a contradiction. This shows that there is no copy of $D_0[A' \cup B']$ in $R$ for any such $A'$ and $B'$. 
\end{proof}

\subsection{Proof of Theorem \ref{theorem:lower_bound}}\label{section:proof_lower_bound}
With the ingredients above, we are ready to prove \cref{theorem:lower_bound}. 
\begin{proof}[Proof of Theorem \ref{theorem:lower_bound}]
    Let $c'>1$ and $\Delta'_0$ be the constants from Theorem \ref{lemma:bipartite_lower_bound} and set $\Delta_0$ to be a constant such that $\Delta_0 \geq \Delta_0'$ and $(c')^{\Delta_0/2} > 4$.
    Moreover, let $c>1$ be a constant such that $c^{\Delta_0} = (c')^{\Delta_0/2}/4$.
    Let $\Delta \geq 2\Delta_0$ and $n \geq \Delta $.
    Let $D_0 = (A \cup B, E)$ and $R$ be respectively the bipartite digraph and tournament from Lemma \ref{lemma:bipartite_lower_bound}, applied with the parameters $n$ and $\Delta/2$.

    If $h=2$, then we take $G = D_0$ and since $R$ doesn't contain a copy of $D_0$ we get $\vv{r}(G) \geq (c')^{\Delta/2} n \geq c^\Delta n$.
    Otherwise, we define a graded digraph $G$ on $nh$ vertices and with a graded partition $V_1 \cup \dots \cup V_h$, where $\ab{V_i}=n$ for all $i$, by declaring that
    for all $i \in [h-1]$, the induced subgraph $G[V_i \cup V_{i+1}]$ is a copy of $D_0$ such that $V_i$ plays the role of $A$ and $V_{i+1}$ plays the role of $B$. Note that the maximum degree in $G$ is at most $\Delta$, since the maximum in-degree and maximum out-degree are both at most $\Delta/2$.

    Now let $H = \lceil\frac{h}{2}\rceil -1  > 1$ and define a tournament $T$ on $(c')^{\Delta/2} Hn$ vertices with a vertex partition $V(T) = A_1 \cup \dots \cup A_H$, where
    $|A_i| = (c')^{\Delta/2} n$ for each $i \in [H]$, as follows. For each $i$, we let
    $T[A_i]$ be a copy of $R$, and for all $1 \leq i<j\leq H$, we direct all edges from $A_i$ to $A_j$. 
    We claim that there is no copy of $G$ in $T$.
    
    Indeed, suppose for contradiction that $\phi$ is an embedding of $G$ into $T$.
    For $i \in [h]$ and $A' \subseteq V(T)$, let $f_i(A') = \frac{|\phi(V_i) \cap A'|}{|V_i|}$ be the fraction of vertices of $V_i$ embedded into $A'$. Additionally, let $U_i = \bigcup_{j=i}^H A_j$, and let $j_i$ be the largest index $1 \leq j \leq H$ such that $f_i(U_j) \geq 0.99$. Note that this is well-defined since $U_1=V(T)$, and hence $f_i(U_1)=1$. 

    The key observation is that if $f_i(U_j)\geq 0.01$, then $f_{i+1}(U_j) \geq 0.99$. Indeed, if this is not the case, then there exist $X_i \subseteq V_i, X_{i+1} \subseteq V_{i+1}$ with $\ab{X_i},\ab{X_{i+1}}\geq 0.01 n$, with the properties that $\phi(X_i) \subseteq U_j$ and $\phi(X_{i+1}) \subseteq V(T) \setminus U_j$. But all edges in $T$ are directed from $V(T) \setminus U_j$ to $U_j$, hence the second condition in \cref{lemma:bipartite_lower_bound} implies that if such $X_i,X_{i+1}$ exist, then $\phi$ is not a valid embedding.

    In particular, applying this observation with $j=j_i$, we conclude that $f_{i+1}(U_{j_i}) \geq 0.99$. This implies that $j_{i+1}\geq j_i$ for all $i \in [h-1]$, that is, that the indices $j_i$ are monotonically non-decreasing. 
    We now claim that for each $i \in [h-2]$, we have that $j_{i+2}>j_i$.

    Indeed, if $j_{i+1}>j_i$, then we are done by the monotonicity property $j_{i+2}\geq j_{i+1}$. Hence we may assume that $j_i=j_{i+1}$, which in particular implies that $f_i(U_{j_i+1}) < 0.01$ by the key observation. If $f_{i+1}(U_{j_i+1})\geq 0.01$, then we are again done by the key observation. Therefore, we may assume that $f_{i+1}(U_{j_i+1})<0.01$. Together with the fact that $j_{i+1}=j_i$, we conclude that $f_i(A_{j_i}), f_{i+1}(A_{j_i})\geq 0.98$. 

    In other words, there exist $X_i \subseteq V_i, X_{i+1} \subseteq V_{i+1}$ with $\ab{X_i},\ab{X_{i+1}} \geq 0.98n$ such that $\phi(X_i), \phi(X_{i+1}) \subseteq A_{j_i}$. But this is a contradiction to \cref{lemma:bipartite_lower_bound}, since $T[A_{j_i}]$ is a copy of $R'$. We conclude that, as claimed, $j_{i+2}> j_i$ for all $i \in [h-2]$.

    Since $j_1 \geq 1$ and $j_{i+2} \geq j_i + 1$ for all $i$, we find that $j_i \geq i/2$ for all $i$. In particular, $j_h \geq h/2 > H$. But this is a contradiction as there are only $H$ parts in $T$, implying that there is no copy of $G$ in $T$.
    Therefore, 
    \[
        \vv{r}(G) >  (c')^{\Delta/2} Hn \geq \frac 14 (c')^{\Delta/2} hn \geq c^\Delta hn.\qedhere
    \]
\end{proof}

\section{Concluding remarks}\label{section:concluding_remarks}
While \cref{theorem:easy_upper_bound} is roughly best possible in general, it is reasonable to expect that one could improve it in certain cases. In particular, for the oriented hypercube $\vv{Q_d}$, we expect that the bound in \cref{thm:hypercube} could be significantly improved.
In fact, our techniques are already sufficient show that the induced subgraph of $\vv{Q_d}$ obtained by taking the vertices with at most $d/2$ non-zero coordinates has oriented Ramsey number at most $2^{3d+o(d)}$. As this digraph consists of simply the first half of the graded partition of $\vv{Q_d}$, this suggests to us that the bound in \cref{thm:hypercube} is not particularly close to best possible.
Concretely, we make the following conjecture, which is a directed analogue of the
Burr--Erd\H{o}s conjecture \cite{MR371701} that $r(Q_d) = O(2^d)$.
\begin{conjecture}
    There is an absolute constant $C>0$ such that $\vv r(\vv{Q_d}) \leq C2^d$ for all $d \geq 1$.
\end{conjecture}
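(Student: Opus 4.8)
The plan is to push the layer-by-layer embedding behind \cref{theorem:graded_upper_bound} to its quantitative limit, using structure special to $\vv{Q_d}$. Recall that the bound there is of the shape $\sum_i 2^{O(\Delta^-_i)}|V_i|$, where $\Delta^-_i$ is the maximum in-degree between consecutive layers; for the hypercube $|V_i|=\binom di$ and $\Delta^-_i=\Theta(i)$. A first, essentially free improvement comes from also running the argument on the reverse tournament --- which swaps the roles of in- and out-degree --- using that the coordinate-complement map is an isomorphism between $\vv{Q_d}$ and its reverse digraph, together with the fact that the oriented Ramsey number is invariant under reversing all edges. This turns the expensive top layers of $\vv{Q_d}$ (weight close to $d$: tiny out-degree, but in-degree up to $d$) into cheap layers; since the last half of the graded partition of $\vv{Q_d}$ is, up to reversing all edges, a copy of the first half, and the concluding remarks note that the first half already has oriented Ramsey number at most $2^{3d+o(d)}$, combining the two halves --- which requires only modest care at the shared middle layer --- should give $\vv r(\vv{Q_d})\le 2^{3d+o(d)}$, improving the exponent $\log_2 17\approx 4.09$ from \cref{thm:hypercube} to $3$.

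The genuine task, then, is to reduce the per-layer overhead for the layers near $V_{\lfloor d/2\rfloor}$ --- where $\binom{d}{d/2}\approx 2^d/\sqrt d$ vertices are embedded at a cost of $2^{\Theta(d)}$ each --- from $2^{\Theta(i)}$ down to subexponential, ideally to $d^{O(1)}$ (which would give $2^{(1+o(1))d}$) and ultimately to $O(1)$ (which would give the conjectured $O(2^d)$). Such an overhead is provably unavoidable for arbitrary bipartite guest digraphs --- this is precisely what \cref{theorem:lower_bound}, following \cite{MR1832445}, shows --- so any proof must exploit the specific structure of $\vv{Q_d}$. One candidate feature is that, within each layer, the in-neighborhoods $\{N^-_{\vv{Q_d}}(v):v\in V_i\}$ form a \emph{linear} set system: any two of them meet in at most one vertex, because $N^-(v)\cap N^-(v')\neq\emptyset$ forces $v$ and $v'$ to differ in exactly two coordinates, in which case the intersection is the single vertex $v\wedge v'$. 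The target would be a ``guest-aware'' version of \cref{theorem:embedding_structure}, in which the structure found in the host is tailored to this near-disjoint hypergraph of in-neighborhoods rather than to all $\Delta^-$-element subsets, so that the bad events in the embedding step become nearly mutually independent and the dependent random choice step (\cref{lemma:dependent_random_choice}) and the local-lemma step (\cref{lemma:forward_step}) can be run with error parameters polynomial, rather than exponential, in $d$; the two half-embeddings would dock along $V_{\lfloor d/2\rfloor}$ because the embedding lemma already leaves the image of each layer in the ``generic'' position (almost all $\Delta^-$-tuples having large common out-neighborhoods) needed to continue.

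The hard part --- and the reason this is still only a conjecture --- is precisely converting the ``linear set system'' heuristic into a genuine reduction of the error parameters. Both \cref{lemma:dependent_random_choice} and \cref{lemma:forward_step} are first-moment arguments that are blind to which in-neighborhoods overlap, so extracting the exponential saving seems to require a new ingredient: a correlation inequality exploiting the linearity, or a random-greedy / entropy-compression embedding tuned to the hypercube, in the spirit of the techniques Tikhomirov \cite{MR4722306} used in the undirected setting. That the analogous undirected problem --- the Burr--Erd\H{o}s conjecture $r(Q_d)=O(2^d)$ --- is itself still open is further evidence that no routine modification of the present argument will suffice.
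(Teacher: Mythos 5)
The statement you are addressing is stated in the paper as an open conjecture; the paper gives no proof of it, and your submission is not one either --- as you yourself acknowledge, it is a research plan whose central ingredient is missing. The concrete gap is exactly the one you name: to get from the $\sum_i 2^{O(\Delta^-_i)}|V_i|$-type bound of \cref{theorem:graded_upper_bound} down to $O(2^d)$, you must remove the $2^{\Theta(d)}$ per-vertex overhead at the layers near weight $d/2$, and the ``linear set system'' observation about in-neighborhoods in $\vv{Q_d}$ is only a heuristic for why this might be possible. Both \cref{lemma:dependent_random_choice} and \cref{lemma:forward_step} are first-moment/union-bound arguments over \emph{all} $\Delta^-$-subsets of a host set, and no mechanism is proposed (no correlation inequality, no modified random process, no counting of the relevant restricted family of tuples) by which the near-disjointness of the guest's in-neighborhoods would actually shrink the error parameters from exponential to polynomial in $d$. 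Moreover, \cref{theorem:lower_bound} shows that an exponential-in-degree loss is unavoidable for general graded digraphs, so any such mechanism must be genuinely guest-specific; asserting that one ``should'' exist is precisely the open problem, not a proof of it.

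Even the intermediate claim of $\vv r(\vv{Q_d})\le 2^{3d+o(d)}$ is not established by what you write. The reversal symmetry of $\vv{Q_d}$ and the invariance of $\vv r$ under reversing all edges are fine, but ``docking'' the two half-embeddings at the middle layer is not a formality: the forward embedding of the bottom half places $V_{\lfloor d/2\rfloor}$ inside a structure $A_1,\dots,A_{\lfloor d/2\rfloor}$ built for that half, while the reversed embedding of the top half needs the images of those same middle-layer vertices to sit inside a separate structure (built in the reversed tournament) and to satisfy its community conditions; one would have to construct the two structures in a coordinated way around a common region of the median order and verify that a single choice of $\phi$ on $V_{\lfloor d/2\rfloor}$ is simultaneously ``generic'' for both continuations. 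None of this is carried out, so as it stands your text contains no complete proof of any bound, let alone of the conjectured $O(2^d)$.
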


% We also recall that Fox, He, and Wigderson \cite{MR4819947} proved that any bounded-height, bounded-degree acyclic digraph has linear oriented Ramsey number, and \cref{theorem:easy_upper_bound} implies that the same is true for graded digraphs even of unbounded height. It would be very interesting to extend this result, and identify other classes of unbounded-height digraphs where an analogue of the Burr--Erd\H os conjecture holds.
As mentioned in the introduction, we believe that \cref{theorem:main_theorem} gives the most general condition that is currently known for guaranteeing that a bounded-degree acyclic digraph has linear oriented Ramsey number. However, it is certainly far from being a full characterization of this property, and it would be very interesting to find additional structural notions that imply linear bounds on oriented Ramsey numbers.

Finally, we reiterate the main question left open from \cite{MR4819947}, which we consider to be of central importance in the study of oriented Ramsey numbers.
\begin{question}
    Given $\Delta \geq 1$, does there exist some $C>0$ such that every $n$-vertex acyclic digraph $D$ with maximum degree $\Delta$ satisfies $\vv r(D) \leq n^C$?
\end{question}

\paragraph{Acknowledgments:} We are grateful to Xioayu He for constructive comments on an earlier draft of this paper.
% \bibliographystyle{yuval}
% \bibliography{refs}

\appendix
\section{A more refined analysis for graded digraphs}\label{section:appendix}
In this appendix, we prove \cref{theorem:graded_upper_bound}.
As stated before, the proof follows the same lines as the proof of \cref{theorem:main_theorem}.
We first fix a suitably large host tournament $T$ and, again using dependent random choice and the median order, find disjoint sets $A_1, \dots, A_H \subseteq V(T)$ such that for each $i \in [H-1]$ almost all $\Delta_i^-$ subsets of $A_i$ have many common out-neighbors in $A_{i+1}$.
Then, using Lov\'asz local lemma we show that we can embed our graded digraph into this structure layer-by-layer.

As described above, in the case of graded digraphs, having found the sets $A_{i+1}, \dots, A_H$ just a single application of dependent random choice suffices to find the set $A_i$ --- which is formalized in the following lemma.
\begin{lemma}\label{lemma:graded_backward_step}
    Let $a, a', b, \ell, s, N, k, \Delta^- \in \mathbb{N}$ be integers with $2a' \geq a$, and let $T$ be a tournament on $N$ vertices with a median order $v_1, \dots, v_N$.
    Moreover, let $2\ell a' < o \leq N - a + 1$ and let $B \subseteq [o, o+a)$ be an arbitrary subset of size at least $b$.
    Then there exist $o - 2 \ell a' \leq o' \leq o-a'$ and $A \subseteq [o', o' + a')$ such that
    \begin{itemize}
        \item $|A| \geq \frac{a'}{2} (\frac{\ell-1}{2\ell})^k,$ {and}
        \item $(A, \emptyset)$ is a $(B, \Delta^-, s, 4(\frac{2\ell}{\ell-1})^k\binom{a'}{\Delta^-}(\frac{s}{b})^\ell)$-out-community.
    \end{itemize}
\end{lemma}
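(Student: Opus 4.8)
The plan is to follow the opening moves of the proof of \cref{theorem:embedding_structure} — using the median order of $T$ to locate a well‑placed interval — and then to finish with a single application of dependent random choice (\cref{lemma:dependent_random_choice}) in place of the more elaborate \cref{lemma:backward_step}. (We may assume $\ell\ge 2$ throughout, as otherwise the claimed error term is infinite and there is nothing to prove.)

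First I would locate the interval $I$, mirroring the median‑order argument in the proof of \cref{theorem:embedding_structure}. Set $W=[o-2\ell a',o)$, which is a legitimate interval of the median order since $2\ell a'<o\le N-a+1$. Because $2a'\ge a$, we have $B\subseteq[o,o+a)\subseteq[o,o+2a')$, so \cref{observation:median_order}, applied with block length $2a'$ and $\ell$ blocks, gives $d(W,B)\ge\frac{\ell-1}{2\ell}$. Now partition $W$ into $2\ell$ consecutive blocks of length $a'$; since these blocks have equal size, the average of $d(\cdot,B)$ over them equals $d(W,B)$, so some block $I=[o',o'+a')$ satisfies $d(I,B)\ge\frac{\ell-1}{2\ell}$. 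Its left endpoint is one of $o-a',o-2a',\dots,o-2\ell a'$, hence $o-2\ell a'\le o'\le o-a'$, exactly as required.

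Next I would extract $A$ from $I$ via dependent random choice. I would apply \cref{lemma:dependent_random_choice} to $T$ with the graph‑neighbourhood read as the out‑neighbourhood $N^+$ — so that its density hypothesis asks for many edges directed \emph{from} $L$ \emph{into} $R$, and its conclusion produces an out‑community — with $L_{\ref{lemma:dependent_random_choice}}=I$, $R_{\ref{lemma:dependent_random_choice}}=B$, ambient set $\emptyset$, $d_{\ref{lemma:dependent_random_choice}}=\frac{\ell-1}{2\ell}$ (valid by the previous paragraph), the empty family $\mathcal F=\emptyset$ (so $\delta=0$, which is smaller than $\tfrac12(\tfrac{\ell-1}{2\ell})^{k}$), $s_{\ref{lemma:dependent_random_choice}}=s$, $\Delta_{\ref{lemma:dependent_random_choice}}=\Delta^-$, and $k_{\ref{lemma:dependent_random_choice}}=k$. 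This yields a set $K\subseteq B$ with $|K|\le k$ such that, writing $A=\{u\in I:\ u\to w\text{ for every }w\in K\}$,
\[
|A|\ \ge\ \frac{1}{2}\Bigl(\tfrac{\ell-1}{2\ell}\Bigr)^{k}|I|\ =\ \frac{a'}{2}\Bigl(\tfrac{\ell-1}{2\ell}\Bigr)^{k},
\]
and $(A,\emptyset)$ is a $\bigl(B,\Delta^-,s,\ \tfrac{4}{(\frac{\ell-1}{2\ell})^{k}}\binom{|I|}{\Delta^-}(\tfrac{s}{|B|})^{k}\bigr)$-out-community. Since $|I|=a'$ and $|B|\ge b$, the error term is at most $4\bigl(\tfrac{2\ell}{\ell-1}\bigr)^{k}\binom{a'}{\Delta^-}(\tfrac{s}{b})^{\ell}$, using $|B|\ge b$ together with the size relations among the parameters (e.g.\ $\ell\le k$ and $s\le b$) that hold in the intended application. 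Combined with $A\subseteq I=[o',o'+a')$, this is precisely the conclusion of the lemma.

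The statement is thus a streamlined single‑layer analogue of \cref{lemma:backward_step}, so no genuinely new idea is needed. The step I would be most careful about is the bookkeeping: correctly tracking the in‑/out‑neighbourhood conventions when invoking the undirected \cref{lemma:dependent_random_choice} in the tournament setting, and verifying that the error term it produces, of order $(s/|B|)^{k}$, may be relaxed to the stated form of order $(s/b)^{\ell}$ using $|B|\ge b$ and the relevant inequalities between $k,\ell,s,b$.
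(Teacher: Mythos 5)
Your proposal is correct and follows essentially the same route as the paper's proof: apply the median-order observation to $[o-2\ell a',o)$, average over the $2\ell$ blocks of length $a'$ to find $I$ with $d(I,B)\ge\frac{\ell-1}{2\ell}$, and conclude with a single application of \cref{lemma:dependent_random_choice} (read with out-neighborhoods) taking $L=I$, $R=B$, $A=\mathcal{F}=\emptyset$ and parameter $k$. The one discrepancy you flag --- that dependent random choice gives an error of order $(s/|B|)^{k}$ rather than the stated $(s/b)^{\ell}$ --- is an inconsistency in the lemma's statement itself (the intended exponent is $k$, as used in its application in the proof of \cref{theorem:graded_upper_bound}), so your careful bookkeeping here is if anything more explicit than the paper's.
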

\begin{proof}
    Note that we may assume $\ab B = b$, because the conclusion for any larger value of $\ab B$ is a strictly weaker statement. 
    Let $J = [o-2\ell a', o)$ and since $B \subseteq [o, o+a)$, by \cref{observation:median_order} we have
    \[
        \sum_{u \in J} d^+(u, B) = \sum_{v\in B} d^-(v, J) \geq b(\ell-1)a'.
    \]
     Now, for $i \in [2k]$ let $I_i = [o+ (i - 2\ell - 1)a', o+(i-2\ell)a')$ and notice that $J = \bigcup_{i \in [2\ell]} I_i$.
     Therefore, by the pigeonhole principle there exists an $i$ such that
     \[
        \sum_{u \in I_i} d^+(u, B) \geq b\left(\frac{\ell-1}{2\ell}\right)a'.
     \]
    Fix such an $i$, and let $I = I_i$. Let $o'=j+(i-2\ell-1)a'$ be the left endpoint of $I$.
    Now the conclusion follows from \cref{lemma:dependent_random_choice} with $L_{\ref{lemma:dependent_random_choice}} = I$, $R_{\ref{lemma:dependent_random_choice}} = B$, $A_{\ref{lemma:dependent_random_choice}} = {\cal{F}}_{\ref{lemma:dependent_random_choice}} = \emptyset$.
\end{proof}

We are now ready to prove \cref{theorem:graded_upper_bound}.
\begin{proof}[Proof of \cref{theorem:graded_upper_bound}]
    Let $G$ be a graded digraph on $n$ vertices with a graded partition $V(G) = V_1 \cup \dots \cup V_H$ for some $H \in \mathbb{N}$ and let $\Delta^+$ and $\Delta^-$ be its maximum out- and in-degree, respectively.
    Moreover, for each $i \in [H-1]$ let $\Delta_i^-$ be the maximal in-degree in the induced subgraph $D[V_i \cup V_{i+1}]$ and set $\Delta^-_0 = \Delta^-_H = 0$. Note that we may assume that $\Delta_i^-\geq 1$ for all $i \in [h-1]$, for otherwise the underlying graph of $G$ is disconnected, and we obtain the desired bound on $\vv r(G)$ by summing up $\vv r(G')$ for every connected component $G'$ of $G$.

    We define $\varepsilon = 2/\Delta^-$ and $\ell = 4\Delta^- + 4$, and note that $\frac{2\ell}{\ell-1} \leq 2 + \varepsilon$.
    We define integers $c_s$, $c_b$, and $c_a$ by
    \[
        c_s = 32, \qquad c_b = 2000 \Delta^+ \Delta^- c_s, \qquad\text{and} \qquad c_a = 2c_b.
    \]
    We further define
    \[
        n_i = \sum_{j=i}^H \frac{(2 + \varepsilon)^{2 (\Delta_{j-1}^- + \Delta_j^-)} |V_j|}{2^{j-i}},
    \]
    and let
    \[
        a_i = c_a n_i \qquad \text{ and } \qquad b_i = c_b (2 + \varepsilon)^{-2\Delta_i^-}n_i.
    \]
    We will build sets $A_H,\dots,A_1$ such that $\ab{A_i}\geq b_i$ and that $A_i$ lies in an interval of length $a_i$. We now let
    \[
        s_i = c_s (2 + \varepsilon)^{-2(\Delta_i^- + \Delta_{i-1}^-)} n_i \qquad \text{ and } \qquad \delta_i =\frac{1}{4\Delta^- \Delta^+} \left(2^{-1/2}\frac{s_{i+1}}{b_{i+1}}\right)^{\Delta_i^-} .
    \]
    We will further guarantee that $(A_i, \emptyset)$ is an $(A_{i+1}, \Delta_i^-, s_{i+1}, \delta_i\binom{|A_{i+1}|}{\Delta_i^-})$-out-community. 
    We note for future reference that 
    \[
        n_i = (2+\varepsilon)^{2(\Delta_{i-1}^-+\Delta_i^-)}\ab{V_i} + \frac 12 n_{i+1}.
    \]
    In particular, this implies that $a_i \geq a_{i+1}/2$ and that $s_i \geq c_s \ab{V_i}= 32|V_i|$.
    
    Let $N = \sum_{i=1}^H 2 \ell a_i$, let $T$ be a tournament on $N$ vertices, and fix a median order $v_1, \dots, v_N$ of $T$.
    We now claim that we can find integers $o_1,\dots,o_H$ and disjoint sets $A_1,\dots,A_H \subseteq V(T)$ satisfying the following properties.
    \begin{itemize}
        \item $o_H = N - a_h + 1$ and $o_i - 2\ell a_{i-1} \leq  o_{i-1} \leq o_i - a_{i-1}$
        \item $A_i \subseteq [o_i, o_i + a_i)$,
        \item $|A_i| \geq b_i$, {and}
        \item for each $i \in [H-1]$, $(A_i, \emptyset)$ is an $(A_{i+1}, \Delta_{i}^-, s_{i+1}, \delta_i\binom{|A_{i}|}{\Delta_i^-})$-out-community.
    \end{itemize}
    Note that in particular, we will have that the sets $A_i$ are disjoint and that $o_1 \geq 1$.
    
    Setting $o_H = N - a_h + 1$ and $A_H = [o_H, o_H + a_H)$ clearly satisfies these properties, so suppose now that for some $i \in [H-1]$ we have defined $A_{i'}$ and $o_{i'}$ for all $i < i' \leq H$.
    By applying  \cref{lemma:graded_backward_step} with $o_{\ref{lemma:graded_backward_step}} = o_{i+1}$, $B_{\ref{lemma:graded_backward_step}} = A_{i+1}$, $a_{\ref{lemma:graded_backward_step}} = a_{i+1}, a'_{\ref{lemma:graded_backward_step}} = a_i \geq a_{i+1}/2, b = b_{i+1}, k_{\ref{lemma:graded_backward_step}} = 2\Delta^-_i, s = s_{i+1}, \ell_{\ref{lemma:graded_backward_step}}=\ell$ and $\Delta^-_{\ref{lemma:graded_backward_step}} = \Delta_i^-$ we can find $o_{i+1} - 2\ell a_i \leq  o_{i} \leq o_{i+1} - a_i$ and $A_i \subseteq [o_i, o_i + a_i)$ such that
    \begin{itemize}
        \item $|A_i| \geq \frac{a_i}{2}(2+ \varepsilon)^{-2\Delta^-_i} = b_i$, {and}
        \item $(A_i, \emptyset)$ is a $(A_{i+1}, \Delta_{i}^-, s_{i+1}, 4 \cdot (2+\varepsilon)^{2\Delta_i^-}\binom{a_i}{\Delta_i^-}(\frac{s_{i+1}}{b_{i+1}})^{2\Delta_i^-})$-out-community
    \end{itemize}
    We now note that, since $b_i \geq 2\Delta_i^-$, we have that
    \[
        \frac{\binom{a_i}{\Delta_i^-}}{\binom{b_i}{\Delta_i^-}}\leq 2^{\Delta_i^-} \left( \frac{a_i}{b_i} \right)^{\Delta_i^-} =  \left(2\cdot \frac{c_a(2+\varepsilon)^{2\Delta_i^-}}{c_b} \right)^{\Delta_i^-} = \left( 4(2+\varepsilon)^{2\Delta_i^-} \right)^{\Delta_i^-},
    \]
    where we plug in our definitions of $a_i,b_i,c_a$, and $c_b$. 
    Additionally, we have that
    \[
        \left( \frac{s_{i+1}}{b_{i+1}} \right)^{\Delta_i^-}= \left( \frac{c_s(2+\varepsilon)^{2\Delta_{i+1}^-}}{c_b(2+\varepsilon)^{2(\Delta_{i+1}^-+\Delta_{i}^-)} } \right)^{\Delta_i^-} = \left( \frac{c_s}{c_b (2+\varepsilon)^{2\Delta_{i}^-}} \right)^{\Delta_i^-} = \left(\frac{1}{2000\Delta^+\Delta^-(2+\varepsilon)^{2\Delta_i^-}}\right)^{\Delta_i^-}
    \]
    by our choices of $s_{i+1},b_{i+1}, c_s$, and $c_b$.
    Putting this all together, we see that
    \begin{align*}
        4 \cdot (2+\varepsilon)^{2\Delta_i^-}\binom{a_i}{\Delta_i^-}(\frac{s_{i+1}}{b_{i+1}})^{2\Delta_i^-}&= \left[4(2+\varepsilon)^{2\Delta_i^-} \left( \frac{s_{i+1}}{b_{i+1}} \right)^{\Delta_i^-}\right]\cdot \left[ \frac{\binom{a_i}{\Delta_i^-}}{\binom{b_i}{\Delta_i^-}}\left( \frac{s_{i+1}}{b_{i+1}} \right)^{\Delta_i^-} \right]\binom{b_i}{\Delta_i^-}\\
        &\leq \left[4(2+\varepsilon)^{2\Delta_i^-} \left( \frac{s_{i+1}}{b_{i+1}} \right)^{\Delta_i^-}\right] \left( \frac{1}{500\Delta^+\Delta^-} \right)^{\Delta_i^-}\binom{b_i}{\Delta_i^-}\\
        &\leq \left( 2^{-1/2}\frac{s_{i+1}}{b_{i+1}} \right)^{\Delta_i^-} \left( \frac{4\cdot 2^{1/2}(2+\varepsilon)^2}{500\Delta^+\Delta^-} \right)^{\Delta_i^-}\binom{b_i}{\Delta_i^-}\\
        &\leq \delta_i\binom{|A_i|}{\Delta_i^-},
    \end{align*}
    where the final step uses that $\varepsilon\leq 2$, that $\Delta_i^-\geq 1$, and our definition of $\delta_i$. This shows that the set $A_i$, as defined above, satisfies the desired properties. Continuing inductively in this way,
    we are able to find all the sets $A_1, \dots, A_H$.

    Having found these sets we now want to embed each $V_i$ into $A_i$, starting this time with $V_1$.
    For $i \in [h]$, let $S_i = \bigcup_{j =1}^i V_j$. For each $i \in [H]$ we find a function $\phi_i: S_i \to (A_1 \cup \dots \cup A_i)$ such that
    \begin{itemize}
        \item $\phi_i$ is an embedding of $G[S_i]$ into $T[A_1 \cup \dots \cup A_i]$,
        \item $\phi(V_j) \subseteq A_j$ for each $1 \leq j \leq i $, {and}
        \item for each $i \in [H-1]$ and $v \in V_{i+1}$, the vertices $\phi_i(N^-_G(v)) \subseteq A_i$ have at least $s_{i+1}$ common out-neighbors in $A_{i+1}$.
    \end{itemize}

    For all $i \in [H-1]$, we define ${\cal{F}}_i$ to be the set of all $S \subseteq A_i$ with $|N^+(S) \cap A_{i+1}| \leq s_{i+1}$ and notice that ${\cal{F}}$ is upward-closed and has $\Delta^-_i$-density at most $\delta_i$. 
    We can therefore find $\phi_1$ by applying \cref{lemma:forward_step} with $a_{\ref{lemma:forward_step}} = b_{\ref{lemma:forward_step}} = b_1$, $W_{1,\ref{lemma:forward_step}} = A_1$, $W_{2,\ref{lemma:forward_step}} = A_2$, $D_{\ref{lemma:forward_step}} = G[V_1 \cup V_2]$, ${\cal{F}}_v = {\cal{F}}_1$ for all $v \in V_2$ and $f(u) = A_1$ for all $u \in V_1$.
    Suppose now that for some $i \in [H-2]$ we have found $\phi_i$ satisfying the conditions above.
    For $u \in V_{i+1}$, let $f(u) = N_T^+(\phi_i(N_G^-(v))) \cap A_{i+1}$ and note that $|f(u)| \geq s_{i+1}$ for all $u \in V_{i+1}$.
    We can now again apply \cref{lemma:forward_step} with $W_{1,\ref{lemma:forward_step}} = A_{i+1}$, $W_{2,\ref{lemma:forward_step}} = A_{i+2}$, $a_{\ref{lemma:forward_step}} = b_{i+1}$, $b_{\ref{lemma:forward_step}} = s_{i+1}$, ${\cal{F}}_v = {\cal{F}}_{i+1}$ for all $v \in V_{i+2}$ and $D_{\ref{lemma:forward_step}} = G[V_{i+1} \cup V_{i+2}]$.
    We get a function $\phi$ such that
    \begin{itemize}
        \item for all $v \in V_{i+1}$ we have $\phi(v) \in N_T^+(\phi_i(N_G^-(v)))$, {and}
        \item for all $v \in V_{i+2}$ we have $|N_T^+(\phi(N_G^-(v)))| \geq s_{i+2}$.
    \end{itemize}
    Thus, the function
    \[
        \phi_{i+1}(v) \coloneqq \begin{cases}
            \phi(v), & v \in V_{i+1} \\
            \phi_i(v), & v \in S_i
        \end{cases}
    \]
    satisfies the conditions above.
    
    Proceeding in this way inductively, we can therefore find $\phi_{H-1}$ satisfying the same properties.
    Now, since for all $v \in V_H$ we have $|N_T^+(\phi_{H-1}(N_G^-(v)))| \geq s_h \geq |V_H|$ we can greedily extend $\phi_{H-1}$ into $\phi_h$ satisfying the above conditions. In particular, $\phi_H$ is an embedding of $G$ into $T$.
    
    To complete the proof, it remains to estimate $N$, the number of vertices in $T$.
    Plugging in our choice of $\varepsilon = 2/\Delta^-$, as well as the estimates $1+x\leq e^x$ and $\Delta_j^-\leq \Delta^-$ for all $j$, we find that
    \[
        (2+\varepsilon)^{2\Delta_{j-1}^- + 2\Delta_{j}^-} = (2(1+\varepsilon/2))^{{2\Delta_{j-1}^- + 2\Delta_{j}^-}} \leq  e^{4} 2^{2\Delta_{j-1}^- + 2\Delta_{j}^-}.
    \] 
    Additionally, we have that
    \begin{align*}
        \sum_{i=1}^H n_i &= \sum_{i=1}^H \sum_{j=i}^H \frac{(2 + \varepsilon)^{2 (\Delta_{j-1}^- + \Delta_j^-)} |V_j|}{2^{j-i}} \\
        &= 
        \sum_{j=1}^H (2 + \varepsilon)^{2 (\Delta_{j-1}^- + \Delta_j^-)} |V_j| \sum_{i=1}^j \frac 1{2^{j-i}}\\
        &\leq 2 \sum_{j=1}^H (2 + \varepsilon)^{2 (\Delta_{j-1}^- + \Delta_j^-)} |V_j|.
    \end{align*}
    Therefore,
    \[
        N = 2\ell \sum_{i=1}^H a_i = 2\ell c_a \sum_{i=1}^H n_i \leq 4\ell c_a \sum_{j=1}^H (2+\varepsilon)^{2\Delta_{j-1}^- + 2\Delta_{j}^-} |V_j| \leq c (\Delta^-)^2 \Delta^+ \sum_{j=1}^H 2^{2(\Delta_{j-1}^- + \Delta_{j}^-) } |V_j|,
    \]
    for the absolute constant $c=4\cdot 5\cdot 32 \cdot 2000 \cdot 2\cdot e^4 \leq 10^9$.
    Since $T$ was an arbitrary tournament on $N$ vertices, we have shown that 
    \[
        \vv{r}(G) \leq 10^9 (\Delta^-)^2 \Delta^+ \sum_{i=1}^H 2^{2(\Delta_{i-1}^- + \Delta_{i}^-) } |V_i|.\qedhere
    \]
\end{proof}

\section{Proofs of the technical lemmas from Section \ref{section:GRR_lemmas}}\label{appendix:GRR}
In this section, we prove the two lemmas from \cref{section:GRR_lemmas}. We begin with \cref{lemma:guest_graph}.
\begin{proof}[Proof of \cref{lemma:guest_graph}]
    We take any $c_0,c_1$ satisfying $1 < c_1^2 < c_0 < (5/4)^{1/202}$ and choose $\Delta_0$ so that $(c_1^2/c_0)^{\Delta_0} < 0.1$, $((0.8)^{1/101}c_0^2)^{\Delta_0} < 1/16$ and $(1 - 10^4)^{\Delta_0/101} < 1/8$. Note that we can choose such a $\Delta_0$ since all three of these inequalities are satisfied for sufficiently large $\Delta_0$. Let moreover $\Delta \geq \Delta_0$, $d = \Delta/101$, and $m = 1.01n$.

    To obtain our graph $H$, we will first draw a bipartite graph $G$ uniformly at random from the set of all bipartite graphs with $dm$ edges and with vertex classes $V'$ and $V''$ of size $m$ each. 
    Then we will remove the $n/100$ largest degree vertices on each side to obtain $H$.
    Since the number of vertices of degree larger than $\Delta$ in $D$ is at most $\frac{dm}{\Delta + 1} < \frac{m}{101}=\frac n{100}$, the maximum degree of $H$ is at most $\Delta$ with probability $1$.
    It thus suffices to show that $H$ will also satisfy the other two properties with positive probability.

    For the first one, let us bound the probability that there exist partitions $V' = V_1' \cup \dots \cup V_k' \cup D_X \cup D'$ and $V'' = V_1'' \cup \dots \cup V_k'' \cup D_Y \cup D''$ with $|D'| = |D''| = n/100$,  $|D_X|, |D_Y| \leq n/50$, and $|V_i'|, |V_i''| \leq (c_1/c_0)^\Delta n$ for all $i \in [k]$, such that
    \[
        \sum_{i \neq j: e_H(V_i',V_j'') > 0} |V_i'||V_j''| \leq 0.55(0.98n)^2.
    \]
    To do that, notice that since 
    \[
        \sum_{i=1}^k |V_i'||V_i''| \leq k \left(\left(\frac{c_1}{c_0}\right)^{\Delta}n\right)^2 = \left(\frac{c_1^2}{c_0}\right)^\Delta n^2 \leq \left(\frac{c_1}  {c_0}\right)^{\Delta_0} n^2 < 0.1n^2,
    \]
    such a partition must satisfy
    \[
        \sum_{i \neq j: e_G(V_i', V_j'') = 0} |V_i'||V_j''| \geq (0.98n)^2 - 0.1n^2 - 0.55(0.98n)^2 \geq 0.2m^2.
    \]
    Therefore, by the union bound, the probability that such a partition exists is at most
    \[
        (k+2)^{2m} 2^{k^2} \frac{\binom{0.8m^2}{dm}}{\binom{m^2}{dm}} < (2k)^{2m} 2^{k^2} (0.8)^{dm} < 8^m ((0.8)^{1/101}c_0^2)^{\Delta_0 m} < \frac 12,
    \]
    where $(k+2)^{2m}$ is a bound on the number of partitions, $2^{k^2}$ bounds the number of possible choices of pairs $(V_i', V_j'')$ with no edges in between them and ${\binom{0.8m^2}{dm}}/{\binom{m^2}{dm}}$ is a bound on the probability that indeed no edges fall between them.

    Similarly, the probability that there exist $X' \subseteq X$ and $Y' \subseteq Y$ of sizes at least $0.01n$ each such that $e_H(X', Y') = 0$ is at most
    \[
        2^{2m} \frac{\binom{(1 - 10^4)m^2}{dm}}{\binom{m^2}{dm}} < 2^{2m} (1 - 10^4)^{dm} \leq 2^{2m} (1 - 10^4)^{\Delta_0 m / 101} < \frac 12,
    \]
    where $2^{2m}$ bounds the number of choices of $X'$ and $Y'$ and the fraction bounds the probability that there are no edges between them.
    Thus, by the union bound, there exists a choice of $G$ such that both properties are satisfied, implying the existence of the desired $H$. 
\end{proof}
We now turn to the proof of \cref{lemma:host_graph}.
\begin{proof}[Proof of \cref{lemma:host_graph}]
    Note that $2x = \sum_{i=1}^k (f(i)+g(i)) \leq k$, hence the statement is vacuous if $2x>k$, as in this case there exist no such functions $f,g$. Thus, we assume henceforth that $2x \leq k$, and in particular that $k > 10^8 \log 2$. We let $R$ be a uniformly random tournament with vertex set $[k]$.

    We first claim that we can assume that there exist $i_0 \neq j_0$ such that for all $i \neq i_0$ and all $j \neq j_0$ we have
    \[
        f(i), g(j) \in \{0,1\}.
    \]
    Indeed, for any fixed outcome of $R$, suppose that $f$ and $g$ maximize $W$ and that there exists an $i$ such that $0 < f(i), g(i) < 1$.
    Now consider the sums $W_f(i) \coloneqq \sum_{j:ij \in E(R)} g(j)$ and $W_g(i) \coloneqq \sum_{j: ji \in E(R)} f(j)$.
    If $W_f(i) \geq W_g(i)$ then define new functions $f'$ and $g'$ such that there are equal to $f$ and $g$ except that $f'(i) = f(i) + g(i)$ and $g'(i) = 0$.
    Otherwise, we set $f'(i) = 0$ and $g'(i) = f(i) + g(i)$.
    In either case, we have $W' \geq W$ for the corresponding quantity $W'$.

    Thus, we can assume that $\min \{ f(i), g(i)  \} = 0$ for all $i$. 
    Now suppose that there exist $i \neq j$ such that $0 < g(i), g(j) < 1$.
    Again in case $W_g(i) \geq W_g(j)$, we let $\varepsilon_{ij} = \min \{ g(j), 1 - g(i) \}$ and let $g'(i) = g(i) + \varepsilon_{ij}$ and $g'(j) = g(j) - \varepsilon_{ij}$.
    Otherwise, we do the same with $i$ and $j$ swapped and in both cases we get $W' \geq W$.
    By the same argument, we can also assume that for at most one $i_0$ we have $0 < f(i_0) < 1$.

    Now, assuming $f$ and $g$ satisfy the property above, define $T = \{i: f(i) = 1\}$ and $S = \{ j: g(j) = 1\}$ and let $t = |T|, s = |S|$.
    By our assumptions, we have that $t+s \leq 2x < t + s + 2$ and $2x \leq k$.
    Additionally, we have
    \[
    W = \sum_{ij \in E(R)} f(i)g(j) = e_R(T, S) + g(j_0)W_g(j_0) + f(i_0)W_f(i_0) \leq e_R(T, S) + 2x.
    \]
    In particular, if $W > 0.51x^2$ we find that $$e_R(T,S) > 0.51x^2 - 2x \geq 0.501x^2 \geq 0.501\frac{(s+t)^2}{4}.$$
    We now claim that with positive probability (over the randomness in $R$), there exist no sets $S,T \subseteq V(R)$ satisfying this inequality.
    Suppose first that $t \leq s$.
    Note that we must have $t > s/7$ since otherwise $e_R(T, S)  \leq ts < 0.5(t+s)^2 / 4$.
    Similarly, we must have $s > s_0 \coloneqq 2 \cdot 10^7 \log k$.
    For any fixed disjoint $T,S$ we have that $e_R(T, S) \sim \operatorname{Bin}(ts, \frac 12)$ and thus by Chernoff's inequality, and using $(t+s)^2/4 \geq ts \geq s^2/7$, we get
    \[
        \Pr[e_R(T, S) > 0.501 (t+s)^2 /4] \leq \Pr[e_R(T, S) > 0.501ts] < e^{-10^{-7}s^2}.
    \]
    Moreover, for $s>s_0$, we have
    \[
        e^{-10^{-7}s^2} < e^{-10^{-7}s\cdot s_0}=k^{-2s}.
    \]
    Therefore, the probability that such $T$ and $S$ exist is at most
    \[
        \sum_{s=s_0}^{k} \sum_{t=s/7}^{s} \binom{k}{s}\binom{k}{t}e^{-10^{-7}s^2} \leq \sum_{s=s_0}^{k} \sum_{t=s/7}^{s} \left(\frac{ek}s\right)^{2s}k^{-2s} \leq k \sum_{s=s_0}^{k}\left(\frac{e^2}{s^2}\right)^s < k^2\left(\frac{e^2}{s_0^2}\right)^{s_0} < \frac 12,
    \]
    where in the first inequality we use that since $t \leq s$, we have that $\binom{k}{s} \binom{k}{t} \leq \binom{k}{s}^2 < (ek/s)^{2s}$. By interchanging the roles of $s$ and $t$, we obtain the same bound in case $t \geq s$. Thus,  we find that $R$ satisfies the desired property with positive probability.
\end{proof}

\end{document}